\documentclass[12pt,a4paper]{article}
\usepackage{amsfonts}
\usepackage{amsmath}
\usepackage{amssymb}
\usepackage{fullpage}
\usepackage{amsthm}
\usepackage{graphicx}
\usepackage{tikz}
\usepackage{tikz-cd}

\newtheorem{theorem}{Theorem}[section]
\newtheorem{proposition}[theorem]{Proposition}
\newtheorem{lemma}[theorem]{Lemma} 
\newtheorem{corollary}[theorem]{Corollary}
\newtheorem{conjecture}[theorem]{Conjecture}
\newtheorem{remark}[theorem]{Remark}

\newtheorem{definition}[theorem]{Definition}

\newcommand{\bmu}{{\underline{\mu}}}

\newcommand{\nn}{{\mathbb N}}

\newcommand{\ff}{{\mathbb F}}
\newcommand{\zz}{{\mathbb Z}}
\newcommand{\rr}{{\mathbb R}}

\newcommand{\sss}{{\mathbb S}}
\newcommand{\ttt}{{\mathbb T}}
\newcommand{\bg}{{\mathbf{g}}}

\newcommand{\calr}{{\mathcal{R}}}

\newcommand{\semi}{{\rtimes}}
\newcommand{\link}{{\mathrm{Lk}}}
\newcommand{\Star}{{\mathrm{St}}}

\title{On the virtual and residual properties of a generalization
  of Bestvina-Brady groups} 
\author{Ian J. Leary
\and
Vladimir Vankov}

\date{\today}

\begin{document}

\maketitle

\begin{abstract}

  Previously one of us introduced a family of groups $G^M_L(S)$,
  parametrized by a finite flag complex $L$, a regular covering $M$ of
  $L$, and a set $S$ of integers.  We give conjectural descriptions of
  when $G^M_L(S)$ is either residually finite or virtually
  torsion-free.  In the case that $M$ is a finite cover and $S$ is
  periodic, there is an extension with kernel $G_L^M(S)$ and infinite
  cyclic quotient that is a CAT(0) cubical group.  We conjecture that
  this group is virtually special.  We relate these three 
  conjectures to each other and prove many cases of them.  
\end{abstract}

\section{Introduction} 

Bestvina-Brady groups are a family of infinite discrete groups
that were constructed in the 1990s to answer a long standing open
question in homological group theory~\cite{bb}; the existence
of non-finitely presented groups of type $FP$.  In~\cite{ufp},
one of us generalized the Bestvina-Brady construction, producing 
an uncountable family of groups of type $FP$.  Further results
concerning these groups can be found in~\cite{kls,bks}.  Our aim
is to study some of the other, non-homological, properties of these
groups.

It is well-known that Bestvina-Brady groups are torsion-free,
residually finite and linear over~$\zz$.  We address the question
of when the groups introduced in~\cite{ufp} have these and other
related properties.  In order to state our results, we first need
to say a little about the construction of the groups.  

Bestvina-Brady groups are parametrized by a finite flag simplicial
complex, and we denote by $BB_L$ the group corresponding to the
complex $L$.  The map $L\mapsto BB_L$ can be viewed as a functor from
the category of non-empty flag complexes and simplicial maps to
the category of groups.

The groups $G_L^M(S)$ introduced in~\cite{ufp} are parametrized by a
finite connected flag simplicial complex $L$, together with a
connected regular (possibly infinite) covering $M\rightarrow L$ of $L$
and a set $S\subseteq \zz$.  For the applications to homological group
theory the main case of interest is when $M$ is the universal covering,
so~\cite{ufp} focussed mainly on that case, but see the discussion in
\cite[section~21]{ufp} for the general case.  The group of deck
transformations of the regular covering $M\rightarrow L$ plays a
major role in describing $G_L^M(S)$, so we introduce the notation
$\pi(M,L)$ for this group.  Of course, a choice of basepoints
identifies $\pi(M,L)$ with the factor group 
$\pi_1(L)/\pi_1(M)$ of the two fundamental groups.

For fixed $L$ and $M$, the groups $G_L^M(S)$ interpolate between two
groups that are easily described in terms of Bestvina-Brady groups:
$G_L^M(\zz)$ is $BB_L$ and $G_L^M(\emptyset)$ is the semidirect
product $BB_M\semi \pi(M,L)$, where the action of $\pi(M,L)$ on $M$ is
used to define its conjugation action on $BB_M$.  (Usually
Bestvina-Brady groups are defined only for finite complexes, because
this is the case in which the homological finiteness properties of
$BB_L$ are controlled by the homology of~$L$, but the definition makes
sense for arbitrary flag complexes such as~$M$.)

The case $S=\zz$ is in some ways an exception, as will
become apparent in the statement of some of our results.  This is
because for every other $S$, $G_L^M(S)$ contains subgroups isomorphic
to $\pi(M,L)$.  Another exceptional case is when $M$ is the trivial
covering of $L$, or equivalently $\pi(M,L)=\{1\}$; in this case
$G_L^L(S)=BB_L$ is independent of $S$.

It is not hard to decide which of the
groups $G_L^M(S)$ are torsion-free.

\begin{proposition}\label{prop:torfree}
  The group $G_L^M(S)$ is torsion-free if and only if either
  $S=\zz$ or $\pi(M,L)$ is torsion-free.
\end{proposition}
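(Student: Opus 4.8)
The plan is to establish the two implications separately, treating $S=\zz$ on its own.

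Suppose first that $G_L^M(S)$ is torsion-free and $S\neq\zz$. As recalled in the introduction, and proved in~\cite{ufp}, in this case $G_L^M(S)$ contains a subgroup isomorphic to $\pi(M,L)$; a subgroup of a torsion-free group is torsion-free, so $\pi(M,L)$ is torsion-free. This is the ``only if'' direction. For the ``if'' direction, the case $S=\zz$ is immediate: then $G_L^M(S)=BB_L$ is a subgroup of the right-angled Artin group $A_L$, which is torsion-free, as recalled above. So it remains to prove that $G_L^M(S)$ is torsion-free whenever $\pi(M,L)$ is torsion-free.

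For this I would use the geometric model underlying~\cite{ufp}: $G_L^M(S)$ acts by cellular automorphisms on a contractible, finite-dimensional complex $\widetilde{X}$ --- a branched covering of the CAT(0) cube complex on which $A_L$ acts --- in such a way that the stabilizer of every cell is isomorphic to a subgroup of $\pi(M,L)$, namely one of the ``branching groups'' built into the construction (when $S=\zz$ there is no branching and the action is free). Passing to a subdivision, we may assume the action has no inversions, so that any subgroup fixing a point of $\widetilde{X}$ lies in a cell stabilizer. Now let $F\leq G_L^M(S)$ be a finite subgroup and suppose $F\neq\{1\}$; choose $P\leq F$ of prime order $p$. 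Since $\widetilde{X}$ is contractible it is $\zz/p$-acyclic, and it is finite-dimensional, so by Smith theory the fixed set $\widetilde{X}^{P}$ is non-empty. Hence $P$ embeds in a cell stabilizer, and therefore in $\pi(M,L)$; as $|P|=p$ and $\pi(M,L)$ is torsion-free, this is absurd. So $G_L^M(S)$ has no non-trivial finite subgroup.

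The substantive input is the structural description of $\widetilde{X}$ provided by~\cite{ufp}: that it is finite-dimensional (clear, its dimension being at most $\dim L+1$), that its cell stabilizers genuinely embed in $\pi(M,L)$ rather than being mere subquotients, and that it is contractible --- equivalently here, $\zz/p$-acyclic for every prime $p$ --- for any finite connected flag complex $L$, with only the finiteness properties of $G_L^M(S)$, not its torsion-freeness, depending on the homotopy type of $L$. I expect pinning down (or precisely citing) these facts to be the main point; granted them, the fixed-point argument is routine. When a CAT(0) model for the action is available --- for instance through the CAT(0) cubical extension mentioned in the abstract, valid when $M$ is finite and $S$ is periodic --- the appeal to Smith theory may be replaced by the Cartan fixed-point theorem for finite groups acting on complete CAT(0) spaces.
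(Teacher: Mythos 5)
Your proof is correct, and its skeleton---finite subgroups must fix a point of the complex, point stabilizers are trivial or isomorphic to $\pi(M,L)$, and the converse follows because for $S\neq\zz$ the group contains copies of $\pi(M,L)$---is exactly the paper's. The only real difference is the fixed-point mechanism. The paper observes that $X_L^M(S)$ is CAT(0) for \emph{every} $S$ and every regular cover $M$: the quotient $X_M/N(S)$ is locally CAT(0) (its vertex links are $\sss(L)$ or $\sss(M)$, both flag) and is simply connected because $N(S)$ is generated by vertex stabilizers, so Gromov's criterion applies. Hence a finite-order element fixes a point by the CAT(0) fixed-point theorem, and the proof is one line. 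Your hedge---that the CAT(0) structure might only be available when $M$ is finite and $S$ is periodic, forcing a detour through Smith theory on a contractible finite-dimensional complex---is unnecessary: the finiteness/periodicity hypotheses are only needed for properness and cocompactness of the larger group $G_L^M(S)\semi n\zz$, not for the CAT(0) property of $X_L^M(S)$ itself. Your Smith-theoretic argument does go through (the complex is finite-dimensional and contractible, the non-free cell stabilizers are precisely the vertex stabilizers isomorphic to $\pi(M,L)$, and the subdivision trick handles inversions), and it has the mild virtue of needing only mod-$p$ acyclicity rather than a metric of non-positive curvature; but it is heavier machinery than the situation requires, and the structural facts you defer to~\cite{ufp} are most cleanly delivered exactly by the CAT(0) description you were reluctant to assume.
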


We give necessary conditions for $G_L^M(S)$ to be virtually
torsion-free and to be residually finite.  In the statement,
a subset $S$ of $\zz$ is said to be periodic if there exists
$n>0$ so that $S+n=S$, and the least such $n$ is called the
period of the set $S$.

\begin{theorem}\label{thm:properties}
  If $G_L^M(S)$ is virtually torsion-free then at least
  one of the following holds: 
      \begin{itemize}
      \item{}
        $S=\zz$; 
      \item{}
        $\pi(M,L)$ is torsion-free;
      \item{}
        $\pi(M,L)$ is virtually torsion-free and $S$ is periodic.
      \end{itemize}
      If $G_L^M(S)$ is residually finite then at least one of
      the following holds:
      \begin{itemize}
      \item{}
        $S=\zz$;
      \item{}
        $\pi(M,L)=\{1\}$; 
      \item{}
        $\pi(M,L)$ is residually finite and $S$ is closed
        in the profinite topology on $\zz$.    
      \end{itemize}
\end{theorem}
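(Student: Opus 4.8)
The plan is to prove both implications by the same three-stage scheme: dispose of the case $S=\zz$, read off the condition on $\pi(M,L)$ from the copies of $\pi(M,L)$ sitting inside $G:=G_L^M(S)$, and then do the real work, which is to extract the condition on $S$.

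If $S=\zz$ there is nothing to prove, so assume $S\neq\zz$; then, as recalled in the introduction, $G$ contains a subgroup isomorphic to $\pi(M,L)$. Being virtually torsion-free passes to subgroups (a finite-index torsion-free subgroup of $G$ meets $\pi(M,L)$ in a finite-index torsion-free subgroup of $\pi(M,L)$), so the first hypothesis forces $\pi(M,L)$ to be virtually torsion-free; if $\pi(M,L)$ is in fact torsion-free we are in the second bullet, so we may assume $\pi(M,L)$ has torsion and it remains only to prove $S$ is periodic. Similarly residual finiteness passes to subgroups, so under the second hypothesis $\pi(M,L)$ is residually finite; if $\pi(M,L)=\{1\}$ we are in the second bullet, so we may assume $\pi(M,L)\neq\{1\}$ and it remains to prove $S$ is closed in the profinite topology on $\zz$.

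For this substantive step I would work with the geometric model behind \cite{ufp}: $G$ acts on a space $\widehat X$ obtained from the $\mathrm{CAT}(0)$ cube complex on which $BB_L$ acts (via the Bestvina--Brady Morse function) by performing a branched covering over the integer levels lying \emph{outside} $S$, with local model the covering $M\to L$; for $S=\zz$ this does nothing and recovers $BB_L$, while for $S=\emptyset$ it recovers $BB_M\semi\pi(M,L)$. Two structural features drive the argument. First, the Morse function descends to a $G$-invariant height function on $\widehat X$, so $G$ preserves heights; consequently the finite (respectively, non-trivial) subgroups coming from $\pi(M,L)$ that sit over branch data at distinct heights are never conjugate in $G$, and the ``branch pattern'' $\zz\setminus S$ is intrinsic to $G$. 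Second, a height-raising generator of the ambient right-angled Artin group need not lie in $G$, but its $k$-th power extends over $\widehat X$, hence conjugates height-$j$ branch data to height-$(j+k)$ branch data, exactly when $S+k=S$; failing that, a finite quotient of $G$ can only ``merge'' the branch data at heights $j$ and $j+k$ to the extent that the branch patterns near $j$ and near $j+k$ agree modulo the order of the quotient. Granting this, one argues: given a finite quotient $\phi\colon G\to F$ in which a chosen torsion element over a branch vertex of height $m\notin S$ survives, the second feature should force $(m+N\zz)\cap S=\emptyset$ for a suitable $N=N(|F|)$; letting $F$ range over all finite quotients and invoking residual finiteness of $G$ then shows every $m\notin S$ has a profinite neighbourhood $m+n\zz$ disjoint from $S$, which is closedness of $S$. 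In the virtually-torsion-free case a single finite-index subgroup must avoid the torsion at every branch height simultaneously, and the same comparison rigidifies the pattern all the way to genuine periodicity $S+n=S$.

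The main obstacle is making the second feature precise: showing how a finite quotient of $G$ detects the pattern $\zz\setminus S$ along an arithmetic progression, i.e.\ producing, from a surviving torsion element at height $m$, the element that must die in $F$ were $S$ to meet $m+n\zz$. I expect this to need the complex-of-groups decomposition of $G$ in the height direction --- vertex groups built from Bestvina--Brady level sets, edge groups twisted by the $\pi(M,L)$-branching --- together with a Britton's-lemma-style normal form inside it, in the spirit of the commensurated-subgroup obstructions used to build non-residually-finite $\mathrm{CAT}(0)$ groups; and, for the virtually-torsion-free statement, the parallel bookkeeping showing that no finite-index subgroup can clear the resulting torsion unless $\zz\setminus S$ is periodic.
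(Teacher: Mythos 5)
Your first two stages match the paper exactly (dispose of $S=\zz$, use the copies of $\pi(M,L)$ inside $G_L^M(S)$ and the fact that virtual torsion-freeness and residual finiteness pass to subgroups), but the substantive third step --- extracting periodicity, respectively closedness, of $S$ --- is not actually proved: you yourself flag ``the main obstacle is making the second feature precise,'' and the machinery you propose to fill it (a complex-of-groups decomposition in the height direction, Britton's-lemma normal forms, commensurated-subgroup obstructions) is neither carried out nor needed. In particular your ``second feature,'' that a finite quotient can only merge branch data at heights $j$ and $j+k$ when the branch patterns agree modulo the order of the quotient, is exactly the assertion that has to be established, and your sketch gives no mechanism for it.

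The paper's mechanism is elementary and purely algebraic, via the invariant $\calr(G,\bg)=\{n:g_1^n\cdots g_l^n=1\}$. Fix a directed edge loop $(a_1,\ldots,a_l)$ in $L$ that does not lift to a loop in $M$, and set $w_n=a_1^n\cdots a_l^n\in G_L^M(S)$; by \cite[lemma~15.3]{ufp} one has $w_n=1$ if and only if $n\in S$, i.e.\ $S=\calr(G_L^M(S),(a_1,\ldots,a_l))$. The key observation, which replaces your entire branch-data analysis, is that in any group of finite exponent $m$ the word $g_1^n\cdots g_l^n$ depends only on $n$ bmod $m$, so $\calr$ of a finite group is periodic, and $\calr$ of a residually finite group is an intersection of such periodic sets, hence profinitely closed; applied to $G_L^M(S)$ this gives the residual-finiteness half at once. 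For the virtually torsion-free half one needs one more idea that is absent from your proposal: choose the loop to represent a nontrivial \emph{torsion} element of $\pi(M,L)$ (possible since you have already reduced to the case where $\pi(M,L)$ has torsion), so that every $w_n$ is a torsion element of $G_L^M(S)$. Then for any homomorphism $f$ to a finite group $Q$ with torsion-free kernel, $w_n=1$ if and only if $f(w_n)=1$, whence $S=\calr(Q,f(\bba))$ is periodic by the finite-exponent observation. Without identifying the test words $w_n$, the exact equality $\{n:w_n=1\}=S$, and (for the torsion case) the choice of loop representing a torsion deck transformation, your argument does not close; with them, none of the heavy geometric apparatus you invoke is required.
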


It seems plausible that these necessary conditions may also be
sufficient, and so we make the following conjectures.  

\begin{conjecture}\label{conj:vtorfree}
  If $S$ is periodic and $\pi(M,L)$ is virtually torsion-free then
  $G_L^M(S)$ is virtually torsion-free.
\end{conjecture}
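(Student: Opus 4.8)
The plan is to exhibit a torsion-free subgroup of finite index in $G_L^M(S)$. Two cases are immediate from Proposition~\ref{prop:torfree}: if $S=\zz$, or if $\pi(M,L)$ is torsion-free, then $G_L^M(S)$ is already torsion-free and there is nothing to prove. So assume $S\neq\zz$ and that $Q:=\pi(M,L)$ contains an element of finite order; since $Q$ is virtually torsion-free we may fix a torsion-free normal subgroup $K\trianglelefteq Q$ of finite index, and we write $n$ for the period of $S$. Recall that, because $S\neq\zz$, the group $G_L^M(S)$ contains a subgroup $Q_0$ isomorphic to $Q$.

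The heart of the argument is to construct a homomorphism $\rho\colon G_L^M(S)\to Q$ whose restriction to $Q_0$ is an isomorphism onto $Q$. Granting such a $\rho$, I would finish as follows: the composite $G_L^M(S)\xrightarrow{\rho}Q\twoheadrightarrow Q/K$ has finite image, so its kernel $N$ has finite index, and it remains to see that $N$ is torsion-free. Every nontrivial torsion element $g$ of $G_L^M(S)$ is conjugate into $Q_0$; this follows from the description in~\cite{ufp} of $G_L^M(S)$ as acting on a CAT(0) cube complex whose cell stabilisers are conjugate into $Q_0$, since a finite group acting cellularly on a CAT(0) cube complex lies in the stabiliser of a cube. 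Writing $g=hq_0h^{-1}$ with $1\neq q_0\in Q_0$ and using that $\rho$ is injective on $Q_0$, the image of $g$ in $Q/K$ is conjugate to the image of $q_0$, which is nontrivial because $q_0$ has finite order and $K$ is torsion-free. Hence $g\notin N$, so $N$ is torsion-free and we are done.

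The hard part is the construction of $\rho$, and this is the step that genuinely uses periodicity of $S$: by Theorem~\ref{thm:properties} a hypothesis of this kind is unavoidable, and for non-periodic $S$ there is no homomorphism $G_L^M(S)\to Q$ that is injective on $Q_0$. One route is to read $\rho$ off a presentation of $G_L^M(S)$ as in~\cite{ufp}, whose defining relations split into those recording the deck group $Q$, commutation relations coming from the edges of $M$, and relations encoding $S$ in the ``Morse direction'' of the map $A_L\to\zz$; one sends the $Q$-generators to $Q$ by the identity and all other generators to $1$, and the only relations left to verify are the $S$-relations, which occur in $n$-periodic families precisely because $S+n=S$ and so survive collapsing the Morse direction. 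A second route, available when $M$ is finite, is through the extension $1\to G_L^M(S)\to\Gamma\to\zz\to1$ of~\cite{ufp}: restricting to the index-$n$ subgroup $\Gamma^{(n)}$ lying over $n\zz\le\zz$ turns the branched-covering data that builds $\Gamma$ from $A_L$ and $M$ into a genuine action of $Q$, so that $\Gamma^{(n)}$ is an extension of a torsion-free group by $Q$, and $\rho$ is the restriction to $G_L^M(S)\le\Gamma^{(n)}$ of the projection $\Gamma^{(n)}\to Q$; from this point of view the conclusion would also follow from virtual specialness of $\Gamma$, as special groups are virtually torsion-free. I expect that checking $\rho$ is well defined and restricts to an isomorphism on $Q_0$, and that the structural facts quoted from~\cite{ufp} are available in the required generality (in particular for infinite $M$), will be the only substantial points; the rest is routine, using Proposition~\ref{prop:torfree} for the reductions above.
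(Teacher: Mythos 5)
The statement you are proving is one of the paper's \emph{conjectures}: the paper itself does not prove it in general, but only establishes special cases (Theorem~\ref{thm:vtorfree}, Corollary~\ref{cor:vtorfree}, Proposition~\ref{prop:homology}) and a reduction to finite covers (Theorem~\ref{thm:reduction}), and it explicitly flags $L=\rr P^2$ with $M$ the universal cover as an open test case. So your argument would have to contain a genuinely new idea, and the place where it breaks is exactly the step you call the hard part: the construction of $\rho\colon G_L^M(S)\to Q=\pi(M,L)$ injective on a vertex stabilizer. Your first route does not work. Normalizing so that $0\in S$, the group $G_L^M(S)$ is generated by the directed edges of $L$ alone, and the surviving copies of $\pi(M,L)$ are the subgroups whose elements have the form $a_1^m\cdots a_l^m$ with $m\notin S$ and $(a_1,\ldots,a_l)$ a loop not lifting to $M$; these are words in the edge generators, so the map sending ``all other generators to $1$'' kills every such subgroup rather than restricting to an isomorphism on it. Equivalently, the retraction $BB_M\semi\pi\to\pi$ does not factor through $G_L^M(S)$: the normal subgroup $N(S)$ contains the elements $\gamma_g[s]g$ for $s\in S$, $g\in\pi$, whose images already exhaust $\pi$, and periodicity of $S$ plays no role in this obstruction. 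Your second route is circular: the assertion that the preimage of $n\zz$ in the extension is (torsion-free)-by-$Q$ is essentially the conclusion you want, and appealing to virtual specialness of that extension is Conjecture~\ref{conj:special}, also open. The paper's own partial results show why no map to $Q$ itself should be expected in general: where it can prove the conjecture it uses much larger finite targets (products of wreath products $S_N\wr C_n$, via Ore's commutator theorem) and needs $L$ to map to a graph with well-subdivided edges so that each relator has enough ``room'' to be labelled; for complexes such as flag triangulations of $\rr P^2$ the triangle relations force commutation among adjacent edge images and the method fails.

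There is also a secondary gap in your reduction. Torsion elements of $G_L^M(S)$ are conjugate into vertex stabilizers, but these form infinitely many conjugacy classes, one for each height $m\notin S$ (the action preserves the height function, so the subgroups $\pi(m)$ for distinct $m\notin S$ are never conjugate). Hence injectivity of $\rho$ on a single copy $Q_0$ would not suffice even if $\rho$ existed: you would need a finite quotient in which \emph{every} element $a_1^m\cdots a_l^m$, for every non-lifting loop and every $m\notin S$, survives, while the corresponding elements with $m\in S$ die. Producing a finite group with elements whose power-product vanishing pattern realizes a prescribed periodic set $S$ is precisely the content of the paper's Proposition~\ref{prop:hom}, and making it compatible with all the relations of $G_L^M(S)$ is where the real difficulty, and the remaining open problem, lies.
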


\begin{conjecture}\label{conj:resfin}   
  If $S$ is closed in the profinite topology on $\zz$ and $\pi(M,L)$ is
  residually finite then $G_L^M(S)$ is residually finite.
\end{conjecture}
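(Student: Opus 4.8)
Since this is one of the paper's conjectures I would not expect to settle it outright; the realistic target is to reduce it to a single hardest case and to the paper's virtual specialness conjecture, thereby establishing it whenever current CAT(0) cubical technology suffices. First dispose of the two degenerate alternatives: if $S=\zz$ then $G_L^M(S)=BB_L$, which is linear over $\zz$ and hence residually finite, and if $\pi(M,L)=\{1\}$ then again $G_L^M(S)=BB_L$. So the content lies in the third alternative, and the task is: assuming $S$ is closed in the profinite topology on $\zz$ and $\pi(M,L)$ is residually finite, separate each nontrivial element of $G_L^M(S)$ in a finite quotient.

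The plan is to run two approximation reductions. For the covering: every finite-index normal $Q\trianglelefteq\pi(M,L)$ determines an intermediate covering $M\to N\to L$ with $\pi(N,L)\cong\pi(M,L)/Q$ finite, and identifying $Q$-orbits of sheets should yield a surjection $G_L^M(S)\twoheadrightarrow G_L^N(S)$; since $\pi(M,L)$ is residually finite one can take such $Q$ with $\bigcap Q=\{1\}$, and --- granting that the induced kernels in $G_L^M(S)$ then intersect trivially --- the elementary fact that a group surjecting onto a family of residually finite groups with trivially intersecting kernels is itself residually finite lets us assume $\pi(M,L)$ is finite. For the set: being closed in the profinite topology, $S$ is an intersection of clopen, hence periodic, subsets $S_i\supseteq S$, and enlarging $S$ imposes extra relations, giving surjections $G_L^M(S)\twoheadrightarrow G_L^M(S_i)$; the same elementary fact (again modulo the trivial-intersection point) lets us assume in addition that $S$ is periodic. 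Thus the conjecture is reduced to the single assertion: if $M$ is a finite covering and $S$ is periodic, then $G_L^M(S)$ is residually finite.

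In this reduced situation I would bring in the extension $1\to G_L^M(S)\to\Gamma\to\zz\to 1$ described in the introduction; because $L$ and $M$ are finite and $S$ is periodic, $\Gamma$ acts properly and cocompactly on a CAT(0) cube complex. As residual finiteness is inherited by subgroups, it now suffices to show $\Gamma$ is residually finite, which holds once $\Gamma$ is virtually special (Haglund--Wise: a special group embeds in a right-angled Artin group and is therefore residually finite, and a finitely generated group with a residually finite subgroup of finite index is residually finite) --- so the problem becomes precisely the paper's virtual specialness conjecture for $\Gamma$. I would attack that conjecture case by case: when $\Gamma$ (or its cube complex) carries enough hyperbolicity, Agol's theorem applies; in general the Haglund--Wise criterion reduces virtual specialness of $\Gamma$ to separability of its hyperplane stabilizers, and these should be of the same Bestvina--Brady/right-angled Artin flavour as the whole construction but built from the links $\link(\sigma,L)$ and their induced covers, which suggests an induction on $\dim L$; and in genuinely easy cases --- e.g.\ $S=\emptyset$, where $G_L^M(\emptyset)=BB_M\semi\pi(M,L)$ contains the right-angled Artin subgroup $BB_M$ with finite index, or $\pi(M,L)$ abelian, or $\dim L$ small --- one can write down the special finite-index subgroup by hand.

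I expect the decisive obstacle to be exactly this last step in full generality: after the reductions one is asking whether a cocompact CAT(0) cubical group obtained from a right-angled Artin group by a controlled branching is virtually special, and since cocompact CAT(0) cubical groups need not be residually finite at all, nothing is automatic --- the argument must genuinely use the right-angled Artin origin, and controlling the self- and inter-osculations of the hyperplanes of $\Gamma$ (equivalently, separating their stabilizers) is the open heart of the matter. A secondary, more bookkeeping-style obstacle is justifying the two approximation reductions rigorously, which needs the explicit presentation and cube-complex action of \cite{ufp} to confirm that the relevant families of kernels in $G_L^M(S)$ do intersect in the trivial subgroup.
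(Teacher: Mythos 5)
You correctly treat this statement as one of the paper's open conjectures and aim only at a reduction; in outline your route coincides with the paper's: your two approximation steps are exactly Theorem~\ref{thm:reduction}, which the paper proves as Theorems \ref{thm:fourone}~and~\ref{thm:fourtwo}, and your passage to the cocompact group $G_L^M(S)\semi n\zz$ and its virtual specialness is Conjecture~\ref{conj:special} together with the paper's observation that, by Haglund--Wise~\cite{hw}, that conjecture implies Conjectures \ref{conj:vtorfree}~and~\ref{conj:resfin}.

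Where your calibration is off is in two places. First, the ``trivial intersection of kernels'' that you defer as a bookkeeping obstacle is precisely the substance of the paper's reduction, and it is not formal: an element lying in the kernel of every map $G_L^M(S)\to G_L^M(S_i)$ is not obviously trivial merely because $\bigcap_i S_i=S$. The paper proves element-by-element separation geometrically: for the set reduction it uses the controlled nested periodic sets of Lemma~\ref{lem:closed} (with $T_n\cap[-n,n]=S\cap[-n,n]$) together with a word-length estimate from~\cite{kls}, showing that the geodesic from $v_0$ to $gv_0$ in $X_L^M(S)$ projects to a geodesic in $X_L^M(T_n)$ unless the word length of $g$ exceeds roughly $n\sqrt{2/(d+1)}$; for the covering reduction it shows the geodesic from $x$ to $gx$ stays locally geodesic in $X_L^N(S)$ at the finitely many branch vertices it crosses, provided finitely many explicit non-identity elements of $\pi(M,L)$ survive in $\pi(N,L)$ --- which is exactly where residual finiteness of $\pi(M,L)$ enters. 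Second, in the reduced case your proposed tools diverge from what the paper actually uses: $X_L^M(S)$ contains flats (the cylinders) whenever $L$ has an edge, so Agol's theorem is unavailable, and the paper does not argue via separability of hyperplane stabilizers or induction on $\dim L$. Its established cases come instead from an explicit specialness criterion (Theorem~\ref{thm:bothvspec}: a finite quotient with torsion-free kernel whose image on each subgroup $BB_{\Star(u)\cup\Star(v)}$ is abelian, combined with the cylinder-equivalence analysis and the quotient $Q\times H_1(BB_L;\zz/m\zz)$), realized concretely by wreath-product labellings built from Ore's commutator theorem for coverings pulled back from subdivided graphs (Theorems \ref{thm:vtfone}~and~\ref{thm:vtorfree}) and propagated by subdivision and simplicial approximation (Theorem~\ref{thm:vtorfreeimpliesvspec}). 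One small slip: in your easy case $S=\emptyset$ the finite-index subgroup $BB_M$ of $BB_M\semi\pi$ is a Bestvina--Brady group, not a right-angled Artin group, though your conclusion stands since $(BB_M\semi\pi)\semi n\zz$ embeds with finite index in $A_M\semi\pi$, which is virtually the special group $A_M$.
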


Some cases of the first part of Theorem~\ref{thm:properties} appeared
as~\cite[thm.~3.1]{vv}, and~conjecture~1.3 of~\cite{kv} discusses another
context in which groups that are parametrized by subsets of $\zz$ are expected
to be virtually torsion-free if and only if the subset is periodic;
interestingly the opposite implication is the one that remains open for 
those groups.  In the 1970's Dyson defined a family of groups $L(S)$ for
$S\subseteq \zz$ as amalgamations of two copies of the lamplighter group,
and she showed that $L(S)$ is residually finite if and only if $S$ is
closed in $\zz$~\cite{dyson}.  The connection between residual finiteness
of a group parametrized by $S\subseteq \zz$ and the set $S$ being closed
arises in~\cite{dyson} for much the same reason as in our work.  

We offer some evidence for Conjectures
\ref{conj:vtorfree}~and~\ref{conj:resfin}.  Firstly, we offer
a reduction to a smaller family of cases.  

\begin{theorem}\label{thm:reduction}
  If Conjecture~\ref{conj:vtorfree} or Conjecture~\ref{conj:resfin}
  holds whenever $\pi(M,L)$ is finite and $S$ is periodic, then
  it holds in all cases.
\end{theorem}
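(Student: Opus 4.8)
The plan is to deduce the general cases of Conjectures~\ref{conj:vtorfree} and~\ref{conj:resfin} from their special cases by exploiting functoriality of the assignment $(L,M,S)\mapsto G_L^M(S)$ in the covering and in the subset $S$. Two families of natural surjections are used. First, a tower $M\to N\to L$ of connected regular coverings (so that $\pi(M,N)$ is normal in $\pi(M,L)$) induces a surjection $\rho\colon G_L^M(S)\twoheadrightarrow G_L^N(S)$; when $S\ne\zz$ the kernel of $\rho$ is an extension of $\pi(M,N)$ by a subgroup of a right-angled Artin group --- in the case $S=\emptyset$ it is $\ker(BB_M\to BB_N)\semi\pi(M,N)$ --- and is therefore torsion-free whenever $\pi(M,N)$ is torsion-free, while for $S=\zz$ both groups equal $BB_L$ and $\rho$ is the identity. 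Secondly, an inclusion $S\subseteq T$ induces a surjection $G_L^M(S)\twoheadrightarrow G_L^M(T)$ that ``forgets the branching at the heights of $T\setminus S$'', and these are compatible with the maps $\rho$. The existence of these surjections is a formal consequence of the construction of $G_L^M(S)$ in~\cite{ufp}; the real content is that suitable families of them separate the points of $G_L^M(S)$.

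For Conjecture~\ref{conj:vtorfree}, assume it holds whenever $\pi(M,L)$ is finite and $S$ is periodic, and let $S$ be periodic with $Q:=\pi(M,L)$ virtually torsion-free. We may assume $S\ne\zz$, since $G_L^M(\zz)=BB_L$ is torsion-free. Let $Q_0$ be the normal core in $Q$ of a finite-index torsion-free subgroup, so that $Q_0\trianglelefteq Q$ has finite index and is torsion-free, and set $N:=M/Q_0$. Then $\pi(M,N)=Q_0$ is torsion-free and $\pi(N,L)=Q/Q_0$ is finite, so by the assumed special case $G_L^N(S)$ is virtually torsion-free; pick a finite-index torsion-free subgroup $H\le G_L^N(S)$. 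Then $\rho^{-1}(H)$ has finite index $[G_L^N(S):H]$ in $G_L^M(S)$, and it fits in $1\to\ker\rho\to\rho^{-1}(H)\to H\to 1$ with both ends torsion-free, hence is torsion-free. So $G_L^M(S)$ is virtually torsion-free.

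For Conjecture~\ref{conj:resfin}, assume it holds whenever $\pi(M,L)$ is finite and $S$ is periodic, and let $S$ be closed in the profinite topology on $\zz$ with $Q:=\pi(M,L)$ residually finite; we may assume $S\ne\zz$ and $Q\ne\{1\}$. Writing the open set $\zz\setminus S$ as a union of arithmetic progressions $\bigcup_\alpha(a_\alpha+n_\alpha\zz)$, the sets $T_\alpha:=\zz\setminus(a_\alpha+n_\alpha\zz)$ are periodic, contain $S$, and satisfy $\bigcap_\alpha T_\alpha=S$. The assertion to be proved is that $G_L^M(S)$ is residually
\[
\bigl\{\,G_L^{M/Q_0}(T)\ :\ Q_0\trianglelefteq Q\text{ of finite index},\ T\supseteq S\text{ periodic}\,\bigr\}
\]
via the composites $G_L^M(S)\twoheadrightarrow G_L^M(T)\twoheadrightarrow G_L^{M/Q_0}(T)$. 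Granting this, each listed group has finite deck group $Q/Q_0$ and periodic parameter set, hence is residually finite by the assumed special case; and a group in which every nontrivial element survives in some residually finite quotient is itself residually finite, so $G_L^M(S)$ is residually finite. Thus the task is to show that, given $1\ne g\in G_L^M(S)$, there is a periodic $T\supseteq S$ with $g$ surviving in $G_L^M(T)$, and then a finite-index normal $Q_0\trianglelefteq Q$ with the image of $g$ surviving in $G_L^{M/Q_0}(T)$ --- the latter being possible since, $Q$ being residually finite, its finite-index normal subgroups intersect trivially.

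The step I expect to be the main obstacle is precisely this separation statement (together with the identification of $\ker\rho$ for general $S$ used in the first reduction). The approach is to work with the explicit complex of~\cite{ufp} on which $G_L^M(S)$ acts, a branched covering of the level set that defines $BB_L$: a nontrivial element acts nontrivially already on some finite subcomplex, and any such subcomplex involves only finitely much of the covering $M$ and only heights belonging to some finite set $F\subseteq\zz\setminus S$. As $\zz\setminus S$ is open one may choose a periodic $T$ with $S\subseteq T$ and $T\cap F=\emptyset$; since $G_L^M(S)\to G_L^M(T)$ only unbranches at heights of $T\setminus S\subseteq\zz\setminus F$ it preserves the relevant subcomplex, so $g$ survives. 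One may then choose $Q_0$ so that the finite cover $M/Q_0$ still distinguishes the finitely many cells and covering data witnessing $g\ne 1$, whence $g$ survives in $G_L^{M/Q_0}(T)$. Making this precise --- along with the bare existence and compatibility of the surjections and the structure of $\ker\rho$ --- is a matter of unwinding the construction of~\cite{ufp}, together with Proposition~\ref{prop:torfree}.
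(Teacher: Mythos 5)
Your route is essentially the paper's. The residual-finiteness half is reduced exactly as in the paper: the two separation statements you isolate are the paper's Theorem~\ref{thm:fourone} (for closed $S$ and $g\neq 1$ there is a periodic $T\supseteq S$ with $g$ surviving in $G_L^M(T)$) and Theorem~\ref{thm:fourtwo} (for residually finite $\pi(M,L)$ there is a finite intermediate regular cover $N$ with $g$ surviving in $G_L^N(T)$), and both are proved there by the same mechanism you propose: a geodesic in $X_L^M(S)$ from a base vertex to its translate maps to a local (hence global) geodesic downstairs provided nothing goes wrong at the branched vertices it crosses. The only real difference is cosmetic: the paper proves the periodic-$T$ statement via the nested sequence of Lemma~\ref{lem:closed} and a word-length estimate borrowed from~\cite{kls}, whereas you choose $T$ directly so that $T\setminus S$ avoids the finitely many non-$S$ heights met by the geodesic; both work. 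Your virtually-torsion-free reduction via $N=M/Q_0$ and pulling back a torsion-free finite-index subgroup is also the intended argument.

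Two caveats. First, your structural claim that for general $S\neq\zz$ the kernel of $\rho\colon G_L^M(S)\to G_L^N(S)$ is an extension of $\pi(M,N)$ by a subgroup of a right-angled Artin group is not justified (for nonempty $S$ the kernel is the normal closure of a copy of $Q_0=\pi(M,N)$ inside each vertex stabilizer of height outside $S$, and there is no evident extension of that shape), and it is not needed: a finite-order element of $\rho^{-1}(H)$ maps into $H$, hence to $1$; by the fixed-point argument of Proposition~\ref{prop:torfree} it lies in a conjugate of a vertex stabilizer isomorphic to $\pi(M,L)$, and $\rho$ restricted to such a stabilizer is the projection $Q\to Q/Q_0$ with torsion-free kernel $Q_0$, so the element is trivial. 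Second, your sketch of the finite-cover separation is where the real content sits (the paper's Theorem~\ref{thm:fourtwo}): at each branched vertex on the geodesic the incoming and outgoing directions determine two points of the link $\sss(M)$ at distance at least $\pi$, only finitely many nontrivial deck transformations move one of them within distance $\pi$ of the other, and $Q_0$ is chosen so that this finite set of elements survives in $Q/Q_0$. "Distinguishing the finitely many cells and covering data" should be made precise in exactly this form; as written it is only a placeholder for that argument.
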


Secondly, we establish all the conjectures under some hypotheses
on the covering.  

\begin{theorem}\label{thm:vtorfree}
  Let $\Gamma$ be a simplicial graph, obtained by subdividing each
  edge of another graph into at least $r$ pieces, and let
  $\Delta\rightarrow \Gamma$ be any finite regular covering of $\Gamma$.
  Suppose that $M$ is a connected component of $M_0$, defined as
  the pullback
  \[\begin{tikzcd}
    M_0\arrow[r] \arrow[d]
    &\Delta\arrow[d] \\
    L\arrow[r]&\Gamma
  \end{tikzcd}\]
  for some simplicial map $L\rightarrow \Gamma$.  
  For $r\geq 4$ Conjecture~\ref{conj:vtorfree} holds for $(M,L)$
  and for $r\geq 12$ Conjecture~\ref{conj:resfin} holds for $(M,L)$.
\end{theorem}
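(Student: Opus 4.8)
The plan is to derive both statements from the Haglund--Wise theory of special cube complexes, using the subdivision hypothesis to control the combinatorial geometry of the relevant hyperplanes.

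\emph{Reduction of the hypotheses.} Since $\Delta\to\Gamma$ is a finite regular covering, $\pi(\Delta,\Gamma)$ is finite, and as $M$ is a connected component of the pullback $M_0$ the induced covering $M\to L$ is again regular with $\pi(M,L)$ isomorphic to a subquotient of $\pi(\Delta,\Gamma)$, hence finite; in particular $M\to L$ is a finite cover. A finite group is both virtually torsion-free and residually finite, so for the pairs $(M,L)$ in the statement Conjecture~\ref{conj:vtorfree} asserts precisely that $G^M_L(S)$ is virtually torsion-free for every periodic $S$, and Conjecture~\ref{conj:resfin} asserts precisely that $G^M_L(S)$ is residually finite for every $S\subseteq\zz$ closed in the profinite topology.

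\emph{The periodic case.} Fix a periodic $S$. Since $M$ is a finite cover, the construction recalled in the abstract gives a short exact sequence $1\to G^M_L(S)\to\widehat G\to\zz\to1$ in which $\widehat G$ acts properly and cocompactly on a CAT(0) cube complex $Y$. The complex $Y$ and the hyperplane pattern of the compact quotient $Y/\widehat G$ are assembled from $L$ and $\Delta$, and the hypothesis that every edge of the underlying graph of $\Gamma$ is subdivided into at least $r$ pieces — so that $\Gamma$, and hence $\Delta$, has girth at least $r$, with this largeness transferred through the pullback to the pair $(M,L)$ — forces a lower bound, growing with $r$, on the combinatorial length of the shortest self-osculations and inter-osculations of the hyperplanes of $Y/\widehat G$; one also checks directly that no hyperplane of $Y$ self-intersects. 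The plan is then to pass to the finite cover of $Y/\widehat G$ classified by a homomorphism $\pi_1(Y/\widehat G)\twoheadrightarrow\widehat G\to F$ onto a finite group $F$ chosen to refine the $\pi(M,L)$-action and to ``unwrap'' the short loops that carry these osculations, and to verify that in that cover all self-osculations and inter-osculations have disappeared; the role of the numerical threshold is that $r\ge4$ is exactly what makes a single such unwrapping suffice. By the Haglund--Wise criterion $\widehat G$ is then virtually special, hence virtually torsion-free, residually finite and linear over $\zz$, and all of these properties pass to the subgroup $G^M_L(S)$. This proves Conjecture~\ref{conj:vtorfree} — and in fact also Conjecture~\ref{conj:resfin} — for periodic $S$ once $r\ge4$; only the non-periodic closed sets remain.

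\emph{General closed $S$.} Let $S$ be closed in the profinite topology. Then $\zz\setminus S$ is a union of arithmetic progressions, so $S=\bigcap_i S_i$ is a directed intersection of periodic sets $S_i$. The construction provides natural homomorphisms $\varphi_i\colon G^M_L(S)\to G^M_L(S_i)$ for $S\subseteq S_i$, induced by the ``unbranching'' maps of the branched covers $X^M_L(S)\to X^M_L(S_i)$, and it suffices to show that the $\varphi_i$ are jointly injective: then $G^M_L(S)$ embeds into $\prod_i G^M_L(S_i)$, which is residually finite by the periodic case, so $G^M_L(S)$ is residually finite. Joint injectivity reduces to the statement that a nontrivial element of $G^M_L(S)$, which is supported in a bounded band of integer levels $[-N,N]$, already survives under $\varphi_i$ whenever $S_i$ agrees with $S$ throughout a somewhat larger band — which holds for $i$ large; proving this requires that the branching data at distinct integer levels do not interact under the unbranching map, and it is here that the stronger bound $r\ge12$ is used, the relevant interaction radius being of the order of three times the self-osculation radius of the periodic case. (Alternatively, one could attempt to deduce the closed-$S$ case from the periodic case via Theorem~\ref{thm:reduction}, after checking that the family of pairs $(M,L)$ in the statement is stable under the operations used in its proof.) Together with the periodic case this establishes Conjecture~\ref{conj:resfin} for $r\ge12$.

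\emph{Main obstacle.} The crux is the virtual specialness of $\widehat G$: producing one explicit finite cover of $Y/\widehat G$ with no osculating hyperplanes, and showing that $r=4$ (respectively $r=12$) is the precise threshold at which the required unwrapping becomes available. Equivalently, one must identify the hyperplane stabilizers of the action of $\widehat G$ on $Y$, prove that they are separable and sufficiently malnormal, and feed this into the Haglund--Wise machinery. The secondary difficulty, which governs the constant $12$, is the joint injectivity of the $\varphi_i$ for non-periodic $S$: this needs a careful local analysis of the branched covers $X^M_L(S)$ and of the way the defining data of $G^M_L(S)$ depends on $S$ only locally in $\zz$.
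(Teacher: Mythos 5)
Your write-up is a plan rather than a proof: the step you yourself flag as the ``main obstacle'' --- actually producing a finite quotient (equivalently a finite special cover) --- is precisely the content of the theorem, and nothing in your sketch supplies it. Moreover, the mechanism you propose is not the one that makes the numerical hypotheses work, and I see no argument that it could. In the paper the role of $r\geq 4$ has nothing to do with girth or with ``unwrapping short osculating loops'': it is that each subdivided edge of $\overline{\Gamma}$ carries at least four edges of $\Gamma$, on which one places four explicit elements $a,b,c,d$ of a wreath product $S_N\wr C_n$ (Proposition~\ref{prop:hom}, built from Ore's theorem that every element of $A_N$ is a commutator) so that $a^jb^jc^jd^j$ detects exactly the residue class $j\equiv k$ modulo $n$; taking a product of such labellings over the classes missing from $S$ gives a homomorphism $G_\Gamma^\Delta(S)\to$ (finite group) with torsion-free kernel (Theorem~\ref{thm:vtfone}), and functoriality along $L\to\Gamma$ transports this to $G_L^M(S)$. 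Your claim that $r\geq 4$ already yields virtual specialness (hence residual finiteness for periodic $S$) is unsupported and goes beyond what is true in the paper's scheme: specialness is obtained via Theorem~\ref{thm:bothvspec}, whose extra hypothesis --- that for adjacent $u,v\in L$ the image of the subgroup generated by the edges of $\Star(u)\cup\Star(v)$ is abelian --- is exactly what $r\geq 12$ buys, by composing $L\to\Gamma$ with a map $\Gamma\to\widehat{\Gamma}$ (the $4$-fold subdivision) that collapses any three consecutive edges of $\Gamma$ to at most one edge of $\widehat{\Gamma}$; Proposition~\ref{prop:vspec} then promotes this to $G_L^M(S)\semi n\zz$. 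Separability/malnormality input to the Haglund--Wise machinery, or a cover ``classified by a homomorphism onto a finite group chosen to unwrap short loops'', appears nowhere and would itself have to be constructed --- which is the whole difficulty.

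Your treatment of general closed $S$ also misplaces where $r\geq 12$ enters. The passage from closed $S$ to periodic $T\supseteq S$ (your ``joint injectivity of the $\varphi_i$'') is Theorem~\ref{thm:fourone} together with Lemma~\ref{lem:closed}, proved by a geodesic-projection argument in $X_L^M(S)$ that requires no hypothesis on $r$ at all; the sole use of $r\geq 12$ is in the periodic case, to get virtual specialness and hence residual finiteness of the groups $G_L^M(T)$ one maps to. So your assertion that the ``interaction radius'' in the non-periodic reduction governs the constant $12$ is incorrect, and your parenthetical alternative (reduce via the results behind Theorem~\ref{thm:reduction}) is in fact the intended route. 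In summary: the reduction to finite $\pi(M,L)$ and the intersection-of-periodic-sets strategy are fine, but the two constructions that carry the theorem --- the wreath-product labelling for $r\geq 4$ and the star-abelian finite quotient feeding Theorem~\ref{thm:bothvspec} for $r\geq 12$ --- are missing, and the heuristics offered in their place (girth bounds, osculation lengths growing with $r$, thresholds $4$ and $12$ as ``unwrapping'' constants) do not reflect how the hypotheses are actually used and are not substantiated.
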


The hypotheses on the covering $M\rightarrow L$ split naturally into
two parts, one topological and one combinatorial.  The topological
hypothesis is that there is a graph $\overline{\Gamma}$ and a finite regular
covering $p:\overline{\Delta}\rightarrow \overline{\Gamma}$, together with a map
$f:|L|\rightarrow |\overline{\Gamma}|$ of topological realizations, such
that $|M|$ is a connected component of the pullback covering.  We recall that
the pullback is the regular covering of $|L|$ defined as
$\{(x,y)\in |L|\times |\overline{\Delta}|\,\,:\,\,f(x)=p(y)\}$, with
the covering map $(x,y)\mapsto x$.  The
combinatorial hypothesis is that the triangulation of $L$
(and hence also of $M$) is sufficiently fine that $f$ is homotopic
to a simplicial map from $L$ to a suitable subdivision $\Gamma$ of
$\overline{\Gamma}$.

In the following corollary, we replace the topological hypothesis by
a hypothesis that involves only the fundamental groups $\pi_1(L)$
and $\pi_1(M)$, at the expense of making the combinatorial hypothesis
far less explicit.  

\begin{corollary}\label{cor:vtorfree}
  Suppose that there is a homomorphism $f:\pi_1(L)\rightarrow F$, for
  $F$ a free group and a finite-index normal subgroup $N\triangleleft
  F$ so that $\pi_1(M)=f^{-1}(N)$.  Then both conjectures
  \ref{conj:vtorfree}~and\ref{conj:resfin} hold for sufficiently fine
  subdivisions $(M',L')$ of the pair $(M,L)$.
\end{corollary}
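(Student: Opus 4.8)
The plan is to reduce the corollary to Theorem~\ref{thm:vtorfree} by manufacturing, from the purely group-theoretic data $(f,N)$, the topological data (graph $\overline{\Gamma}$, finite regular covering $\overline{\Delta}\to\overline{\Gamma}$, map $|L|\to|\overline{\Gamma}|$) required there. The starting observation is that a free group $F$ is the fundamental group of a graph: choose a free basis of $F$ and let $\overline{\Gamma}$ be the corresponding wedge of circles (or any finite graph with $\pi_1=F$ if $F$ is finitely generated; if $F$ is not finitely generated, one may first replace $F$ by the subgroup generated by the image of $f$ together with enough of a transversal for $N$, which is still free and now finitely generated because $\pi_1(L)$ is finitely generated and $[F:N]<\infty$). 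Then $N\triangleleft F$ of finite index corresponds to a finite regular covering $\overline{\Delta}\to\overline{\Gamma}$ with $\pi_1(\overline{\Delta})=N$ and deck group $F/N$.

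Next I would realize the homomorphism $f$ topologically. Since $\overline{\Gamma}$ is a $K(F,1)$, the map $f:\pi_1(L)\to F=\pi_1(\overline{\Gamma})$ is induced by a continuous map $g:|L|\to|\overline{\Gamma}|$, unique up to homotopy. The pullback covering of $|L|$ along $g$ and $p:|\overline{\Delta}|\to|\overline{\Gamma}|$ is then a finite regular covering of $|L|$ whose deck group is (a quotient of) $F/N$ and whose connected components correspond to cosets of $\mathrm{im}(f)\cdot N$ in $F$; the component containing the chosen basepoint has fundamental group exactly $f^{-1}(N)=\pi_1(M)$, so it is (topologically) the covering $|M|\to|L|$. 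This checks the topological hypothesis of Theorem~\ref{thm:vtorfree}, with the one caveat that $g$ is merely continuous, not simplicial.

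The combinatorial hypothesis is then dealt with by subdivision, exactly as described in the paragraph of the excerpt following Theorem~\ref{thm:vtorfree}: by the simplicial approximation theorem, after passing to a sufficiently fine subdivision $L'$ of $L$, the map $g$ is homotopic to a simplicial map from $L'$ to a subdivision $\Gamma$ of $\overline{\Gamma}$ that has each edge broken into at least $r$ pieces (take $r=4$ for Conjecture~\ref{conj:vtorfree}, $r=12$ for Conjecture~\ref{conj:resfin}; note one is free to subdivide $\overline{\Gamma}$ as finely as one likes, since subdividing the base of a covering does not change the covering). Let $\Delta\to\Gamma$ be the induced covering and let $M'$ be the pullback component over $L'$; since simplicial subdivision does not change the homeomorphism type, $M'$ is a subdivision of $M$ and the pair $(M',L')$ is a subdivision of $(M,L)$ of the required fineness. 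Now Theorem~\ref{thm:vtorfree} applies directly to $(M',L')$, giving both conjectures for that pair.

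I expect the main obstacle to be bookkeeping rather than anything deep: one must be careful that the simplicial approximation of $g$ can be arranged to land in an arbitrarily fine subdivision of $\overline{\Gamma}$ while keeping $\overline{\Gamma}$ finite (so that the covering stays finite), and one must verify that the basepoint component of the pullback really has the claimed fundamental group $f^{-1}(N)$ — this is a standard computation with the long exact sequence of the covering, or equivalently with the correspondence between connected covers and subgroups, but it is the place where the hypothesis "$\pi_1(M)=f^{-1}(N)$" (rather than merely "$\pi_1(M)\supseteq f^{-1}(N)$" or similar) is used in an essential way. The reduction to finitely generated $F$ in the non-finitely-generated case is also a point to handle cleanly, but since $\pi_1(L)$ is finitely generated (as $L$ is a finite complex) and $F/N$ is finite, the relevant part of $F$ is finitely generated and the argument goes through.
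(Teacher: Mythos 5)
Your proposal is correct and follows essentially the same route as the paper's proof: realize $F$ as the fundamental group of a rose, induce $f$ by a continuous map $|L|\to|\overline{\Gamma}|$ (the paper does this by the same obstruction-theory/$K(F,1)$ argument), subdivide the rose so each edge has at least $12$ pieces, apply simplicial approximation to get a simplicial map from a fine subdivision $L'$, and take a connected component of the pullback of the cover corresponding to $N$, which has fundamental group $f^{-1}(N)=\pi_1(M)$, so that Theorem~\ref{thm:vtorfree} applies. The only differences are cosmetic (your optional reduction to a finitely generated subgroup of $F$, which the paper avoids by allowing a rose with arbitrarily many petals).
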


The distinction between the topological and combinatorial hypotheses
is a useful one.  If $(M',L')$ is a subdivision of $(M,L)$, the
homological finiteness properties of $G_{L'}^{M'}(S)$ are similar to those
of $G_L^M(S)$.  So from the point of view of constructing examples,
the combinatorial hypotheses that we make can be ignored.  However, 
we warn the reader that there may be a topological
obstruction to each of our conjectures, although we have been unable
to construct any counterexamples.  In particular, in the case
when $L$ is a flag triangulation of the projective plane $\rr P^2$
and $M$ its universal cover, we have been able to establish
Conjecture~\ref{conj:vtorfree} only for a small number of choices
of $S$, including $S=2\zz$ which is a special case of
Proposition~\ref{prop:homology}.  We see $L=\rr P^2$ as an
important test case for Conjecture~\ref{conj:vtorfree}.  

The proofs of many of our results use an action of the group
$G_L^M(S)$ on a CAT(0) cubical complex $X_L^M(S)$, which generalizes
the action of $BB_L$ on the universal cover of the Salvetti complex
for the right-angled Artin group $A_L$.  The group $G_L^M(S)$ acts
freely except that some vertex stabilizers are isomorphic to
$\pi(M,L)$.  In particular, the action is proper if and only if
$\pi(M,L)$ is finite.  The action of $G_L^M(S)$ on $X_L^M(S)$
has infinitely many orbits of vertices and so is never cocompact.
However, in the case when $S$ is periodic of period $n$ there is
a larger cocompact group of cubical automorphisms of $X_L^M(S)$ which
we denote by $G_L^M(S)\semi n\zz$.  The action of $G_L^M(S)\semi n\zz$
on $X_L^M(S)$ has $n$ orbits of vertices and the group contains
$G_L^M(S)$ as a normal subgroup with infinite cyclic quotient.  

In the case when both $\pi(M,L)$ is finite and $S$ is periodic of
period $n>0$, the group $G_L^M(S)\semi n\zz$ is
CAT(0) cubical in the sense that it acts properly and cocompactly on
the CAT(0) cube complex $X_L^M(S)$.  We make a third conjecture
concerning this case.

\begin{conjecture}\label{conj:special}
  If $\pi(M,L)$ is finite and $S+n=S$ for some $n>0$ then the CAT(0)
  cubical group $G_L^M(S)\semi n\zz$ is virtually special in the sense
  of~\cite{hw}.  
\end{conjecture}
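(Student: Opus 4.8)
We sketch an approach to Conjecture~\ref{conj:special}. Since virtual specialness passes to and from finite-index subgroups, and a virtually special group is residually finite and virtually torsion-free (being virtually a subgroup of a right-angled Artin group, which is torsion-free and $\zz$-linear), a proof of Conjecture~\ref{conj:special} together with Theorem~\ref{thm:reduction} would settle Conjectures~\ref{conj:vtorfree} and~\ref{conj:resfin}. The plan is to attack Conjecture~\ref{conj:special} through the Haglund--Wise theory of special cube complexes, applied to the proper cocompact action of $G:=G_L^M(S)\semi n\zz$ on $X:=X_L^M(S)$. The first step is to reduce to a free action: whenever $G$ is already known to be virtually torsion-free --- for example in the situations of Theorem~\ref{thm:vtorfree} --- one chooses a torsion-free finite-index subgroup $G_0\leq G$, so that $Y:=X/G_0$ is a compact non-positively curved cube complex with $\pi_1(Y)\cong G_0$. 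It then suffices to produce a finite cover of $Y$ admitting a combinatorial local isometry to the Salvetti complex of a graph; equivalently, a finite cover in which no hyperplane is one-sided, self-intersecting, directly self-osculating, or inter-osculating with another.

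The second step is an analysis of the hyperplanes of $X$ together with an induction. As in the Salvetti complex, one family of hyperplanes is indexed by the vertices of $L$ (together with a level), and the $G$-stabilizer of such a hyperplane should again be a group of the same shape, namely $G_{\link(v)}^{M_v}(S_v)\semi n_v\zz$, where $M_v\rightarrow\link(v)$ is the covering induced from $M\rightarrow L$ and $S_v$ an appropriate translate of $S$; here $\pi(M_v,\link(v))$ is a subquotient of the finite group $\pi(M,L)$, so the hypotheses of Conjecture~\ref{conj:special} persist for a flag complex with fewer vertices. This suggests an induction on the number of vertices of $L$, the base case being $L$ a simplex, where $G_L^M(S)=BB_L$ embeds in the right-angled Artin group $A_L$ and the whole picture is genuinely special. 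The inductive hypothesis then provides, for each hyperplane, a special finite cover.

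The third step is to promote ``each hyperplane is virtually special'' to ``$Y$ is virtually special'' using the Haglund--Wise criterion: it is enough that every hyperplane stabilizer be separable in $G_0$ and that, for each crossing or osculating pair of hyperplanes, the corresponding double coset of stabilizers be separable. Because hyperplane stabilizers are convex in the CAT(0) cube complex $X$, both requirements reduce to separability statements which, in the cases of Theorem~\ref{thm:vtorfree}, can be extracted from the pullback data $M\subseteq M_0\rightarrow\Delta\rightarrow\Gamma$: the finite graph covering $\Delta\rightarrow\Gamma$ yields a map from $X$ to a cube complex built from the generators of $L$ together with $\Delta$, and one would extend the construction underlying Theorem~\ref{thm:vtorfree} --- which already separates the walls needed for virtual torsion-freeness and residual finiteness --- to check that a suitable finite cover of this map is a local isometry onto a Salvetti complex. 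For $r$ at least as large as in the residual-finiteness clause of Theorem~\ref{thm:vtorfree}, and perhaps somewhat larger, this should yield Conjecture~\ref{conj:special} for the graph-pullback family, and hence, by the remarks above together with Theorem~\ref{thm:reduction}, the corresponding cases of Conjectures~\ref{conj:vtorfree} and~\ref{conj:resfin}.

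The main obstacle is the general covering. When $M\rightarrow L$ is not pulled back from a graph covering there is no visible free group controlling $\pi(M,L)$, the combinatorial hypothesis of Theorem~\ref{thm:vtorfree} is unavailable, and one is reduced to separating, inside $G_0$, the subgroups responsible for the possible self-osculation of the hyperplanes that carry the covering data --- a separability assertion that seems to presuppose residual finiteness of $G$, so that the argument runs in a circle. This is precisely the obstruction the authors flag for $L$ a flag triangulation of $\rr P^2$ with $M$ its universal cover; removing it would apparently require either a new cubulation of $G$ adapted to the non-graph case, or a direct proof that $G$ is residually finite, and a positive answer for $\rr P^2$ would be the decisive test of whether this topological obstruction is genuine.
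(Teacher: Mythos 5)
The statement you are addressing is a conjecture which the paper itself leaves open: the authors prove only special cases (Theorems \ref{thm:vtorfreeimpliesvspec}, \ref{thm:vtorfree}, Proposition \ref{prop:homology}), so there is no complete proof in the paper to compare against, and your text is likewise a strategy sketch rather than a proof. Within that sketch there are two concrete gaps. First, your inductive step rests on the unverified claim that the stabilizer in $G_L^M(S)\semi n\zz$ of a hyperplane dual to edges labelled $v$ is again a group of the form $G_{\link(v)}^{M_v}(S_v)\semi n_v\zz$; nothing in the structure theory developed for $X_L^M(S)$ gives this (the covering induced on $\link(v)$ need not even be connected, and the paper never identifies hyperplane stabilizers in this way), and even granting it, ``every hyperplane stabilizer is virtually special'' does not by itself yield virtual specialness of the ambient group without the separability input. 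Second, that separability input is exactly where, as you yourself concede, the argument becomes circular: separability of hyperplane stabilizers and of the relevant double cosets in $G_0$ is a residual-finiteness-type property of $G_L^M(S)\semi n\zz$, which is among the things the conjecture is meant to establish. So the proposal does not close the conjecture, nor does it recover the cases the paper does prove.

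It is worth noting how the paper's partial results avoid this circle, because the route is genuinely different from yours. Instead of the Haglund--Wise separability/canonical-completion machinery, the authors verify the special-complex conditions directly in an explicitly constructed finite cover: Theorem \ref{thm:bothvspec} shows that if there is a finite quotient $\theta:G_L^M(S)\rightarrow Q$ with torsion-free kernel such that for adjacent $u,v\in L$ the image of the subgroup generated by the edges of $J=\Star(u)\cup\Star(v)$ is abelian, then the kernel $K$ of the combined map $(\theta,\phi)$ to $Q\times H_1(BB_L;\zz/m\zz)$ gives a special complex $X_L^M(S)/K$; the mechanism is that vertex stabilizers land in $Q\times\{0\}$ while $BB_J$ injects into the homology factor, so no cylinder-equivalent (hence no hyperplane-equivalent) edge can osculate at a branched vertex. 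Proposition \ref{prop:vspec} then upgrades non-cocompact virtual specialness of $G_L^M(S)$ to virtual specialness of $G_L^M(S)\semi n\zz$ by passing to a characteristic finite-index subgroup and replacing $n\zz$ by $mn\zz$ so as to preserve hyperplanes. The star-abelian hypothesis is arranged either automatically after a suitable subdivision (Theorem \ref{thm:vtorfreeimpliesvspec}, via simplicial approximation sending $\Star(u)\cup\Star(v)$ into a single simplex), or, in the graph-pullback case, by an explicit labelling of edges by elements of the wreath product $S_N\wr C_n$ built from Ore's theorem on commutators (Theorem \ref{thm:vtfone}). This explicit-quotient approach buys unconditional results in those cases precisely because it never needs to separate subgroups of a group whose residual finiteness is unknown; your approach, if the separability inputs could be supplied independently, would be more conceptual and potentially more general, but as written it presupposes what it sets out to prove.
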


Haglund and Wise showed that virtually special groups are virtually
torsion-free and residually finite~\cite{hw}, and hence 
Conjecture~\ref{conj:special} implies our other conjectures.  

\begin{proposition}
  Conjecture~\ref{conj:special} for the complex $L$ implies
  Conjectures \ref{conj:vtorfree}~and~\ref{conj:resfin} for
  the complex $L$.
\end{proposition}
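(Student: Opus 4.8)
The plan is to combine Theorem~\ref{thm:reduction} with the theorem of Haglund and Wise that virtually special groups are virtually torsion-free and residually finite~\cite{hw}. By Theorem~\ref{thm:reduction}, in order to prove Conjecture~\ref{conj:vtorfree} (respectively Conjecture~\ref{conj:resfin}) for the complex $L$ it suffices to prove it for those pairs $(M,L)$ for which $\pi(M,L)$ is finite and $S$ is periodic. I would first check that the reduction carried out in the proof of Theorem~\ref{thm:reduction}, when applied to a pair over $L$, only appeals to instances of the conjecture for pairs over the same complex $L$, so that the per-complex form of the implication is legitimate. Granting this, fix a pair $(M,L)$ with $\pi(M,L)$ finite and a set $S$ with $S+n=S$ for some $n>0$.

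The point of the reduction is that for such $(M,L,S)$ the hypotheses of Conjectures~\ref{conj:vtorfree} and~\ref{conj:resfin} hold automatically: a finite group is both virtually torsion-free and residually finite, and a periodic set of period dividing $n$ is a finite union of cosets of $n\zz$, each of which is closed in the profinite topology on $\zz$ (its complement being a union of the other, open, cosets), so $S$ is closed. Hence in this case the two conjectures simply assert that $G_L^M(S)$ is, respectively, virtually torsion-free and residually finite.

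Now I would apply Conjecture~\ref{conj:special} to the pair $(M,L)$ and the integer $n$: the CAT(0) cubical group $G_L^M(S)\semi n\zz$ is virtually special, hence by~\cite{hw} it is virtually torsion-free and residually finite. By the description recalled above, $G_L^M(S)$ is a subgroup of $G_L^M(S)\semi n\zz$ (indeed the kernel of the map to $\zz$). Both properties pass to subgroups: a finite-index torsion-free subgroup of $G_L^M(S)\semi n\zz$ meets $G_L^M(S)$ in a finite-index torsion-free subgroup, and a family of finite quotients of $G_L^M(S)\semi n\zz$ separating its points restricts to one separating the points of $G_L^M(S)$. Therefore $G_L^M(S)$ is virtually torsion-free and residually finite, which is exactly what the reduced cases of the two conjectures demand, and the Proposition follows.

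The argument is essentially formal given Theorem~\ref{thm:reduction} and the Haglund--Wise theorem; the only step that needs genuine attention is the first one, namely verifying that the reductions behind Theorem~\ref{thm:reduction} stay within the class of pairs over the fixed complex $L$ rather than, say, introducing covers or subdivisions of $L$. If subdivisions do intervene, the clean fix --- harmless in view of the remark in the introduction that subdivision does not change $\pi(M,L)$ --- is to read ``Conjecture~\ref{conj:special} for the complex $L$'' as including all subdivisions of $L$.
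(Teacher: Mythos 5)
Your argument is correct and is essentially the paper's own proof: reduce via Theorem~\ref{thm:reduction} to the case where $\pi(M,L)$ is finite and $S$ is periodic, then apply Conjecture~\ref{conj:special} together with the Haglund--Wise theorem and pass virtual torsion-freeness and residual finiteness from $G_L^M(S)\semi n\zz$ to its subgroup $G_L^M(S)$. Your one worry is unfounded: the reduction in Section~\ref{sec:four} only replaces $M$ by intermediate finite covers $N$ of the \emph{same} complex $L$ and $S$ by periodic supersets $T$, never subdividing $L$ (subdivisions enter only in Theorem~\ref{thm:vtorfreeimpliesvspec} and Corollary~\ref{cor:vtorfree}), so no reinterpretation of ``Conjecture~\ref{conj:special} for $L$'' is needed.
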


Rather more surprisingly, we show that, at least from the topological
viewpoint, there is no obstruction to Conjecture~\ref{conj:special}
provided that Conjecture~\ref{conj:vtorfree} holds.  

\begin{theorem}\label{thm:vtorfreeimpliesvspec}
  Suppose that $\pi(M,L)$ is finite and that $S+n=S$ for some $n>0$.
  If $G_L^M(S)$ is virtually torsion-free, then for any sufficiently
  fine subdivision $(M',L')$ of the pair $(M,L)$, the group
  $G_{L'}^{M'}(S)\semi n\zz$ is virtually special.  In particular,
  Conjecture~\ref{conj:vtorfree} for $L$ implies Conjectures
  \ref{conj:special}~and~\ref{conj:resfin} for $L'$.  The second
  barycentric subdivision is sufficiently fine.  
\end{theorem}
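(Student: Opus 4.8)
The plan is to exhibit a finite-index subgroup of $G_{L'}^{M'}(S)\semi n\zz$ that acts freely on the CAT(0) cube complex $X_{L'}^{M'}(S)$ with \emph{special} quotient, and then invoke Haglund--Wise. First I would move to a torsion-free subgroup ``upstairs''. The group $\Gamma:=G_L^M(S)\semi n\zz$ is finitely generated (generated by the edge generators of $X_L^M(S)$, the finite group $\pi(M,L)$, and a translation generating $n\zz$), and all of its torsion is conjugate into the finite subgroup $\pi(M,L)$. Hence the hypothesis that $G_L^M(S)$ is virtually torsion-free upgrades to $\Gamma$: choose a finite-index torsion-free $G_0\triangleleft G_L^M(S)$ that is characteristic (e.g. the intersection of all subgroups of a fixed index that bounds a torsion-free one), so $G_0$ is normal in $\Gamma$; then $\Gamma/G_0$ is virtually cyclic, and the preimage in $\Gamma$ of a finite-index infinite cyclic subgroup of $\Gamma/G_0$ is a finite-index subgroup $H\le\Gamma$ of the form $G_0\semi\zz$, which is torsion-free since any element outside $G_0$ has infinite order. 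Thus $H$ acts freely and cocompactly on $X:=X_L^M(S)$, and $H$ is virtually special if and only if $X/H$ is.

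Second, I would analyse the hyperplanes of $X_L^M(S)$ via its description in~\cite{ufp} as a complex assembled, Salvetti-style, from cubes indexed by simplices of $M$ and by the periodic data $S$. Away from the fixed points of $\pi(M,L)$ the local picture is that of the universal cover of a right-angled Artin Salvetti complex, where all four Haglund--Wise pathologies are absent, so the obstructions to specialness of $X/\Gamma$ are concentrated at, and propagated from, the vertex stabilizers. Passing from $\Gamma$ to the torsion-free $H$ removes this orbifold behaviour: in $X/H$ the vertex links are the flag links coming from $M$, the hyperplanes are two-sided, and no hyperplane self-intersects. What can still fail is self-osculation of a hyperplane or inter-osculation of two hyperplanes, because the way $M\to L$ and the period-$n$ set $S$ wrap the edges of $X$ can force two edges dual to one hyperplane to meet without spanning a square. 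These are exactly the failures a subdivision repairs: after replacing $(M,L)$ by a sufficiently fine subdivision $(M',L')$ each generalized-Salvetti edge is broken into a chain, so such edges cannot osculate, while the fullness produced by barycentric subdivision makes hyperplane carriers convex and embedded. I expect two barycentric subdivisions to be what is needed — the first to make the relevant subcomplexes flag and full, the second to separate osculations globally — mirroring the two-subdivision arguments standard for level sets of Morse functions on Salvetti complexes. Since $\pi(M',L')=\pi(M,L)$ is unchanged, the torsion-free quotient witnessing the first paragraph is built from the same covering data over $(M',L')$, so the construction of $H$ goes through verbatim with primes, and one concludes that $G_{L'}^{M'}(S)\semi n\zz$ is virtually special. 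The last assertion of the theorem is then immediate from the Proposition that Conjecture~\ref{conj:special} for $L'$ implies Conjectures \ref{conj:vtorfree}~and~\ref{conj:resfin} for $L'$.

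The main obstacle I foresee is this second step: making precise, in the explicit model for $X_L^M(S)$, exactly which hyperplane pathologies survive the passage to a torsion-free subgroup, and then proving that a \emph{bounded} (indeed: second barycentric) number of subdivisions kills all of them simultaneously — in particular ruling out inter-osculation, the subtlest of the four conditions and the one most sensitive to how the covering $M\to L$ identifies edges of $X$. A secondary, more bookkeeping difficulty is tracking finite-index subgroups across the subdivision carefully enough to certify that the second barycentric subdivision is genuinely fine enough rather than merely ``some'' subdivision.
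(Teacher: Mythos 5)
There is a genuine gap, and it sits exactly where you flag it: you never actually rule out direct self-osculation and inter-osculation in the quotient complex, and the mechanism you propose for doing so does not work. Subdividing the pair $(M,L)$ does \emph{not} subdivide the cube complex: $X_{L'}^{M'}(S)$ is a different CAT(0) cube complex (one edge-label per vertex of $L'$), not a refinement of $X_L^M(S)$, so the picture of ``each generalized-Salvetti edge broken into a chain, so such edges cannot osculate'' is incorrect, and no bounded amount of subdivision of $L$ can, by itself, separate osculating edges of the cube complex. In the paper the subdivision plays a completely different role: choosing $L'$ \emph{suitable} (the second barycentric subdivision works) means there is a simplicial approximation to the identity carrying $\Star(u)\cup\Star(v)$, for adjacent $u,v\in L'$, into a single simplex of $L$; consequently the subgroup $BB_J\leq G_{L'}^{M'}(S)$, $J=\Star(u)\cup\Star(v)$, has abelian image in the finite quotient $Q$ pulled back from the virtually torsion-free group $G_L^M(S)$. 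The specialness is then obtained not from the torsion-free subgroup you construct, but from an \emph{enlarged} finite quotient $(\theta,\phi):G_{L'}^{M'}(S)\to Q\times H_1(BB_{L'};\zz/m\zz)$ ($m$ the exponent of $Q$): since $\phi(BB_J)$ is the largest exponent-$m$ abelian quotient of $BB_J$ while $\theta(BB_J)$ is abelian, the image of $BB_J$ meets $Q\times\{0\}$ trivially, and a cylinder-equivalence analysis of edges (Theorem~\ref{thm:bothvspec}) shows any element identifying osculating edges would have to lie both in $(\theta,\phi)(BB_J)$ and in a vertex stabilizer, which maps into $Q\times\{0\}$ --- a contradiction. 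None of this argument, nor any substitute for it, appears in your proposal; your own closing paragraph concedes the point.

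A secondary gap is the passage to $G_{L'}^{M'}(S)\semi n\zz$. Quotienting further by the cyclic action can merge hyperplanes of $X_{L'}^{M'}(S)/H$ and thereby create new self-osculations or inter-osculations; the paper handles this (Proposition~\ref{prop:vspec}) by making $H$ characteristic and then passing to $mn\zz$ for an $m$ such that the cyclic group preserves each hyperplane. Your first paragraph produces a torsion-free finite-index subgroup of $G_L^M(S)\semi n\zz$ (that part is fine, and close in spirit to Proposition~\ref{prop:vspec}), but freeness of the action gives only two-sidedness and absence of self-intersections --- which hold for every subgroup here anyway because of the labels and the height function --- and does not address the hyperplane-merging issue, nor, as above, the osculation conditions that are the actual content of virtual specialness.
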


To show that $G_L^M(S)$ is virtually torsion-free, we rely on group
presentations and carefully chosen maps to finite groups.  However,
most of our other results rely heavily on studying the cube complex
$X_L^M(S)$.  For example, to prove that a non-identity element $g\in
G_L^M(S)$ has non-identity image in $G_L^N(T)$ for some finite cover
$N$ and periodic $T\supseteq S$ we show that we can choose $N$ and $T$
so that the geodesic in $X_L^M(S)$ from a base vertex $x_0$ to $gx_0$
projects to a geodesic in $X_L^N(T)$.  All our results concerning
residual finiteness rely on proving cases of
Conjecture~\ref{conj:special}.  Like the action of $BB_L$ on the
universal covering of the Salvetti complex, the action of $G_L^M(S)$
on the CAT(0) cube complex $X_L^M(S)$ is never cocompact, but it
does have only finitely many orbits of hyperplanes.  To show that
$X_L^M(S)/H$ is non-cocompact special 
for some torsion-free finite-index normal subgroup $H\leq G_L^M(S)$ we
use the action of $Q=G_L^M(S)/H$ on the complex.  Edges of
$X_L^M(S)/H$ are in free $Q$-orbits, but some of the vertices are in
non-free orbits.  As an example, to show that a hyperplane in
$X_L^M(S)/H$ cannot directly self-osculate we consider the stabilizer
in $Q$ of the hyperplane.  Provided that this stabilizer has trivial
intersection with each vertex stabilizer no direct inter-osculation
can occur.

After reviewing some background material, we prove
Theorem~\ref{thm:properties} in Section~\ref{sec:three} and
Theorem~\ref{thm:reduction} in Section~\ref{sec:four}.  Sections
\ref{sec:five}~and~\ref{sec:six} complete the proof of
Theorem~\ref{thm:vtorfreeimpliesvspec} and Section~\ref{sec:seven}
completes the proof of Theorem~\ref{thm:vtorfree}.  Sections
\ref{sec:eight}~and~\ref{sec:nine} describe further examples,
and in Section~\ref{sec:ten} we use our results to construct
groups with surprising combinations of properties.

Much of this work was done while the second named author was working on
his PhD under the supervision of the first named author; further
related work appears in the second named author's PhD
thesis~\cite{vvtwo} and in~\cite{vv}.  The authors thank the
referee for their comments on an earlier version of this article.

\section{Background}\label{sec:back}

A flag complex is a simplicial complex $L$ with the property that
every finite clique within its edge graph spans a simplex.  The
barycentric subdivision of any simplicial complex is flag.  The
right-angled Artin group associated to a flag complex is the
group with generators the vertices of $L$, subject only to the
relations that the ends of each edge commute:
\[A_L=\langle v\in L^0 \,\,:\,\, [v,w]=1 \,\,(v,w)\in L^1\rangle.\]
This construction is functorial in $L$, in the sense that a
simplicial map $f:M\rightarrow L$ induces a group homomorphism
$f_*:A_M\rightarrow A_L$ defined on generating sets by $f_*(v)=f(v)$.  

There is a good model of the Eilenberg-Mac~Lane space $K(A_L,1)$, the
Salvetti complex, which we shall denote by $\ttt_L$.  For $v\in L^0$,
let $\ttt_v$ be a copy of the unit circle, viewed as a CW-complex with
one 0-cell and one 1-cell.  For each simplex $\sigma$ of $L$, define
$\ttt_\sigma$ to be the product $\prod_{v\in \sigma}\ttt_v$.  This
gives a functor from the simplices of $L$ (including the empty set,
viewed as the unique $-1$-simplex) to CW-complexes and cellular maps,
and the complex $\ttt_L$ is the colimit of this functor, which is a
subcomplex of the product $\prod_{v\in L^0}\ttt_v$.  Equivalently,
$\ttt_L$ is the polyhedral product of the pair $(\ttt,*)^L$.

The map $L\rightarrow \ttt_L$ can also be made strictly functorial,
using the group structure on the circle, where we insist that the
0-cell in the CW-complex $\ttt$ is the identity element of the group.
If $f:M\rightarrow L$ is a simplicial map of flag complexes, the group
structure on the torus is used to define a based map
$\ttt(f):\ttt_M\rightarrow \ttt_L$ that induces $f_*:A_M\rightarrow
A_L$ on fundamental groups.  To describe $\ttt(f)$, view $\ttt_M$ as a
subcomplex of the torus $\ttt^{M^0}$ and similarly, view $\ttt_L$ as a
subcomplex of the torus $\ttt^{L^0}$.  With this notation, the map
$\ttt(f)$ can be defined coordinatewise.  For $v\in L^0$, let
$U=f^{-1}(v)\subseteq M^0$.  Now the $v$-coordinate of $\ttt(f)$ takes
$(t_1,\ldots,t_k)\in \ttt^U=\prod_{u\in U}\ttt_u$ to the product
$t_1\cdots t_k\in \ttt_v$.

The Bestvina-Brady group $BB_L$ associated to a non-empty flag
complex is the kernel of the homomorphism $A_L\rightarrow \zz$
that sends each vertex to $1\in (\zz,+)$.  Like $A_L$, this is
functorial in the non-empty flag complex $L$; in particular,
if $*$ denotes a 1-vertex complex then $BB_L$ may be viewed as
the kernel of the map $A_L\rightarrow  A_*$ induced by the
unique (simplicial) map $L\rightarrow *$.  There is a good
model for $K(BB_L,1)$, defined as the infinite cyclic
covering $\widetilde{\ttt}_L$ of $\ttt_L$, which comes
equipped with a $\zz$-equivariant map to the universal
cover of $\ttt_*$, which is a copy of $\rr$.

The complex $\ttt_L$ has a single vertex.  The link of this
vertex is the sphericalization or octahedralization $\sss(L)$ of $L$.  
It has two vertices $v^+,v^-$ for each vertex $v\in L^0$, where
for any choices of signs $\epsilon_i$, the vertices
$v_0^{\epsilon_0},\ldots,v_n^{\epsilon_n}$ span an $n$-simplex of
$\sss(L)$ if and only if the vertices $v_0,\ldots,v_n$ span
an $n$-simplex of $L$.  In the case when $L$ is itself an
$n$-simplex, $\sss(L)$ is an $n$-sphere, triangulated as the
boundary of the $(n+1)$-dimensional analogue of the octahedron.  
The universal covering $X_L$ of $\ttt_L$
is a CAT(0) cubical complex, on which $A_L$ acts freely cellularly,
wtih one orbit of vertices.  There is an $A_L$-equivariant map
$X_L\rightarrow X_*\cong \rr$ which we view as a height function
on $X_L$; the subgroup $BB_L$ is the subgroup of elements that
act trivially on $X_*$, while each of the standard generators
for $A_L$ acts on $X_*\cong \rr$ as translation by~1.

Provided that $L$ is connected, the group $BB_L$ is generated by
elements indexed by the directed edges of $L$, where the directed edge
$a$ from $x$ to $y$ corresponds to the element $x^{-1}y$ of $BB_L\leq
A_L$.  The two directions of a directed edge correspond to mutually
inverse elements, and for each directed cycle $(a_1,\ldots,a_l)$ and
each integer $n$, the product $a_1^na_2^n\cdots a_l^n$ is the
identity.  It can be shown that these relators, for all cycles and all
non-zero integers~$n$, suffice to present $BB_L$~\cite{dicksleary}.
For directed cycles $(a,b,c)$ of length~3, the relators
$abc=1=a^{-1}b^{-1}c^{-1}$ imply that $a$, $b$~and~$c$ commute and
generate a group isomorphic to $\zz^2$, so for cycles of length~3,
only the relators for $n=\pm 1$ are needed.  See~\cite{dicksleary} for
more details.

Now suppose that $M$ is a connected regular covering of $L$, with
$\pi=\pi(M,L)$ as its group of deck transformations.  In this case
$\pi$ acts on the given presentations for $A_M$ and $BB_M$ by permuting
the generators and the relations.  Thus we can form the semi-direct
products $A_M\semi \pi$ and $BB_M\semi\pi$.  The action of $\pi$ on
$M$ also induces an action of $\pi$ on $\ttt_M$, which permutes the
cells freely except that the vertex is fixed.  In this way $A_M\semi\pi$
is realized geometrically as the group of all self-isomorphisms of
$X_M$ that lift the action of some element of $\pi$ on $\ttt_M$.
If $H$ is a subgroup of $A_M\semi\pi$ that maps isomorphically to $\pi$
under the map $A_M\semi\pi\rightarrow \pi$, then $H$ fixes a vertex of $X_M$,
and no other point of $X_M$.  This follows from the facts that the
action is by isometries, so the geodesic between two fixed points would
also be fixed, and that the group $H$ acts freely on the link of the
vertex that it fixes.  Thus in $A_M\semi\pi$, there is a bijective
correspondence between the vertices of $X_M$ and the subgroups $H$
that map isomorphically to $\pi$.  Furthermore, each such $H$ is
its own normalizer, because the normalizer of $H$ must act on the
fixed point set $X_M^H$ but $H$ is the entire stabilizer of this
set.  Since there is just one $A_M$-orbit of vertices in $X_M$,
all of these subgroups are conjugate in $A_M\semi\pi$.

Now consider the group $BB_M\semi\pi$.  Under the action of
$BB_M\semi\pi$, vertices of different heights lie in different orbits,
while vertices of the same height lie in the same orbit.  
Hence one sees that the conjugacy classes in $BB_M\semi\pi$ of
vertex stabilizers are permuted freely transitively by $A_M/BB_M\cong\zz$.
Choosing for once and for all an equivariant bijection between the
set of conjugacy classes of vertex stabilizers in $BB_M\semi\pi$
and the group $A_M/BB_M\cong \zz$, we can index the conjugacy classes
of vertex stabilizers by $\zz$.  (Equivalently, this amounts to
fixing a choice of splitting map $\pi\rightarrow BB_M\semi\pi$.)  
For $S\subseteq \zz$, let $N(S)$
denote the normal subgroup of $BB_M\semi\pi$ generated by the
stabilizers of the vertices whose height lies in $S$.  The group
$G_L^M(S)$ can be defined as the factor group $BB_M\semi\pi/N(S)$.
A geometric argument (essentially~\cite[lemmas~14.3,14.4]{ufp})
shows that if $H$ is the stabilizer of a vertex of height not in
$S$, then $H\cap N(S)$ is trivial.  The quotient complex $X_M/N(S)$
has vertex links $\sss(L)$ for the vertices of height in $S$ and
$\sss(M)$ for the vertices of height not in $S$.  Since $N(S)$ is
generated by elements that fix a vertex of $X_M$, the quotient
complex $X_M/N(S)$ is simply connected.  Hence by Gromov's criterion
it is CAT(0), and we define it to be $X_L^M(S)$.  The group $G_L^M(S)$
acts on it by isometries, freely except that vertices whose height is
not in $S$ have stabilizer isomorphic to $\pi$.

Since the conjugate $vN(S)v^{-1}$ of $N(S)$ by any $v\in M^0$ is equal
to $N(S+1)$, for general $S$ the subgroup $BB_M$ is the entire
normalizer of $N(S)$ inside $A_M$.  The only exceptions to this are
subsets $S$ that are periodic: if $S+n=S$ for some $n>0$, then the
normalizer of $N(S)$ contains the index $n$ subgroup of $A_M$ which is
the inverse image in $A_M$ of the unique index $n$ subgroup of
$A_M/BB_M\cong \zz$.  If we write $H$ for this subgroup, then the
quotient $H/N(S)$ is a (necessarily split) extension with kernel
$G_L^M(S)$ and infinite cyclic quotient, which we will denote by
$G_L^M(S)\semi n\zz$.  Since $H$ acts cocompactly on $X_M$ (with $n$
orbits of vertices), it follows that $G_L^M(S)\semi n\zz$ acts
cocompactly on $X_L^M(S)=X_M/N(S)$.  This is the group and action
that feature in the statement of Conjecture~\ref{conj:special}.

The presentation that we gave for $BB_M$ gives rise to a presentation
for each group $G_L^M(S)$.  For simplicity, we will focus mainly on
the case when $0\in S$.
Since for $v\in M^0$, we have that $vN(S)v^{-1}=N(S+1)$, the only isomorphism
type that is not covered by this assumption is $G_L^M(\emptyset)=BB_M\semi\pi$.
First consider the case $S=\{0\}$.  Killing the standard copy of $\pi$
inside $BB_M\semi\pi$ has the effect of identifying directed edges of
$M$ that lie in the same $\pi$-orbit.  Hence the group $G_L^M(\{0\})$
has the directed edges of $L$ as its generators.  The relators have a
similar form to the relators in the presentation we described above
for $BB_L$, except that we now have relators of the form
$a_1^na_2^n\cdots a_l^n=1$ for all $n\in \zz$, but only for directed
edge loops in $L$ that lift to loops in $M$ under the covering map.  
With respect to this presentation, one may readily describe a representative
of each conjugacy class of vertex stabilizers.  Fix a vertex $v\in L$ and
a lift $v_0\in M$ of $v$.  For each vertex $v_1$ in the orbit $\pi.v_0$, pick a
directed edge loop $(a_1,\ldots,a_l)$ in $L$ that can be lifted to a path
in $M$ from $v_0$ to $v_1$.  Since the composite of one such loop with
the reverse of another is a directed loop in $L$ that lifts to a loop
in $M$, the group element defined by $a_1^na_2^n\cdots a_l^n$ does not
depend on the choice of the loop, only on $v_0$ and $v_1$.  For each
fixed $n\neq 0$, these elements form a subgroup of $G_L^M(S)$ that is
isomorphic to $\pi$.  Different values of $n$ correspond to different
conjugacy classes of subgroup.

In this way, we obtain a presentation for $G_L^M(S)$ whenever $0\in S$.
The generators are the directed edges of $L$.  For each $n\in S$, we
take the relators $a_1^na_2^n\cdots a_l^n$ for all directed edge loops
$(a_1,\ldots,a_l)$.  For each $n\notin S$, we also take the relators
$a_1^na_2^n\cdots a_l^n$, but only for those directed edge loops in $L$
that lift to loops in $M$.  

This presentation makes clear the functoriality of the group $G_L^M(S)$.
Given any commutative square of simplicial maps in which the vertical
maps are coverings
\[
\begin{matrix}
  M'&\rightarrow &M\\
  \downarrow&&\downarrow\\
  L'&\rightarrow &L\\
\end{matrix}
\]
and any inclusion $S'\subseteq S\subseteq \zz$, there is an induced
homomorphism $G_{L'}^{M'}(S')\rightarrow G_L^M(S)$.  In particular
this applies when $L'\rightarrow L$ is a simplicial map and $M'$
is obtained as a connected component of the pullback of a covering
$M$ of $L$, and when $L'=L$ and $M'$ is a covering of $L$ that
factors through $M$.

For the sake of completeness, we now give some more details about the
presentation for $G_L^M(\emptyset)$ and in particular, we describe
representatives of the conjugacy classes of subgroups isomorphic to
$\pi=\pi(M,L)$.  The generators for $G_L^M(\emptyset)$ will be the
directed edges $a,b,c,\ldots$ of $M$ together with the elements
$g,h,j,\ldots$ of $\pi$.  We view $\pi$ as acting on the
\emph{left}  of $M$ via deck transformations, and for $g\in \pi$
and $a$ a directed edge of $M$, let $g\cdot a$ be the directed
edge obtained by acting on $a$ by $g$.  Thus the relations for
$G_L^M(\emptyset)=BB_M\semi \pi$ are the relations previously
described in the presentation for $BB_M$, the relations that
hold between the elements of $\pi$, and the conjugation relations
which have the form $gag^{-1}=g\cdot a$ for each $g\in \pi$ and
each directed edge $a$.  If $\gamma=(a_1,\ldots,a_l)$ is a
directed edge path in $M$, it will be convenient to introduce
the notation $\gamma[n]$ for the group element $a_1^na_2^n\cdots
a_l^n$.

To construct representatives of the different conjugacy
classes of subgroups isomorphic to $\pi$, we first fix a vertex
$v\in M$.  Next, for each $g\in \pi$, choose a path $\gamma_g$
from $v$ to $g\cdot v$.  If $\gamma'_g$ is another such choice,
then the concatenation $\gamma_g.\overline{\gamma'_g}$ of
$\gamma_g$ with the reverse of $\gamma'_g$ is a closed loop
in $M$.  From this it follows that for each $n\in \zz$,
$\gamma_g[n]=\gamma'_g[n]$, so the group element $\gamma_g[n]$
does not depend on the choice of path $\gamma$.  For each $n\in
\zz$, define a subset $\pi(n)$ of $G_L^M(\emptyset)$ as
\[\pi(n):= \{\gamma_g[n]g\,\,\:\,\,g\in \pi\}.\]
We claim that $\pi(n)$ is a subgroup of $G_L^M(\emptyset)$ that
is isomorphic to $\pi$.  Since $\pi(n)$ maps bijectively to
$\pi$ under the map $G_L^M(\emptyset)\rightarrow \pi$, this
claim will follow provided that $\pi(n)$ is closed under
multiplication.

For any
$g,h\in \pi$, the directed path $g\cdot \gamma_h$ obtained by
applying $g$ to the path $\gamma_h$ is a path from $g\cdot v$ to
$gh\cdot v$.  Hence the concatenation $\gamma_g.(g\cdot\gamma_h)$
is a directed path from $v$ to $gh\cdot v$.  From this it follows
that in $G_L^M(\emptyset)$ 
\[\gamma_g[n]g\gamma_h[n]h= \gamma_g[n](g\gamma_h[n]g^{-1})gh
= (\gamma_g.g\cdot\gamma_h)[n]gh
=\gamma_{gh}[n]gh.\]
Hence $\pi(n)$ is closed under multiplication and is a subgroup
isomorphic to $\pi$ as claimed.  Replacing the vertex $v$ by
another vertex $w\in M$ gives rise to a group that is conjugate
to the group $\pi(n)$; just conjugate by the element $\gamma[n]$,
where $\gamma$ is a path between $v$ and $w$.  To show that the
groups $\pi(n)$ represent all of the different conjugacy classes
one can use a geometric argument.  Alternatively, the description
of the conjugation action of $A_M$ on $BB_M$ in~\cite{dicksleary}
can be used to show directly that the conjugate of $\pi(n)$ by the
vertex~$v$, viewed as one of the generators for $A_M$, is equal to
$\pi(n+1)$.  

The above description of the subgroups $\pi(n)$ of $G_L^M(\emptyset)$
gives a way to present each $G_L^M(S)$ as a quotient of
$G_L^M(\emptyset)$.  To the relations for $G_L^M(\emptyset)$ one
adds the relation $\gamma_g[n]g=1$ for each $n\in S$ and each $g\in
\pi$.

Next we briefly recall some material concerning special cube complexes
from~\cite{hw}.  A {\sl hyperplane} in a non-positively curved cube complex
is an equivalence class of directed edges under the relation generated
by `form the opposite directed edges of a square'.  A hyperplane is
{\sl 2-sided} if it does not contain any pair of directed edges associated
to a single directed edge.  Hyperplanes {\sl intersect} if they contain
directed edges that are adjacent sides of a square.  Two directed
edges {\sl directly osculate} if they are not contained in a square and
share the same terminal vertex.  A hyperplane {\sl self-intersects} if there
is a square containing two directed edges of the hyperplane as
adjacent sides.  A hyperplane {\sl directly self-osculates} if it contains
two directed edges that directly osculate.  Two hyperplanes
{\sl inter-osculate} if they intersect and also contain a pair of directed
edges that directly osculate.

A locally CAT(0) cube complex is
{\sl $A$-special} if its hyperplanes are 2-sided and do not self-intersect,
directly self-osculate or inter-osculate.  The fundamental group of a
finite $A$-special cube complex embeds in a right-angled Artin group and
hence is torsion-free and linear over~$\zz$, which implies that it is
residually finite~\cite[thm.~1.1]{hw}.  A {\sl special} locally CAT(0) cube
complex is similar to an $A$-special complex except that hyperplanes
are not required to be 2-sided.  For each of our complexes $X_L^M(S)$
the height function $X_L^M(S)\rightarrow \rr$ allows one to distinguish
upward and downward pointing directed edges.  For this reason every
hyperplane in $X_L^M(S)/H$ is 2-sided for any $H\leq G_L^M(S)$, and
so $X_L^M(S)/H$ is special if and only if it is $A$-special.  

We close this section with some remarks concerning the profinite
topology on $\zz$.  This is the topology in which the basic open
sets are the periodic subsets.  Each periodic subset is also closed,
and it follows that every open set is a union of periodic sets and
that every closed set is an intersection of periodic sets.  We make
a more precise version of this second statement below.

\begin{lemma}\label{lem:closed}
  Any $S\subseteq \zz$ that is closed in the profinite topology is the
  intersection of a nested sequence $T_1\supseteq T_2\supseteq
  T_3\cdots$ of periodic sets.  Furthermore, we may suppose that
  $S\cap [-n,n]=T_n\cap [-n,n]$.
\end{lemma}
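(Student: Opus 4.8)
The plan is to realise $S$ as the intersection of the complements of finitely-generated-by-periodic open neighbourhoods of each point outside $S$, organised so that the $n$-th stage already agrees with $S$ on $[-n,n]$. First I would recall the structure of the profinite topology: a base of open sets is given by the cosets $a+n\zz$, so every open set is a union of such cosets, and a set is closed exactly when its complement is such a union. Since $\zz\setminus S$ is open, for each $m\in\zz\setminus S$ we may pick an integer $n_m>0$ and a residue $a_m$ with $m\in a_m+n_m\zz\subseteq \zz\setminus S$; call this periodic set $U_m$. Then $\zz\setminus S=\bigcup_{m\notin S}U_m$, so $S=\bigcap_{m\notin S}(\zz\setminus U_m)$, and each $\zz\setminus U_m$ is periodic (a union of the other residue classes mod $n_m$). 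This already exhibits $S$ as an intersection of periodic sets; the remaining work is to make the family nested and to arrange the agreement on $[-n,n]$.

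For the refinement, I would build the sequence $(T_n)$ by induction. At stage $n$, the set $(\zz\setminus S)\cap[-n,n]$ is finite, say equal to $\{m_1,\dots,m_k\}$; for each $m_i$ in this finite set choose a periodic $U_{m_i}$ as above avoiding $S$, and set
\[
T_n' \;=\; \bigcap_{i=1}^{k}\bigl(\zz\setminus U_{m_i}\bigr),
\]
a finite intersection of periodic sets, hence periodic, with $S\subseteq T_n'$ and $T_n'\cap[-n,n]=S\cap[-n,n]$ (the inclusion $\subseteq$ holds because every $m_i$ has been excluded, and $\supseteq$ because no element of $S$ lies in any $U_{m_i}$). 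To make the sequence nested, define $T_n=T_1'\cap T_2'\cap\cdots\cap T_n'$; this is still periodic (finite intersection of periodic sets), still contains $S$, and $T_{n+1}\subseteq T_n$ by construction. Finally $T_n\cap[-n,n]=S\cap[-n,n]$: for $j\le n$ we have $T_j'\cap[-n,n]\supseteq T_j'\cap[-j,j]=S\cap[-j,j]$ on the interval $[-j,j]$, but we need agreement on all of $[-n,n]$, so I would instead take $T_n'$ at stage $n$ to exclude \emph{all} of $(\zz\setminus S)\cap[-n,n]$ (as written above, with $[-n,n]$ not $[-j,j]$), which makes $T_n'\cap[-n,n]=S\cap[-n,n]$ directly; then since $T_{n'}'\cap[-n,n]\supseteq T_{n'}'\cap[-n',n']\cap[-n,n]$ is not automatically $S$ for $n'<n$, one checks instead that $T_n'\subseteq T_{n'}'$ need not hold, which is why the running-intersection definition $T_n=\bigcap_{j\le n}T_j'$ is used: on $[-n,n]$ each $T_j'$ with $j\le n$ contains $S\cap[-n,n]$ only on $[-j,j]$, so this still needs care.

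The main obstacle is exactly this last bookkeeping point: ensuring simultaneously that the sequence is nested \emph{and} that $T_n$ agrees with $S$ on the full interval $[-n,n]$, since a naive running intersection only controls the behaviour on the shorter intervals $[-j,j]$. The clean fix, which I would adopt, is to choose at stage $n$ a single periodic set $T_n'$ that (i) contains $S$, (ii) agrees with $S$ on $[-n,n]$ — obtained by intersecting finitely many $\zz\setminus U_m$ over $m\in(\zz\setminus S)\cap[-n,n]$ — and \emph{also} (iii) refines the previous one, by simply intersecting with $T_{n-1}$: set $T_n=T_n'\cap T_{n-1}$. Since $T_{n-1}\cap[-n-1,n-1]=S\cap[-n-1,n-1]$ already holds by induction and we only claim the weaker $T_n\cap[-n,n]=S\cap[-n,n]$, and since $T_n'\cap[-n,n]=S\cap[-n,n]$ while $T_{n-1}\supseteq S$, the intersection $T_n'\cap T_{n-1}$ meets $[-n,n]$ in exactly $S\cap[-n,n]$, as required; nestedness $T_n\subseteq T_{n-1}$ is immediate. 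Periodicity is preserved throughout because we only ever take finite intersections of periodic sets, and $\bigcap_n T_n=S$ follows since any $m\notin S$ lies outside $T_{|m|}$. $\qed$
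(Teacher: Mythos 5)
Your proof is correct and takes essentially the same route as the paper's: both cover each point of $\zz\setminus S$ by a periodic (coset) neighbourhood disjoint from $S$, pass to complements, and take growing finite intersections so that $T_n$ excludes every element of $(\zz\setminus S)\cap[-n,n]$ --- the paper just enumerates $\zz\setminus S$ by increasing absolute value and sets $T_n=\bigcap_{i=1}^{2n+1}F_i$, which makes nesting and agreement on $[-n,n]$ automatic, whereas you enforce nesting by intersecting with $T_{n-1}$. The worry in your middle paragraph is spurious (agreement on $[-n,n]$ needs only $T_n'\cap[-n,n]=S\cap[-n,n]$ together with $S\subseteq T_{n-1}$), and your final paragraph resolves it correctly.
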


\begin{proof}
  Let $n_1,n_2,n_3,\ldots$ be the elements of $\zz-S$, enumerated so
  that $|n_i|\leq |n_j|$ whenever $i<j$.  Since $S$ is closed, for
  each $i$ we can find an open set $O_i$ with $n_i\in O_i$ and $S\cap
  O_i=\emptyset$.  Since the periodic sets form a basis for the
  topology, we may suppose in addition that $O_i$ is periodic.  If 
  we let $F_i=\zz-O_i$ then $F_i$ is periodic, $S\subseteq F_i$, and
  $n_i\notin F_i$.  From this it follows that $\bigcap_i F_i=S$.  If
  we define $T_n=\bigcap_{i=1}^{2n+1}F_i$ then $T_n$ has the
  properties claimed in the statement.
\end{proof}

The natural numbers $\nn\subset \zz$ is an easy example of a subset
that is far from being either open nor closed: the only closed set
that contains $\nn$ is $\zz$ and the only open set contained in $\nn$
is the empty set.  We give a construction of a large collection of
closed subsets, starting with a G\"odel numbering $\phi$ of the finite
subsets of $\nn$.  For $F$ a finite subset of $\nn$, define
\[\phi(F)=\sum_{n\in F} 10^n,\]
with the usual convention that the empty sum is~0.  
The image of $\phi$ is the subset $T(\nn)$ consisting of all positive
integers all of whose decimal digits are equal to 0~or~1.  Now for any
$S\subseteq \nn$, define $T(S)\subseteq\zz$ to be the image under $\phi$ of the
finite subsets of $S$; equivalently $T(S)$ is the positive integers
with digits $\{0,1\}$, where the $n$th digit is~0 when $n\notin S$.

\begin{proposition}\label{prop:tsclosed}
  For each $S\subseteq \nn$, the set $T(S)$ is closed.  The set
  $T(S)$ (resp.~$\nn-T(S)$) is recursively enumerable if and only
  if $S$ (resp.~$\nn-S$) is.
  \end{proposition}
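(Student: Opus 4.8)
The plan is to separate the statement into two independent claims: first that $T(S)$ is always closed in the profinite topology, and second the recursion-theoretic equivalences. For the closedness claim I would argue directly that the complement $\zz - T(S)$ is open, i.e.\ that every integer $m \notin T(S)$ lies in a periodic set disjoint from $T(S)$. There are three kinds of such $m$: negative integers (or $0$), positive integers whose decimal expansion contains a digit $\geq 2$, and positive integers whose expansion has digits in $\{0,1\}$ but with a $1$ in some position $n \notin S$. In the first case, $T(S) \subseteq \nn_+$ so the periodic set of non-positive integers works, or more simply note $T(S)$ meets only finitely many residues modulo any large power of $10$. The cleanest uniform approach: given $m \notin T(S)$, let $n$ be the largest index such that $10^n \leq m$ (so $m < 10^{n+1}$), and consider the arithmetic progression $m + 10^{n+1}\zz$. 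Every element of this progression that is positive and at most... — actually the decisive observation is that membership of a positive integer $k$ in $T(S)$ depends only on the digits of $k$, and one can detect "bad" digit patterns using congruences: for instance, the integers whose $n$th decimal digit is $\geq 2$, together with those whose $n$th digit is $1$ for a fixed $n \notin S$, each form a periodic set (a finite union of residue classes mod $10^{n+1}$, intersected with sign conditions that are themselves unions of arithmetic progressions up to a harmless finite discrepancy). Since any $m \notin T(S)$ falls into at least one of these periodic "bad" sets, and each such set is disjoint from $T(S)$, we conclude $\zz - T(S)$ is open, hence $T(S)$ is closed.

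For the recursion-theoretic part I would argue the equivalence $T(S)$ r.e.\ $\iff$ $S$ r.e., and separately $\nn - T(S)$ r.e.\ $\iff$ $\nn - S$ r.e. The map $\phi$ is a primitive recursive injection from finite subsets of $\nn$ to $\nn_+$, and its inverse (reading off which digits of a given positive integer with digits in $\{0,1\}$ are $1$) is computable, with the set $T(\nn)$ of valid digit-strings being recursive. Given a procedure enumerating $S$, one enumerates all finite subsets of the enumerated elements and applies $\phi$, yielding an enumeration of $T(S)$; conversely, $n \in S$ iff $10^n \in T(S)$ (here using the "only" clause: the $n$th digit of $10^n$ is $1$, so $10^n \in T(S)$ exactly when $n \in S$), so an enumeration of $T(S)$ lets us decide membership of each singleton power of $10$ in the limit, enumerating $S$. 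The complement version is symmetric: $n \in \nn - S \iff 10^n \in \nn - T(S)$, and $k \in \nn - T(S)$ is semi-decided by either detecting a bad digit (recursive) or finding, via an enumeration of $\nn - S$, an index $n$ with $k$'s $n$th digit equal to $1$ and $n \in \nn - S$.

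I expect the only genuinely fiddly step to be verifying carefully that the "bad digit sets" really are periodic subsets of $\zz$ rather than just of $\nn$ — the sign condition $k > 0$ has to be folded in, and periodic sets are required to satisfy $A + n = A$ exactly, not just eventually. The resolution is that one does not need the bad set itself to be periodic; one only needs, for each fixed $m \notin T(S)$, \emph{some} periodic superset of $\{m\}$ avoiding $T(S)$, and here the full two-sided progression $m + 10^{N}\zz$ for $N$ large enough that $10^N > |m|$ does the job: its positive elements below $10^{N}$ is just $\{m\}$ if $m>0$, but its other positive elements are $\geq 10^N$ and all share $m$'s low-order digits, which already contain the witnessing bad pattern, so none of them lie in $T(S)$ either — while its non-positive elements are automatically outside $T(S) \subseteq \nn_+$. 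Thus the periodic set $m + 10^N\zz$ is entirely disjoint from $T(S)$, which is all that closedness requires. Everything else is a routine unwinding of the definition of $\phi$ and standard closure properties of r.e.\ sets.
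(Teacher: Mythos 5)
Your recursion-theoretic argument and your treatment of positive integers outside $T(S)$ are correct and essentially the paper's own proof (the paper also enumerates $\nn-T(S)$ by dovetailing an enumeration of $\nn-S$ with the digit test, and uses $10^n\in T(S)\iff n\in S$ for both converses). The genuine gap is in the closedness claim for integers $m\le 0$ — exactly the points your final ``resolution'' excludes with its proviso ``if $m>0$''. Closedness is a statement about subsets of $\zz$, and every negative integer lies outside $T(S)$, so each one needs a periodic neighbourhood disjoint from $T(S)$; neither of your two attempts supplies one. ``The periodic set of non-positive integers'' does not exist: a set with $A+n=A$ for some $n>0$ is unbounded above, so no nonempty periodic (or open) set is contained in the non-positive integers — this is the same phenomenon as the paper's remark that the only open subset of $\nn$ is empty. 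And the uniform progression argument fails for $m<0$, because the positive elements of $m+10^N\zz$ are $10^Nk-|m|$, whose last $N$ digits are those of $10^N-|m|$, not of $|m|$: concretely, for $m=-90$ and $N=2$ (so $10^N>|m|$), the progression $-90+100\zz$ contains $10=\phi(\{1\})$, which lies in $T(S)$ whenever $1\in S$. The earlier parenthetical claim that the sign condition differs from a union of progressions by ``a harmless finite discrepancy'' is false for the same reason — the discrepancy is infinite.

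The hole is easily patched: for $m<0$ choose $N$ with $10^N>2|m|$; then every positive element $10^Nk-|m|$ of the progression carries the digits of $10^N-|m|\ge 5\cdot 10^{N-1}$ in its last $N$ places, in particular a digit $\ge 5$, so it lies outside $T(\nn)\supseteq T(S)$, and $0\notin m+10^N\zz$ since $0<|m|<10^N$. Alternatively you can avoid the case split altogether by arguing as the paper does, exhibiting $T(S)$ directly as an intersection of periodic sets, namely $T(S)=\bigcap_n F_n$ with $F_n=(T(S)\cap[0,2\cdot 10^n])+10^{n+1}\zz$: the inclusion $T(S)\subseteq F_n$ holds because truncating an element of $T(S)$ to its last $n+1$ digits gives an element of $T(S)$ of size less than $2\cdot 10^n$, and the reverse inclusion of the intersection follows for size reasons once $10^{n+1}$ exceeds $|x|+2\cdot 10^n$. (A cosmetic point: with the paper's definition $0=\phi(\emptyset)\in T(S)$, so $T(S)\not\subseteq\nnp$; this does not affect any of the above.)
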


\begin{proof}
  For $n\geq 0$, let $F_n=(T(S)\cap [0,2.10^n])+10^{n+1}\zz$.  Each
  $F_n$ is periodic and $T(S)=\bigcap_n F_n$, which implies that
  $T(S)$ is closed.  The definition of $T(S)$ gives a recursive
  procedure for computing $T(S)$ from $S$, showing that $T(S)$ is
  recursively enumerable when $S$ is.  Computing $\nn-T(S)$ from
  $\nn-S$ is slightly more complicated.  To compute $\nn-T(S)$, fix an
  integer $N$ and run for $N$ steps an algorithm to generate elements
  of $C:=\nn-S$, keeping a list $C(N)$ of the elements of $C$ so
  obtained.  Then output every integer in the range $[0,N]$ that
  either has a decimal digit not equal to 0~or~1, or has its $n$th
  decimal digit equal to~1 for some $n\in C(N)$.  Now repeat this
  procedure for increasing values of $N$.  For the converse
  statements, note that $10^n\in T(S)$ if and only if $n\in S$.  Thus
  a recursive enumeration of $T(S)$ (resp.~$\nn-T(S)$) gives rise to a
  recursive enumeration of $S$ (resp.~$\nn-S$).
\end{proof}

\section{A set-valued invariant}\label{sec:three}

In this section we prove Theorem~\ref{thm:properties} using a
set-valued invariant of a group and a sequence of elements,
$\calr(G,\bg)$ that was introduced in~\cite{ufp}.  But first
we prove Proposition~\ref{prop:torfree}.  

\begin{proof} (Proposition~\ref{prop:torfree})
  $G_L^M(S)$ acts on the CAT(0) cubical complex
  $X_L^M(S)$, freely except that vertices whose height is
  not in $S$ have stabilizer isomorphic to $\pi(M,L)$.
  Any action of an element of finite order on any CAT(0)
  space must fix a point, and the claim follows.
\end{proof}

For a group $G$ and a finite sequence $\bg=(g_1,\ldots, g_l)$ of
elements of $G$, the invariant $\calr(G,\bg)$ is the subset of
$\zz$ defined by
\[\calr(G,\bg)=\{n\in \zz\,\,:\,\,g_1^ng_2^n\cdots g_l^n=1\}.\]
In \cite[lemma~15.3]{ufp}, it was shown that in the case when $G=G_L^M(S)$
and $\bg$ is the sequence of generators spelling out an edge
loop in $L$ that does not lift to a loop in $M$, then
$\calr(G,\bg)=S$.  (This was stated only in the case when $M$ is
the universal cover, but the same proof holds in general.)  
We require another property:

\begin{proposition}\label{prop:gfinite}
  If $G$ is finite, $\calr(G,\bg)$ is periodic.  If $G$ is residually
  finite, $\calr(G,\bg)$ is closed in the profinite topology on $\zz$.  
\end{proposition}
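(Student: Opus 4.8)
The plan is to prove the finite case directly and then obtain the residually finite case as an intersection of periodic sets, using the elementary observation that $\calr$ is monotone under quotient maps: a surjection $q\colon G\twoheadrightarrow Q$ carries the relation $g_1^n\cdots g_l^n=1$ in $G$ to the corresponding relation in $Q$.

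First I would dispose of the finite case. Writing $\bg=(g_1,\dots,g_l)$, let $e=|G|$ (or the exponent of $G$), so that $g_i^{\,e}=1$ for every $i$. Then the function $n\mapsto g_1^n g_2^n\cdots g_l^n$ from $\zz$ to $G$ is unchanged under $n\mapsto n+e$, and hence so is its preimage of the identity; that is, $\calr(G,\bg)+e=\calr(G,\bg)$, so $\calr(G,\bg)$ is periodic with period dividing $e$.

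For the residually finite case, set $q(\bg)=(q(g_1),\dots,q(g_l))$ for a surjection $q\colon G\twoheadrightarrow Q$. Applying $q$ to $g_1^n\cdots g_l^n=1$ shows $\calr(G,\bg)\subseteq\calr\bigl(Q,q(\bg)\bigr)$. Conversely, if $n$ lies in $\calr(Q,q(\bg))$ for every finite quotient $q$ of $G$, then the element $g_1^n\cdots g_l^n$ has trivial image in every finite quotient of $G$, hence is trivial in $G$ by residual finiteness, so $n\in\calr(G,\bg)$. Therefore $\calr(G,\bg)=\bigcap_q \calr(Q,q(\bg))$, the intersection taken over all finite quotients $q\colon G\twoheadrightarrow Q$. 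By the finite case each $\calr(Q,q(\bg))$ is periodic, hence closed in the profinite topology as recorded in Section~\ref{sec:back}, and an intersection of closed sets is closed; this gives the second assertion. (Equivalently, in neighbourhood form: given $n\notin\calr(G,\bg)$, choose a finite quotient $q$ separating $g_1^n\cdots g_l^n$ from $1$; then $\zz\setminus\calr(Q,q(\bg))$ is a periodic, hence open, neighbourhood of $n$ disjoint from $\calr(G,\bg)$.) I do not expect any serious obstacle: the argument is a direct unwinding of the definitions of periodicity and of residual finiteness, the only mild subtlety being the verification that the intersection over all finite quotients really recovers $\calr(G,\bg)$ rather than something larger, which is precisely where residual finiteness is used.
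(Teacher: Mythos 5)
Your proof is correct and follows essentially the same route as the paper: period the exponent (the paper also notes finite exponent suffices) for the finite case, and expressing $\calr(G,\bg)$ as an intersection of the periodic sets $\calr(Q,q(\bg))$ over finite quotients, using residual finiteness to show the intersection is not larger. No gaps.
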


\begin{proof}
  For the first statement, it suffices to make the 
  weaker assumption that $G$ has finite exponent, $m$ say.
  In this case for any $n$,  $g_1^{n+m}g_2^{n+m}\cdots g_l^{n+m}= g_1^ng_2^n
  \cdots g_l^n$, from which it follows that $\calr(G,\bg)=\calr(G,\bg)+m$.

  For the second statement, whenever $f:G\rightarrow Q$ is a homomorphism
  from $G$ to a finite group, the first statement implies that
  $\calr(Q,f(\bg))$ is periodic.  Since $G$ is assumed to be 
  residually finite, $g_1^n\cdots g_l^n$ is equal to the identity
  if and only if its image under each such $f:G\rightarrow Q$ is.
  Hence $\calr(G,\bg)$ is the intersection of a family of periodic
  sets of the form $\calr(Q,f(\bg))$.  This proves the claim.  
\end{proof} 

\begin{proof} (Theorem~\ref{thm:properties})
  Suppose firstly that $G_L^M(S)$ is virtually torsion-free.  
  If $S=\zz$ or $\pi(M,L)$ is torsion-free, then we have already seen
  that $G_L^M(S)$ is torsion-free.  Thus we may suppose that $S\neq \zz$
  and that $\pi(M,L)$ contains some torsion.  Since $S\neq
  \zz$, $G_L^M(S)$ contains subgroups isomorphic to $\pi(M,L)$, and so
  $\pi(M,L)$ must be virtually torsion-free as claimed.  
  Now let $\gamma=(a_1,\ldots,a_l)$ be
  a directed edge loop in $L$ that represents a non-trivial torsion element
  in $\pi(M,L)$.  Thus $\gamma$ does not lift to a closed loop in
  $M$ but there is some $m>1$ the iterated loop $\gamma^m$ does
  lift to a closed loop in $M$.  Now suppose that $f:G_L^M(S)\rightarrow Q$
  is a homomorphism to a finite group with torsion-free kernel.  For each
  $n$, $a_1^n\cdots a_l^n$ is a torsion element of $G_L^M(S)$, and this
  element is equal to the identity if and only if $n\in S$.  Since the
  kernel of $f$ is torsion-free, it follows that
  $\calr(Q,(f(a_1),\ldots,f(a_l))=S$, and so by
  Proposition~\ref{prop:gfinite}, $S$ must be periodic.  

  Next suppose that $G_L^M(S)$ is residually finite.  If either
  $S=\zz$ or $\pi(M,L)$ is trivial, then $G_L^M(S)$ is the
  Bestvina-Brady group $BB_L$, which is residually finite.  Thus
  we may assume that $\pi(M,L)\neq \{1\}$ and that $S\neq \zz$.
  Since $S\neq \zz$, $G_L^M(S)$ contains subgroups isomorphic to
  $\pi(M,L)$ and so $\pi(M,L)$ must be residually finite as claimed.
  Now let $(a_1,\ldots,a_l)$ be a directed edge loop in $L$ that
  does not lift to a loop in $M$.  The element $a_1^n\cdots a_l^n
  \in G_L^M(S)$ is equal to the identity if and only if $n\in S$.
  Hence $S=\calr(G_L^M(S),(a_1,\ldots,a_l))$ must be closed in
  the profinite topology by Proposition~\ref{prop:gfinite}.

  \end{proof} 
  
\section{Reduction to finite covers and periodic sets}\label{sec:four} 

The two theorems in this section,
Theorems~\ref{thm:fourone}~and~\ref{thm:fourtwo}, together imply
Theorem~\ref{thm:reduction}.

\begin{theorem}\label{thm:fourone}
  Suppose that $S$ is closed in the profinite topology.  For any
  non-identity element $g\in G_L^M(S)$ there is periodic $T\supseteq
  S$ so that the image of $g$ in $G_L^M(T)$ is not the identity.
\end{theorem}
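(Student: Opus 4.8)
The plan is to use the presentation of $G_L^M(S)$ (for $0 \in S$, after translating the given element if necessary) together with Lemma~\ref{lem:closed} and the geometry of the cube complexes $X_L^M(T)$. Recall that $G_L^M(S)$ is the quotient of $BB_M \semi \pi$ by $N(S)$, and that for a periodic set $T \supseteq S$ we have a natural surjection $G_L^M(S) \twoheadrightarrow G_L^M(T)$ (this is the functoriality already described in the excerpt, coming from the inclusion $S \subseteq T$ and the identity covering). So the task is: given $g \neq 1$ in $G_L^M(S)$, produce a single periodic $T \supseteq S$ with $g \notin \ker(G_L^M(S) \to G_L^M(T))$. The kernel of that map is $N(T)/N(S)$ (image of $N(T)$), so equivalently I must find periodic $T \supseteq S$ with $g \notin N(T)$ when $g$ is lifted to $BB_M \semi \pi$.

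The key step is to reinterpret membership in $N(T)$ geometrically in $X_M$. An element lies in $N(T)$ precisely when it acts on $X_M$ in a way that is ``supported on the heights in $T$'': more usefully, $g \notin N(T)$ can be detected by looking at the geodesic in $X_M/N(S) = X_L^M(S)$ from a base vertex $x_0$ to $g x_0$, and asking whether this geodesic survives (stays a geodesic, or at least a path between distinct points) under the quotient map $X_L^M(S) \to X_L^M(T)$. Since $g \neq 1$ and $G_L^M(S)$ acts on $X_L^M(S)$ with vertex stabilizers isomorphic to $\pi$, either $g x_0 \neq x_0$, in which case I take a geodesic $\gamma$ from $x_0$ to $g x_0$; or $g x_0 = x_0$ and $g$ lies in the (non-trivial) stabilizer $\pi$ of $x_0$, in which case I choose $x_0$ to have height in $S$ — no, I cannot, so instead I choose a different vertex to base at, or I move to a neighbouring vertex and use the conjugation action to reduce to the case $g x_0 \neq x_0$. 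In all cases I reduce to: a finite geodesic segment $\gamma$ in $X_L^M(S)$ whose endpoints are distinct, involving only finitely many heights, say all lying in $[-n, n]$.

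Now apply Lemma~\ref{lem:closed}: choose periodic $T = T_n \supseteq S$ with $S \cap [-n,n] = T_n \cap [-n,n]$. The point of this choice is that the quotient maps $X_M \to X_L^M(S) \to X_L^M(T)$ only differ in how they treat vertices of height outside $[-n,n]$ (those of height in $T \setminus S$ get their links collapsed from $\sss(M)$ to $\sss(L)$, i.e. the $\pi$-stabilizer is killed), so on the subcomplex of $X_L^M(S)$ spanned by vertices of height in $[-n,n]$ — which contains $\gamma$ — the map to $X_L^M(T)$ is an isomorphism onto its image. Hence $\gamma$ maps to a path between distinct vertices in $X_L^M(T)$, which shows the image of $g$ moves $x_0$, hence is non-trivial; or in the stabilizer case, one argues that the image of the stabilizer element is still non-trivial because killing $N(T)$ only affects stabilizers at heights in $T \setminus S \subseteq \zz \setminus [-n,n]$, away from $x_0$.

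The main obstacle I anticipate is the case $g x_0 = x_0$, i.e. when $g$ lies in a vertex stabilizer $\cong \pi$ — here there is no geodesic to track, and I must instead use that $g$, as a non-trivial element of $\pi$, survives in the stabilizer of the corresponding vertex in $X_L^M(T)$ (which is still $\cong \pi$, since that vertex's height is in $S \subseteq T$... but wait, one must be careful: $g$ is a \emph{specific} element of $BB_M \semi \pi$, a product $\gamma_h[m] h$ in the notation of the excerpt, and one must check it genuinely is not in $N(T)$). Handling this cleanly may require invoking the structure result ``$H \cap N(T)$ is trivial for $H$ the stabilizer of a vertex of height not in $T$'' together with a translation so that $g$'s fixed vertex has height just outside the window, or else a direct argument with the explicit presentation relators $\gamma_h[m]h = 1$ for $m \in T$. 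A secondary technical point is confirming that finitely many heights suffice, i.e. that a geodesic segment in the CAT(0) cube complex between two vertices only passes through cubes whose vertices have heights in the interval spanned by the heights of the endpoints — this should follow from the convexity of height-sublevel and superlevel sets (hyperplanes of $X_L^M(S)$ separate it into halfspaces compatible with the height function), but it needs to be stated.
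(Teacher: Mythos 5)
Your overall strategy is the same as the paper's: normalise so that $0\in S$ (after discarding the trivial case where $S$ is already periodic, e.g.\ $S=\emptyset$), base at a vertex $x_0$ of height $0$, use Lemma~\ref{lem:closed} to pick a periodic $T_n\supseteq S$ with $T_n\cap[-n,n]=S\cap[-n,n]$ and $n$ large compared with $g$, and argue that the geodesic $\gamma$ from $x_0$ to $gx_0$ survives under $f_n:X_L^M(S)\rightarrow X_L^M(T_n)$. However, the step you use to justify survival is genuinely wrong: the restriction of $f_n$ to the subcomplex spanned by vertices of height in $[-n,n]$ is \emph{not} in general an isomorphism onto its image. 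Indeed, take a vertex $v$ of height $m\in T_n\setminus S$ (so $|m|>n$) and a non-identity element $\sigma$ of its stabilizer. Then $\sigma$ lies in the kernel of $G_L^M(S)\rightarrow G_L^M(T_n)$, but since vertices of height $0\in S$ have trivial stabilizer, $\sigma$ moves every height-$0$ vertex; so $x$ and $\sigma x$ are distinct vertices of the window with the same image. The statement that is true, and that the paper uses, is local rather than global: because $\gamma$ misses every vertex of height in $T_n\setminus S$ (the only points where the passage from $X_L^M(S)$ to $X_L^M(T_n)$ changes the link, from $\sss(M)$ to $\sss(L)$), the map $f_n$ is a local isometry along $\gamma$, so $f_n\circ\gamma$ is a local geodesic, hence a geodesic in the CAT(0) complex $X_L^M(T_n)$; therefore $f_n(x_0)\neq f_n(gx_0)$ and $f_n(g)\neq 1$.

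Two further points. The case $gx_0=x_0$ that you struggle with simply does not arise if you commit to taking $x_0$ of height $0\in S$ at the outset: such vertices have trivial stabilizer, so every non-identity $g$ moves $x_0$ — this is exactly why the paper normalises $0\in S$ and works at height $0$, and your ``no, I cannot'' is mistaken. Finally, your appeal to convexity of height sublevel/superlevel sets is false (already in $X_L$ for $L$ two disjoint vertices, a tree, a geodesic between two height-$0$ vertices can pass through a height-$1$ vertex), but it is also unnecessary: the geodesic segment is compact, so it meets only finitely many cubes and hence only finitely many heights, which is all you need to choose $n$ for the given $g$. The sharper estimate via~\cite{kls}, that the heights met by $\gamma$ are bounded by a constant times the word length of $g$, is only needed for the quantitative formulation in the paper's proof.
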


\begin{proof}
  This argument is based on one in~\cite{kls}.  If $S$ is periodic
  there is nothing to prove.  If not, then $S\neq \emptyset$ and
  we may assume without loss of generality that $0\in S$, and we
  may take as generators for $G_L^M(S)$ the directed edges of $L$. 
  By~Lemma~\ref{lem:closed} there is a
  nested sequence $T_1\supseteq T_2\supseteq \cdots$ of periodic sets
  with intersection $S$ and such that $T_n\cap [-n,n]=S\cap[-n,n]$.
  It will suffice to show that the word length of any non-identity
  element of $G_L^M(S)$ that maps to the identity in $G_L^M(T_n)$
  tends to infinity with $n$.

  Since $0\in S$, the group $G_L^M(S)$ acts freely on the vertices of
  height~$0$ in $X_L^M(S)$, and the Cayley graph for $G_L^M(S)$ embeds
  in the height $0$ subset of $X_L^M(S)$, with each generator mapping
  to the diagonal of a square.  Fix $v_o$ a vertex of height $0$ in
  $X_L^M(S)$ and let $\gamma$ be the geodesic arc from $v_0$ to
  $gv_0$, and let $f_n$ denote the map $f_n:X_L^M(S) \rightarrow
  X_L^M(T_n)$.  if $f_n\circ\gamma$ is a geodesic arc in $X_L^M(T_n)$,
  then $f_n(gv_0)\neq f(v_0)$, which implies that $f_n(g)\neq 1$.
  This will happen unless $\gamma$ passes through a vertex of
  $X_L^M(S)$ of height in $T_n-S$.  By the argument used
  in~\cite[lemma~3.2]{kls}, this implies that the word length of $g$ is
  strictly greater than $n\sqrt{2/(d+1)}$, where $d$ is the dimension
  of $L$.
\end{proof} 

\begin{theorem}\label{thm:fourtwo}
  If $\pi(M,L)$ is residually finite, then for any non-identity
  element $g\in G_L^M(S)$, there is a finite regular cover
  $N\rightarrow L$ lying between $M$ and $L$ so that the image of $g$
  in $G_L^N(S)$ is not the identity.
\end{theorem}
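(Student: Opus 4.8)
The plan is to use the functoriality of the construction together with the presentations described in Section~\ref{sec:back}. Given a non-identity $g\in G_L^M(S)$, I would first dispose of the easy cases: if $\pi(M,L)=\{1\}$ then $M=L$ and there is nothing to prove, so assume $\pi=\pi(M,L)$ is a non-trivial residually finite group. For any finite-index normal subgroup $K\triangleleft \pi$, the corresponding cover $N_K\to L$ (the quotient of $M$ by the action of $K$) is a finite regular cover lying between $M$ and $L$, with deck group $\pi/K$, and by functoriality there is a surjection $q_K:G_L^M(S)\to G_L^{N_K}(S)$. The task is to choose $K$ so that $q_K(g)\neq 1$. Since $\pi$ is residually finite and $\{1\}=\bigcap_K K$ over all such $K$, the natural guess is that $\bigcap_K \ker q_K$ is trivial, which would finish the proof; the content is to verify this.

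The mechanism I would use is the explicit presentation of $G_L^M(S)$ as a quotient of $G_L^M(\emptyset)=BB_M\semi\pi$ given near the end of Section~\ref{sec:back}: one kills the elements $\gamma_g[n]g$ for $n\in S$, $g\in\pi$. Likewise $G_L^{N_K}(S)$ is the analogous quotient of $BB_{N_K}\semi(\pi/K)$. The covering $M\to N_K$ induces $BB_M\to BB_{N_K}$, hence $BB_M\semi\pi\to BB_{N_K}\semi(\pi/K)$, and this map is compatible with the relations being killed, giving $q_K$. Writing $g$ as a word $w$ in the generators of $G_L^M(\emptyset)$ (directed edges of $M$ together with elements of $\pi$), I would analyse when $q_K(g)=1$: this happens precisely when $w$ lies in the normal closure, inside $BB_{N_K}\semi(\pi/K)$, of the defining relators of $G_L^{N_K}(S)$. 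As $K$ shrinks, $N_K$ approximates $M$ — in the direct limit $\varinjlim_K BB_{N_K}\semi(\pi/K)$ one recovers $BB_M\semi\pi$, and the relators $\gamma_g[n]g$ ($n\in S$) of $G_L^{N_K}(S)$ are exactly the images of those of $G_L^M(S)$. So a relation witnessing $q_K(g)=1$ for \emph{all} $K$ would, in the limit, witness $g=1$ in $G_L^M(S)$, a contradiction. Making this limiting argument precise is the crux: I would phrase it as saying that the compatible system $\{q_K\}$ induces an isomorphism $G_L^M(S)\xrightarrow{\ \sim\ }\varprojlim_K G_L^{N_K}(S)$ only after also checking injectivity, which is precisely the statement to be proved, so instead I would argue directly with words and normal closures rather than invoking an abstract limit.

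More concretely, I would run the following finite-support argument. Fix a finite word $w$ in the generators of $BB_M\semi\pi$ representing $g$. Only finitely many elements of $\pi$, say $g_1,\dots,g_t$, occur in $w$ or are needed to express the finitely many directed edges of $M$ appearing in $w$ in terms of a fixed finite set of edges and deck translates. Suppose, for contradiction, that $q_K(g)=1$ for every $K$. Each such vanishing is witnessed by a finite product of conjugates of relators; but there are subtleties because as $K$ varies the relators change. The clean way: it suffices to find a \emph{single} $K$ with $q_K(g)\neq 1$. Note $\ker(BB_M\semi\pi\to BB_{N_K}\semi(\pi/K))$ is the normal closure of $\{$ $\gamma_g[n]g$ $:$ $n\in\zz$, $g\in K\}$ — because modding $M$ by $K$ kills exactly the vertex stabilisers indexed by $K\le\pi$ — together with the relations identifying edges of $M$ in the same $K$-orbit. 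Compose with the further quotient to $G_L^{N_K}(S)$: then $\ker q_K$ is the normal closure in $G_L^M(S)$ of the images of $\gamma_g[n]g$ for $n\in\zz$, $g\in K$. Since $\bigcap_K K=\{1\}$ and $\gamma_1[n]\cdot 1=1$ already holds in $G_L^M(S)$, one checks that $\bigcap_K \ker q_K=\{1\}$: any element in every $\ker q_K$ must, by a normal-form / geodesic argument in the CAT(0) complex $X_L^M(S)$, already be trivial — here one uses that the extra relators $\gamma_g[n]g$ for $g\neq 1$ correspond to collapsing vertex stabilisers $\pi(n)\cong\pi$, and collapsing only those over an intersection of finite-index subgroups equal to $\{1\}$ changes nothing.

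The main obstacle, as flagged, is this last injectivity/limiting step — showing $\bigcap_K\ker q_K=\{1\}$. The reason I expect it to be tractable is that each $\ker q_K$ is generated by elements supported on the vertex stabilisers $\pi(n)$ (for the relevant copies indexed by $K$), and these stabilisers intersect $G_L^M(S)$ in controlled ways (the excerpt notes $H\cap N(S)$ trivial for heights not in $S$). I would argue geometrically: represent $g$ by the geodesic from $x_0$ to $gx_0$ in $X_L^M(S)$; this geodesic meets finitely many vertices, whose stabilisers are finitely many conjugates of finitely many copies of $\pi$; choosing $K$ small enough to separate the finitely many relevant $\pi$-elements appearing along this geodesic, the quotient map to $X_L^{N_K}(S)$ sends the geodesic to a non-constant path, so $q_K(g)\neq 1$. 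This is the same style of argument used for Theorem~\ref{thm:fourone}, and I would expect the write-up to mirror \cite[lemma~3.2]{kls} closely, with residual finiteness of $\pi$ playing the role that closedness of $S$ played there.
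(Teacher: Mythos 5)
Your closing geometric sketch points in the same direction as the paper's actual proof (push a geodesic of $X_L^M(S)$ forward to $X_L^N(S)$ and use residual finiteness of $\pi(M,L)$ to pick $N$), but as written the proposal has a genuine gap, and the algebraic bulk of it does not close that gap. Most of your argument reduces the theorem to the assertion $\bigcap_K\ker q_K=\{1\}$ and then says ``one checks that'' it holds; that assertion \emph{is} the theorem, so this part is circular. The direct-limit remark only says that the kernels of the maps $BB_M\semi\pi\to BB_{N_K}\semi(\pi/K)$ shrink away in the limit; it says nothing about the kernels of the induced maps between the quotients $G_L^M(S)\to G_L^{N_K}(S)$, which is exactly the issue.

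The missing ingredients are (i) the precise criterion for choosing $N$ and (ii) the correct conclusion to draw from the geodesic. ``The image of the geodesic is a non-constant path, so $q_K(g)\neq 1$'' is not a valid step: a non-constant path may be a loop. What one needs is that the image $f\circ\gamma$ of the geodesic $\gamma$ from $x$ to $gx$ is again a \emph{geodesic arc}, so that its endpoints $f(x)\neq f(gx)$ are distinct; since $X_L^N(S)$ is CAT(0) it suffices that $f\circ\gamma$ be a local geodesic. The only vertices where this can fail are the finitely many interior vertices $v$ of $\gamma$ whose link in $X_L^M(S)$ is $\sss(M)$, where the link map is the covering $\sss(M)\to\sss(N)$. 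At such a $v$ the incoming and outgoing directions of $\gamma$ give points $\gamma^-,\gamma^+\in\sss(M)$ with $d_{\sss(M)}(\gamma^-,\gamma^+)\geq\pi$, and $f\circ\gamma$ is locally geodesic at $f(v)$ provided no element $h$ in the kernel of $\pi(M,L)\to\pi(N,L)$ satisfies $d_{\sss(M)}(\gamma^-,h\gamma^+)<\pi$. The open ball of radius $\pi$ about $\gamma^-$ meets only finitely many orbit points $h\gamma^+$, and none of the corresponding $h$ is the identity; collecting these over the finitely many vertices of $\gamma$ produces the finite set of non-identity elements of $\pi(M,L)$ that residual finiteness must separate --- this is the identification of ``the finitely many relevant $\pi$-elements'' that your sketch leaves unspecified, and it is the heart of the proof. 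Finally, you should treat separately the case in which $g$ fixes a point of $X_L^M(S)$ (for instance $gx_0=x_0$ for your chosen basepoint), where there is no nondegenerate geodesic: there $g$ lies in a conjugate of $\pi(M,L)$ and one applies residual finiteness to $g$ directly, choosing $N$ with $\pi(N,L)$ a finite quotient in which $g$ survives. With these additions your geometric paragraph becomes essentially the paper's argument; without them it does not yet constitute a proof.
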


\begin{proof}
  Let $x$ be a point of $X_L^M(S)$ such that $d(x,gx)$ is minimized.
  If $d(x,gx)=0$, then $x$ must be a vertex, and $g$ is contained in
  a conjugate of $\pi(M,L)\leq G_L^M(S)$.  By hypothesis there is a
  finite quotient $Q$ of $\pi(M,L)$ in which the image of $g$ is
  not the identity, and we may choose $N$ so that $\pi(N,L)=Q$.  

  Otherwise, let $\gamma$ be the unique geodesic arc from $x$ to
  $gx$ in $X_L^M(S)$.  If $N\rightarrow L$ is any finite regular
  covering so that $M$ also covers $N$, let $f:X_L^M(S)\rightarrow
  X_L^N(S)$ be the induced map of CAT(0) cubical complexes.  If
  $f\circ\gamma$ is a geodesic arc in $X_L^N(S)$, then $f(x)\neq f(gx)$,
  indicating that the image of $g$ in $G_L^N(S)$ is not the identity.
  Thus it suffices to show that we can choose $N$ so that $f\circ \gamma$
  is a geodesic arc.  For any $N$, the map $f$ will be a local isometry
  except at the vertices with height in $S$, so if the interior of $\gamma$
  contains no such vertices, we may take $N=L$.  In any case, there are only
  finitely many such vertices.

  For each vertex $v$ that is contained in the interior of $\gamma$,
  the inward and outward pointing parts of $\gamma$ define a pair of
  points $\gamma^-,\gamma^+\in \link_X(v) \cong \sss(M)$ necessarily
  separated by at least $\pi$ in $\sss(M)$.  Now $f\circ\gamma$ is
  locally geodesic at $f(v)$ provided that the distance in
  $\link_{f(X)}(f(v))\cong\sss(N)$ between $f(\gamma^-)$~and
  $f(\gamma^+)$ is also at least $\pi$.  The open ball of radius $\pi$
  in $\sss(M)$ centred at $\gamma^-$ contains only finitely many points
  $h\gamma^+$ of the orbit $\pi(M,L)\gamma^+$, and none of these $h$
  is the identity.  Since there are only finitely many vertices on
  $\gamma$, we obtain a finite set $\{h_1,\ldots,h_m\}$ of
  non-identity elements of $\pi(M,L)$ with the property that
  $f\circ\gamma$ is a geodesic arc provided that $f(h_i)\neq 1\in
  \pi(N,L)$ for each $h_i$.  Since $\pi(M,L)$ is residually finite, we
  can find such an $N$.
\end{proof}

\section{Simplicial approximations} \label{sec:five}

If $L'$ is a subdivision of a flag complex $L$, a simplicial
approximation to the identity is a simplicial map $f:L'\rightarrow L$
such that the induced map of topological spaces $f_*:|L'|\rightarrow |L|$
is homotopic to the identity map.  If $M\rightarrow L$ is a covering,
and $M'\rightarrow L'$ is the induced covering of $L'$, then any
simplicial approximation to the identity for $L$ will lift to a
$\pi(M,L)$-equivariant simplicial approximation to the identity
for $M$.   

\begin{definition}
  A subdivision $L'$ of $L$ is \emph{suitable} if there is a simplicial
  approximation $f$ to the identity such that for any pair $u,v$ of
  adjacent vertices of $L'$, the image $f(\Star(u)\cup \Star(v))$ is
  contained in a single simplex of $L$.
\end{definition}

\begin{proposition}
  The second barycentric subdivision of any simplicial complex is
  suitable.
\end{proposition}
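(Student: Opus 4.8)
The plan is to show that the second barycentric subdivision $L'' = (L')'$ of any simplicial complex $L$ admits a simplicial approximation $f$ to the identity with the required "local" control. I would build $f$ in two stages, using the standard fact that there is a simplicial approximation $L'\to L$ to the identity for the first barycentric subdivision: a simplex of $L'$ is a chain $\sigma_0 \subsetneq \sigma_1 \subsetneq \cdots \subsetneq \sigma_k$ of simplices of $L$, represented by their barycenters, and one defines $g:L'\to L$ by sending the barycenter of $\sigma$ to any chosen vertex of $\sigma$ (say, the minimum vertex under some fixed ordering of $L^0$). This is simplicial because the chosen vertices of a chain all lie in the largest simplex $\sigma_k$ of that chain, hence span a simplex of $L$; and it is a simplicial approximation to the identity because $g(\mathrm{St}(\hat\sigma))$ lies in the closed star of any vertex of $\sigma$ in $|L|$, so $g_*$ is contiguous to $\mathrm{id}$.

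The key quantitative point I would isolate first is this: for the first barycentric subdivision, if $\hat\sigma$ and $\hat\tau$ are two adjacent vertices of $L'$, then one of $\sigma,\tau$ contains the other, and in any case $g(\hat\sigma)$ and $g(\hat\tau)$ both lie in the larger of the two simplices. More generally, for the \emph{second} barycentric subdivision, a vertex of $L''$ is the barycenter $\widehat{C}$ of a simplex $C$ of $L'$, i.e.\ of a chain $C=(\sigma_0\subsetneq\cdots\subsetneq\sigma_k)$ in $L$. Two vertices $\widehat{C},\widehat{D}$ of $L''$ are adjacent exactly when (after reindexing) one of $C,D$ is a subchain of the other; the union of their two stars in $L''$ is supported, in $|L'|$, inside $\mathrm{St}_{L'}(\hat\sigma_k)\cup\mathrm{St}_{L'}(\hat\tau_j)$ where $\sigma_k,\tau_j$ are the top terms of the chains, and since the two stars overlap we get everything inside the closed star in $|L'|$ of a \emph{single} vertex $\hat\rho$ of $L'$ (namely $\hat\sigma_k=\hat\tau_j$ if the chains share a top term, or whichever is larger). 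The star of a vertex $\hat\rho$ of $L'$ in $|L'|$ is the union of closed simplices of $L'$ containing $\hat\rho$, all of which are chains through $\rho$, hence all of whose vertices are barycenters of faces of $\rho$ together with simplices containing $\rho$. I would then precompose: take $f=g'\circ g''$ where $g'':L''\to L'$ and $g':L'\to L$ are the first-barycentric simplicial approximations just described, and verify that $f$ sends $\mathrm{St}_{L''}(\widehat C)\cup\mathrm{St}_{L''}(\widehat D)$ into a single simplex of $L$.

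Concretely, the verification goes: $g''$ maps the vertex set of $\mathrm{St}_{L''}(\widehat C)\cup\mathrm{St}_{L''}(\widehat D)$ into the vertex set of the closed star $\mathrm{St}_{L'}(\hat\rho)$ of a single vertex of $L'$; every vertex of that closed star is the barycenter of a simplex of $L'$ that is a chain \emph{containing} $\rho$ as one of its terms, or a chain \emph{contained in} $\rho$ viewed as a chain of its own faces. In either situation, applying $g'$ (the "minimum vertex of the chain" map) yields a vertex of $L$ lying in $\rho$ itself when the chain is a subchain of $\rho$'s faces, or a vertex of the top simplex of the chain otherwise. The finitely many simplices of $L$ arising this way all have $\rho$'s top term as a common face bound, so their chosen vertices all lie in one simplex of $L$; this is the simplex we want. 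The \textbf{main obstacle} is purely bookkeeping: getting the combinatorics of "adjacent vertices of the double barycentric subdivision" exactly right, i.e.\ proving cleanly that the union of the two stars is contained in a single closed vertex-star at the previous level, and then tracking which simplex of $L$ absorbs all the image vertices. Once that containment is nailed down, simpliciality of $f=g'\circ g''$ and homotopy to the identity are automatic from the fact that a composite of simplicial approximations to the identity is one.
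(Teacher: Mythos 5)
Your overall plan --- compose the two standard barycentric approximations and absorb everything into the top simplex of the larger chain --- is the right one, and is essentially the paper's strategy; but the two steps you set aside as ``bookkeeping'' are exactly where the content lies, and as you state them they fail. First, the intermediate claim that $\Star_{L''}(\widehat C)\cup\Star_{L''}(\widehat D)$ is contained in the closed star in $L'$ of the single vertex $\hat\rho$, with $\rho$ the top term of the larger chain, is false. Take $L$ to be two $2$-simplices $\sigma,\sigma'$ sharing only the vertex $v$, and $C=(v<\sigma)$, $D=(v)$. Then $\widehat{(v<\sigma')}$ is adjacent to $\widehat D$ in $L''$, but the edge of $L''$ joining them is supported in the edge of $L'$ with vertices $\hat v,\hat\sigma'$, which does not lie in the closed star of $\hat\sigma$, since no chain contains both $\sigma$ and $\sigma'$. (The union of the two stars does lie in the closed star of a barycenter of a term of the \emph{smaller} chain, but that does not help, for the following reason.) Second, the absorption step is invalid: the vertices of a closed star of $\hat\rho$ in $L'$ include barycenters of simplices $\epsilon\supsetneq\rho$, and the least vertex of such an $\epsilon$ need not lie in $\rho$; two such $\epsilon$ (e.g.\ two triangles containing a common edge $\rho$) merely share the face $\rho$, and ``having a common face'' does not place their chosen least vertices in a single simplex of $L$. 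Moreover the conclusion genuinely depends on a choice you leave open: if $g'':L''\to L'$ is taken to send a chain to the barycenter of its \emph{maximal} term (an equally legitimate simplicial approximation to the identity, coming from a different ordering of $(L')^0$), then in the two-triangle example the composite sends vertices of $\Star(\widehat C)\cup\Star(\widehat D)$ to the least vertices of $\sigma$ and of $\sigma'$, which span no simplex, so suitability fails for that choice.

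The missing idea is to pin the choices down so that the composite can be computed, and then to argue about images of vertices rather than about containments of stars inside $|L'|$. Order the vertices of $L'$ by dimension of the underlying simplex of $L$; then $g''$ sends a chain $\underline{\sigma}=(\sigma_0<\cdots<\sigma_n)$ to $\hat\sigma_0$, and $f=g'\circ g''$ sends $\underline{\sigma}$ to the least vertex of the \emph{minimal} term $\sigma_0$. With this in hand the verification is a short case analysis, and it is exactly the paper's proof: any vertex $\underline{\tau}$ of $\Star(\underline{\sigma})$ is a subchain or a superchain of $\underline{\sigma}$; in the first case its minimal term $\tau_0$ is a term of $\underline{\sigma}$, hence a face of $\sigma_n$, and in the second case $\tau_0\subseteq\sigma_0\subseteq\sigma_n$. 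So $f(\Star(\underline{\sigma}))$ consists of vertices of $\sigma_n$. For an adjacent pair, say $\underline{\tau}\subseteq\underline{\sigma}$, the maximal term of $\underline{\tau}$ is also a face of $\sigma_n$, so the same argument puts $f(\Star(\underline{\tau})\cup\Star(\underline{\sigma}))$ inside the single simplex $\sigma_n$ of $L$. You should replace your star-containment claim and the ``common face bound'' deduction with this explicit computation; once that is done, simpliciality and the homotopy to the identity are, as you say, automatic.
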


\begin{proof}
  The vertices of the barycentric subdivision of $L$ are indexed by the
  simplices of $L$, with an edge joining the vertices $\tau,\sigma$ if
  and only if one of $\tau$ and $\sigma$ is a face of the other.
  There is a well-known description of a simplicial approximation in
  this case: fix a partial order on the vertex set of $L$ that is
  total when restricted to each simplex, and send the vertex $\sigma$
  of the barycentric subdivision to the least vertex in $L$ of the
  simplex $\sigma$.

  The vertices of the second barycentric subdivision of $L$ are
  indexed by chains $\underline{\sigma} = \sigma_0<\sigma_1<\cdots
  <\sigma_n$ of simplices of $L$, where $\underline{\sigma}$ and
  $\underline{\tau}$ are joined by an edge if and only if one is a
  subchain of the other.  Write $\underline{\tau}\subseteq
  \underline{\sigma}$ to indicate that $\underline{\tau}$ is a
  subchain of $\underline{\sigma}$.  The natural choice of a partial
  order on the vertices of the barycentric subdivision is to order
  by dimension of the corresponding simplex of $L$.  With these
  choices, the composite simplicial approximation from the
  second barycentric subdivision to $L$ sends the vertex
  $\underline{\sigma}$ to the least vertex of the simplex
  $\sigma_0$, i.e., the least vertex of the minimal simplex in
  the chain.

  A vertex in the star $\Star(\underline{\sigma})$ is
  either a subchain or a superchain of $\underline{\sigma}$.  If
  $\underline{\tau}\subseteq \underline{\sigma}$, then the minimal
  simplex $\tau_0$ of $\underline{\tau}$ is contained in
  $\sigma_n$, the maximal simplex
  of $\underline{\sigma}$.  If instead $\underline{\tau}\supseteq
  \underline{\sigma}$, then $\tau_0$
  is contained in $\sigma_0$.  In either case, the minimal vertex
  of the minimal simplex of $\underline{\tau}$ is a vertex of
  $\sigma_n$, and so $f(\Star(\underline{\sigma}))$ is contained
  in the simplex $\sigma_n$.

  If there is an edge in the second barycentric subdivision
  between $\underline{\sigma}$ and $\underline{\tau}$, then
  one of the two chains is a subchain of the other, so we may 
  suppose $\underline{\tau}\subseteq \underline{\sigma}$.
  In this case the maximal simplex $\tau_p$ is contained
  in the maximal simplex $\sigma_n$, and so
  $f(\Star(\underline{\tau})\cup\Star(\underline{\sigma}))$
  is contained in the single simplex $\sigma_n$ of $L$.
\end{proof}

\section{Special cube complexes}\label{sec:six}

There is a natural identification of $X_L^M(S)/G_L^M(S)$ with
$X_L/BB_L$, so we start by considering the cube complex $X_L/BB_L$,
and its quotient $\ttt_L=X_L/A_L$.  The Salvetti complex $\ttt_L$
has one vertex, edges in bijective correspondence with the vertex
set $L^0$ and squares in bijective correspondence with $L^1$.
The map $\ttt_L\rightarrow \ttt$ described earlier lifts to a
map $X_L/BB_L\rightarrow \rr$ such that the image of each vertex
is an integer, and furthermore the image of each $n$-cube of $X_L$
is an interval of length~$n$.

We may view edges of $X_L^M(S)$ as being labelled by elements of
$L^0$ via the identification of $X_L^M(S)/G_L^M(S)=X_L/BB_L$, and
the squares of $X_L^M(S)$ as being labelled by elements of $L^1$.
The function $X_L/BB_L\rightarrow \rr$ induces a height function
on $X_L^M(S)$.  This height function and the labellings discussed
above are preserved by the action of $G_L^M(S)$.  In $X_L/BB_L$
there is one vertex of each height $n\in \zz$, and for each
$x\in L^0$ there is one edge labelled $x$ whose vertices are
of heights $n$~and~$n+1$.  Directed edges either point upwards
or downwards, and the opposite sides of a square of $X_L^M(S)$
point the same way.

If $H$ is a finite index normal subgroup of $G_L^M(S)$, it follows
that $X_L^M(S)/H$ has finitely many vertices of each height, and
finitely many edges of each height.  Moreover, the group $G_L^M(S)/H$
acts freely and transitively on the edges with label $x\in L^0$
of each fixed height.  This group acts transitively on the vertices
of each fixed height too, but for vertices whose height is not in
$S$, the stabilizer of a vertex is the group $\pi(M,L)/(\pi(M,L)\cap H)$.

Since the adjacent sides of each square have distinct labels in $L^0$,
no hyperplane of $X_L^M(S)/H$ can self-intersect.  Since the opposite
sides of each square point either upwards or downwards, no hyperplane
in $X_L^M(S)/H$ can fail to be 2-sided.  Thus to establish that
$X_L^M(S)/H$ is special, we only need to check that there are no
direct self-osculations and no inter-osculations.

Each simplex $\sigma$ of $L$ corresponds to a coordinate subtorus
$\ttt_\sigma$ of $\ttt_L$, and this lifts to a single infinite
cylinder inside $X_L/BB_L$.  If we identify the torus $\ttt_\sigma$
with the quotient $\rr^{n+1}/\zz^{n+1}$, where $\sigma$ is an
$n$-simplex, then its preimage inside $X_L/BB_L=X_L^M(S)/G_L^M(S)$
is the quotient $\rr^{n+1}/K$, where $K=\{(m_0,\ldots,m_n)\in \zz^{n+1}:
m_0+\cdots m_n=0\}$, a subgroup of $\zz^{n+1}$ of rank $n$.

The link of a vertex of $X_L^M(S)$ is either $\sss(L)$ for a vertex of
height in $S$ or $\sss(M)$ for a vertex of height not in $S$.  If
$\sigma$ is any simplex of $L$, then the inverse image of $\sigma$ in
$M$ is a disjoint union of finitely many simplices
$\sigma_1,\ldots,\sigma_k$ of $M$, where $k$ is the index of the
cover.  The inverse image of $\sss(\sigma)$ in $\sss(M)$ is thus a
disjoint union of $k$ copies of the $n$-sphere
$\sss(\sigma_1)\sqcup\ldots,\sqcup\,\sss(\sigma_k)$.  

To simplify the discussion, suppose from now on that $\pi(M,L)$ is
finite and that $H$ is torsion-free.  In this case, the stabilizer
in $G_L^M(S)/H$ of each vertex of $X_L^M(S)/H$ of height not in $S$
is isomorphic to $\pi(M,L)$.  By the observations above,
the inverse image in $X_L^M(S)/H$ of the cylinder of $X_L/BB_L$
labelled by $\sigma$ is a disjoint union of finite covers of the
cylinder, except that the vertices of the inverse image of height
not in $S$ are identified in orbits of size $\pi(M,L)$.  The cylinders
labelled by a given simplex are permuted by $G_L^M(S)/H$, and the
stabilizer of each cylinder in this action is abelian and generated by
at most $n$ elements: if the cylinder
is $\rr^{n+1}/K$, and we view $K$ as a subgroup of $G_L^M(S)$, then
the stabilizer is the group $K/(K\cap H)\cong KH/H\leq G_L^M(S)/H$.

\begin{lemma}\label{lem:cylint}
  If the intersection of two cylinders contains an edge $e$ with a
  given label in~$L^0$, then it contains an edge of each height with
  that same label.
\end{lemma}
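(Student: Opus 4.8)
The plan is to exploit the product structure of the cylinders together with the height function. Each cylinder in $X_L^M(S)/H$ is a finite cover of the infinite cylinder $\ttt_\sigma \times \rr / (\text{identifications})$ coming from a single coordinate subtorus $\ttt_\sigma$ of $\ttt_L$, lifted to $X_L/BB_L$ as $\rr^{n+1}/K$ and then pulled back. A crucial structural fact is that these cylinders, and their intersections, are invariant under the height-translation action: in $X_L/BB_L$ the full cylinder $\rr^{n+1}/K$ admits the translation $(m_0,\ldots,m_n)\mapsto (m_0+1,\ldots,m_n+1)$, which shifts height by $n+1$ but preserves labels in $L^0$, and more relevantly the single generator translations shift height by $1$. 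The point is that a cylinder labelled $\sigma$ is the union of translates of a single ``fundamental slab'' and the labelling of edges by vertices of $\sigma \subseteq L^0$ is constant along the $\rr$-direction.

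The key steps I would carry out, in order: (1) First I would pass to $X_L/BB_L$ itself, or rather reduce the claim to a statement about the two cylinders upstairs in $X_L^M(S)$ or $X_L/BB_L$ before quotienting, since the covering maps and the quotient by $H$ are label- and height-preserving local isometries on the interiors of cylinders, so an edge of a given label and height in an intersection upstairs maps to one downstairs, and conversely edges downstairs lift. (2) Then I would use the fact that two cylinders correspond to two simplices $\sigma, \tau$ of $L$ (in the downstairs picture) or of $M$, and their intersection, if it contains an edge, corresponds to a common vertex or common face; the intersection of $\rr^{\sigma} \cap \rr^{\tau}$ inside $X_L$ is again a coordinate subcomplex, associated to $\sigma \cap \tau$, hence itself a cylinder of the same type. (3) Finally I would observe that any cylinder, being $\rr^{k+1}/K'$ for the appropriate subgroup, contains edges of every label in the relevant simplex at every height: explicitly, the edge in direction $e_i$ based at the image of any lattice point realises label $v_i$, and translating the base lattice point changes the height by an arbitrary integer. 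So once the intersection contains one edge with label $x$, translating within the intersection-cylinder by $\pm 1$ in height repeatedly produces an edge with label $x$ at every height.

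The main obstacle I anticipate is step (2): making precise the claim that the scheme-theoretic (really, polyhedral) intersection of two coordinate cylinders is again a coordinate cylinder of the same form, especially in the covering complex $X_L^M(S)/H$ where cylinders over the same $\sigma$ in $L$ split into several cylinders over the lifts $\sigma_1,\ldots,\sigma_k$ in $M$, and where vertices of height not in $S$ are glued in $\pi(M,L)$-orbits. One must check that if two such cylinders share an edge $e$, they share the whole ``column'' over $e$ in the height direction — that gluing vertices together does not create spurious partial intersections that exist at one height but not another. I would handle this by working one cube at a time: an edge $e$ of label $x$ and height $n$ lies in a cylinder labelled $\sigma$ only if $x$ is a vertex of $\sigma$ and $e$ lies in the appropriate coset; the set of cylinders through a fixed edge is therefore governed by purely local (link-level) data in $\sss(L)$ or $\sss(M)$, which is the same at every height up to the label-preserving height translation, so the incidence pattern of cylinders is periodic in height. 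That periodicity, combined with the connectivity of each cylinder in the $\rr$-direction, forces the conclusion.
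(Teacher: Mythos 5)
Your proposal founders at exactly the point you flag as the main obstacle, your step (2), and the device you offer to close the gap does not work. First, a reduction to $X_L/BB_L$ loses the content of the lemma: there is only one cylinder per simplex there, and the branched covering $X_L^M(S)\rightarrow X_L/BB_L$ identifies the distinct cylinders lying over the same simplex, so nothing about their intersections upstairs can be read off downstairs. (Passing from $X_L^M(S)/H$ to $X_L^M(S)$ is legitimate, since $H$ acts freely, but it does not remove the difficulty, because the branch vertices are already present in $X_L^M(S)$.) Second, the fact that intersections of coordinate flats are coordinate flats is a statement about the convex standard flats of $X_L$ or $X_M$; the cylinders of $X_L^M(S)$ are images of such flats under the quotient by $N(S)$, they are not convex (at a vertex of height not in $S$ a single cylinder can meet the link $\sss(M)$ in several disjoint spheres $\sss(\sigma_i)$), and the intersection of two images can be strictly larger than the image of an intersection. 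Third, the claimed ``periodicity in height of the incidence pattern'' is not available: vertices of height in $S$ have link $\sss(L)$ while vertices of height not in $S$ have link $\sss(M)$, the lemma is applied with no periodicity hypothesis on $S$, and there is no label-preserving height translation of $X_L^M(S)/H$ in general. Even granting some periodicity, your argument never shows that the given edge $e$ continues to the adjacent height inside \emph{both} cylinders simultaneously, which is the whole point: a priori, at a branch vertex the cylinder $C_1$ could continue the column of $e$ through one sphere of its link and $C_2$ through another, so the intersection could stop there.

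The missing ingredient is a local uniqueness statement, and this is what the paper's proof uses. If $v$ is an endpoint of $e$, then $e$ determines a vertex of $\link_X(v)$, which is $\sss(L)$ or $\sss(M)$, and by \cite[prop.~7.3]{ufp} the antipode of this vertex in the link is uniquely determined; it corresponds to an edge $e'$ with the same label as $e$ and with height differing by one. Any cylinder containing $e$ meets $\link_X(v)$ in a union of subspheres, each closed under taking this antipode, so by uniqueness $e'$ lies in every cylinder containing $e$, hence in the intersection; iterating upwards and downwards through successive endpoints produces an edge of every height with the given label. Without this unique-continuation property, or an equivalent substitute, the ``spurious partial intersections'' you yourself worry about are not ruled out, so as it stands the proposal has a genuine gap.
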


\begin{proof}
  If $v$ is one of the vertices of the edge $e$, $e$ defines a vertex
  of the link $\link_X(v)$ which is either $\sss(L)$ or $\sss(M)$.
  By~\cite[prop.~7.3]{ufp} the antipode of this point of $\link_X(v)$
  is uniquely determined, and corresponds to an edge $e'$ of
  $X_L^M(S)$ with the same label as $e$ but with height differing by
  one from that of $e$.  If $e$ is contained in a cylinder $C$, then
  so is $e'$.
\end{proof}

\begin{definition}
  Say that edges $e$, $e'$ of $X_L^M(S)/H$ are \emph{cylinder equivalent} if
  they have the same label in $L^0$ and there are $r\geq 0$, edges
  $e_0,\ldots,e_r$ and cylinders $C_1,\ldots,C_r$ so that each $e_i$
  has the same label as $e$, and for $1\leq i\leq r$ both
  $e_{i-1}$~and~$e_i$ are contained in the cylinder $C_i$.
\end{definition}

Let $Q=G_L^M(S)/H$ be the group of deck transformations of the
branched cover $X_L^M(S)/H\rightarrow X_L/BB_L$, so that $Q$ acts
freely on the edges of $X_L^M(S)/H$ and permutes the cylinders.  

\begin{proposition}
  Suppose that $e'$ is an edge of the same height as $e$, and let
  $P\leq Q$ be the subgroup generated by the stabilizers of all
  of the cylinders that contain $e$.  Then $e'$ is cylinder equivalent
  to $e$ if and only if $e'$ lies in the orbit $Pe$.
\end{proposition}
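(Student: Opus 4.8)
The plan is to prove both implications directly from the definition of cylinder equivalence, using the freeness of the $Q$-action on edges and the way $Q$ permutes cylinders. First I would prove the ``if'' direction, that every element of $Pe$ is cylinder equivalent to $e$. Since $P$ is generated by the stabilizers of the cylinders through $e$, and since cylinder equivalence is obviously an equivalence relation on edges of a fixed label, it suffices to check that if $g\in Q$ stabilizes a cylinder $C$ containing $e$, then $ge$ is cylinder equivalent to $e$. Here I would invoke Lemma~\ref{lem:cylint}, which guarantees that $e$ lies in $C$ together with an edge of every height carrying the same label; in particular $C$ contains edges of the same height as $e$. The key point is then that $C$, being a finite cover of a single cylinder of $X_L/BB_L$, meets each height in a connected ``annular'' piece, so any two edges of $C$ of the same label and height are linked by a short chain of cylinders — indeed by a single extra cylinder if they share a vertex, and in general by a path within $C$. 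Since $g$ fixes $C$ setwise, $ge$ is such an edge of $C$, so $ge$ is cylinder equivalent to $e$. A small technical point I would make precise: the chain of cylinders witnessing cylinder equivalence must consist of edges of the same label as $e$, and the edges of $C$ of a given label are exactly the lifts of a single orbit, which is why staying inside $C$ keeps the label constant.

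For the ``only if'' direction, I would argue that the relation of cylinder equivalence is generated by single steps ``pass from $e_{i-1}$ to $e_i$ within a common cylinder $C_i$'', so it suffices to show that each such step moves $e$ within its $P$-orbit. Suppose $e$ and $e'$ both lie in a cylinder $C$, with $C$ containing $e$, so that the stabilizer $\mathrm{Stab}_Q(C)$ is one of the generators of $P$. Since $Q$ acts transitively on the edges of $X_L^M(S)/H$ of a given label and height (as noted in Section~\ref{sec:six}), there is some $g\in Q$ with $ge=e'$; moreover $g$ is \emph{unique} because $Q$ acts freely on edges. I must show $g\in \mathrm{Stab}_Q(C)$, i.e.\ $gC=C$. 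Now $gC$ is a cylinder containing $ge=e'$, and $C$ is also a cylinder containing $e'$, both with the same label. The crucial fact is that through an edge of a given label there is at most one cylinder of that label: a cylinder of label $\sigma$ is determined by the edge together with the simplex $\sigma$ of $L$, and the edge already records the label — more precisely the cylinder labelled by an $n$-simplex $\sigma$ containing a given edge is uniquely determined, since it is the image of the unique coordinate cylinder $\ttt_\sigma$ through the corresponding edge of $X_L/BB_L$. Hence $gC=C$, so $g\in P$ and $e'\in Pe$. To chain this up: if $e$ is connected to $e'$ by a sequence $e=e_0,e_1,\ldots,e_r=e'$ with $e_{i-1},e_i$ in a common cylinder $C_i$, then each $C_i$ contains $e_{i-1}$; applying the single-step argument and composing the unique group elements shows $e'\in P'e$ where $P'$ is generated by the stabilizers of the $C_i$, and each of these stabilizers is conjugate into — in fact equal to a generator of — $P$ once we track everything back to cylinders through $e$, since $C_i$ contains $e_{i-1}=g_{i-1}\cdots g_1 e$ for the partial products $g_j$.

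The step I expect to be the main obstacle is pinning down the claim that ``a cylinder of a given label containing a given edge is unique'', and relatedly that two same-label same-height edges of a single cylinder are connected by a chain of cylinders \emph{staying inside that cylinder} — i.e.\ that each cylinder is, after quotienting by $H$ and identifying the vertices of height outside $S$ in $\pi(M,L)$-orbits, still ``cylinder-connected'' in its own right. This is essentially a statement about the combinatorics of the finite cover $\mathbb{R}^{n+1}/K$ of a single cylinder $\mathbb{R}^{n+1}/\zz^{n+1}$ described in Section~\ref{sec:six}: one needs that the various coordinate sub-cylinders (corresponding to faces of $\sigma$) together with Lemma~\ref{lem:cylint} suffice to link up all translates of $e$ within the cover. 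I would handle this by an explicit induction on $n$ using the product structure of the cylinder, reducing to the $1$-dimensional case where the claim is the elementary fact that a finite cyclic cover of a circle is connected. Once that combinatorial lemma is in hand, both directions of the Proposition follow cleanly from freeness and the uniqueness of cylinders.
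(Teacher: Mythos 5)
Your overall strategy is the paper's: work along the chain of cylinders, use the free and transitive action of $Q$ on edges of a fixed label and height to produce a unique element carrying one edge of the chain to the next, and show that this element lies in the stabilizer of the relevant cylinder, so that the product of the step elements lies in $P$. Your justification of the single step --- $gC$ and $C$ are cylinders with the same simplex label, both containing $e'$, and an edge lies in at most one cylinder of a given label, hence $gC=C$ --- is sound: the uniqueness you flag as the main obstacle follows from the paper's description of the preimage in $X_L^M(S)/H$ of a cylinder of $X_L/BB_L$ as a disjoint union of cylinders identified only at vertices of height outside $S$, so two same-label cylinders can share vertices but never an edge. This is in fact a cleaner account of the ``by transitivity'' step that the paper leaves implicit.

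There is, however, a genuine gap in the chaining. The definition of cylinder equivalence places no restriction on the heights of the intermediate edges $e_1,\dots,e_{r-1}$, whereas your single-step argument needs $e_{i-1}$ and $e_i$ to have the same height: $Q$ preserves the height function, so if the heights differ there is no $g\in Q$ with $ge_{i-1}=e_i$ at all, and the step fails. The paper repairs exactly this at the outset: by Lemma~\ref{lem:cylint} the chain may be re-chosen so that every $e_i$ has the same height as $e$, and only then does the induction run. You instead spend Lemma~\ref{lem:cylint} on the ``if'' direction, where it is not needed: if $g$ stabilizes a cylinder $C$ containing $e$, then $ge\in C$ carries the same label as $e$, so $e$ and $ge$ are cylinder equivalent by definition via the single cylinder $C$ (a chain with $r=1$). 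Consequently the second half of your ``main obstacle'' --- linking two same-label, same-height edges of one cylinder by a chain of cylinders staying inside it, with the proposed induction on the dimension of the cylinder --- is a non-issue, while the genuinely needed height normalization is missing. With Lemma~\ref{lem:cylint} moved to the ``only if'' direction, your argument goes through and essentially coincides with the paper's proof.
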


\begin{proof}
  By induction on the length $r$ of the chain of cylinders used to
  establish the cylinder equivalence.  By Lemma~\ref{lem:cylint},
  if $e'$ is cylinder equivalent to $e$, we may choose
  $e_0,\ldots,e_r$ and $C_1,\ldots,C_r$ as in the definition with
  each $e_i$ having the same height as $e$.  Let $P_1$ be the
  stabilizer in $Q$ of $C_1$.  By transitivity, there exists
  $g\in P_1$ so that $ge=ge_0=e_1$.  Since there is a shorter
  cylinder equivalence from $e_1$ to $e_r$, there exists $g'\in P$
  so that $g'e_1=e_r=e'$, and since $P_1\leq P$, we see that $g'g\in
  P$ and that $(g'g)e=e'$.  This argument can be reversed, giving
  the converse.
\end{proof}

\begin{proposition}
  If $e$ and $e'$ are in the same hyperplane, then $e$ and $e'$ are
  cylinder equivalent.
\end{proposition}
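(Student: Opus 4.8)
The plan is to recall that a hyperplane is an equivalence class of directed edges under the relation generated by "opposite directed edges of a square," so it suffices to handle the single-square step: if $e$ and $e'$ are opposite sides of a square $T$ of $X_L^M(S)/H$, then $e$ and $e'$ are cylinder equivalent. Transitivity of cylinder equivalence and induction on the number of squares in the chain defining the hyperplane then finish the argument. (Strictly, a hyperplane also identifies the two orientations of an edge, but since both orientations of $e$ correspond to the same underlying $1$-cell with the same label in $L^0$, and cylinder equivalence is a relation on underlying edges, this is not an issue; I would state this in one sentence.)

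For the single-square step, let $T$ be a square with sides $e$, $e'$ on opposite faces. The square $T$ is labelled by a $1$-simplex $\tau=\{x,y\}$ of $L$, where $x\in L^0$ is the label of $e$ (and of $e'$) and $y$ is the label of the two sides of $T$ adjacent to $e$. The edge $\tau$ lies in some top-dimensional simplex — in fact $\tau$ itself already works — and the cylinder $C$ of $X_L^M(S)/H$ containing $T$ (the image of a lift of the coordinate subtorus $\ttt_\tau$, as discussed before Lemma~\ref{lem:cylint}) contains both $e$ and $e'$, since the square $T$ lies in $C$ and $C$ is a subcomplex. Thus taking $r=1$, $e_0=e$, $e_1=e'$, $C_1=C$ in the definition shows that $e$ and $e'$ are cylinder equivalent.

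The main obstacle — really the only point requiring care — is making precise that the two opposite faces $e$ and $e'$ of the square $T$ genuinely lie in a common cylinder as that notion is defined here, i.e., that $T$ sits inside (the image in $X_L^M(S)/H$ of) a lift of the coordinate subtorus $\ttt_\tau$ associated to the $1$-simplex $\tau$ labelling $T$. This follows from the description, given in the paragraph preceding Lemma~\ref{lem:cylint}, of how simplices of $L$ correspond to coordinate subtori of $\ttt_L$ and hence to cylinders in $X_L/BB_L$, together with the fact that every square of $X_L^M(S)/H$ maps to a square of $\ttt_L$ and is therefore covered by the cylinder of the simplex labelling it. Once that identification is in hand, the rest is the formal observation that cylinder equivalence is transitive, so it propagates along the finite chain of squares that connects $e$ to $e'$ within their common hyperplane.
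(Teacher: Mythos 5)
Your proposal is correct and follows essentially the same route as the paper, whose entire proof is the observation that each square of $X_L^M(S)/H$ is contained in a cylinder; your write-up simply makes explicit the reduction to the single-square step and the transitivity argument that the paper leaves implicit.
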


\begin{proof}
  Each square of $X_L^M(S)/H$ is contained in a cylinder.
\end{proof}

\begin{proposition}\label{prop:vspec} 
  Suppose that $S+n=S$ and that $M\rightarrow L$ is a finite regular
  cover.  If $G_L^M(S)$ is virtually (non-cocompact) special then
  $G_L^M(S)\semi n\zz$ is virtually special.
\end{proposition}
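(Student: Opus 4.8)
Write $G=G_L^M(S)$, $\hat G=G_L^M(S)\semi n\zz$ and $X=X_L^M(S)$, and fix a finite-index subgroup $H\leq G$ with $X/H$ special; since $X/H$ is then a special cube complex, $H$ is torsion-free and acts freely on $X$. The plan is: build from $H$ a finite-index torsion-free subgroup $\hat H\leq\hat G$; observe that ``$\hat G$ is virtually special'' reduces to ``$\hat H=\pi_1(X/\hat H)$ is virtually special''; and deduce the latter from the Haglund--Wise criterion by transporting, along the extension $1\to H\to\hat H\to\zz\to1$, the separability facts that follow from $X/H$ being special.

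For the construction, the element $t\in\hat G$ generating the $n\zz$ factor acts on $G$ by the height-shift automorphism, and $G$ is finitely generated because $M\to L$ is a finite cover, so $G$ has only finitely many subgroups of index $[G:H]$ and hence $t^k$ normalises $H$ for some $k\geq1$. Put $\hat H:=H\semi\langle t^k\rangle\leq\hat G$: it is torsion-free of finite index $k\,[G:H]$, acts freely on $X$, and $Y:=X/\hat H$ is a compact non-positively curved cube complex, covered with infinite cyclic deck group by $X/H$, and with $\pi_1(Y)=\hat H$. As $\hat H$ has finite index in $\hat G$ it is enough to show $\hat H$ is virtually special. The edge-labels in $L^0$ and the up/down distinction on $X$ are preserved by all of $\hat G$ (the shift $t$ preserves both), so, exactly as in Section~\ref{sec:six}, the hyperplanes of $Y$ are $2$-sided and non-self-intersecting; no subdivision is needed and $Y$ is special as soon as it has no direct self-osculation and no inter-osculation.

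By the Haglund--Wise criterion~\cite{hw}, a compact non-positively curved cube complex whose hyperplanes are $2$-sided and non-self-intersecting is virtually special once the hyperplane subgroups of its fundamental group, together with the intersections and double cosets of these that control self- and inter-osculation, are separable. Each such subgroup or coset of $\hat H$ meets $H$ in the corresponding object for $\pi_1(X/H)=H$ — a hyperplane subgroup, an intersection of hyperplane subgroups, or a double coset of the special complex $X/H$ — and these are separable in $H$ because $X/H$ is special: the local isometry $X/H\to\ttt_\Gamma$ onto the Salvetti complex of the crossing graph $\Gamma$ sends hyperplane subgroups to conjugate standard parabolics, and parabolic subgroups and the relevant double cosets of right-angled Artin groups are separable, a fact I would check survives $\Gamma$ being infinite here by using the retractions $A_\Gamma\to A_{\Gamma'}$ that kill the generators off a finite full subgraph $\Gamma'$. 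Separability is then pushed up from $H$ to $\hat H$ by the following lemma, which I would state and prove: if $1\to H\to\hat H\to\zz\to1$ with $H$ finitely generated and $A\leq\hat H$ has $A\cap H$ separable in $H$, then $A$ is separable in $\hat H$. (Given $g\notin A$, one reduces at once to $g\in H$; a finite-index $U\leq H$ with $A\cap H\leq U$ and $g\notin U$ has only finitely many $\hat H$-conjugates, all inside $H$, since $H$ is finitely generated and normal in $\hat H$, so its $\hat H$-core $N$ is finite-index in $H$, normal in $\hat H$ and contained in $U$; now $\hat H/N$ is virtually cyclic, hence polycyclic and therefore subgroup separable, which produces a finite-index subgroup of $\hat H$ containing $AN$ but missing $g$. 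The double cosets are handled similarly.)

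The step I expect to carry the real weight is the claim, used above for $X/H$, that hyperplane subgroups and the associated double cosets in $\pi_1$ of a \emph{non-compact} special cube complex are separable: the canonical completion and retraction of~\cite{hw}, which would realise hyperplane subgroups as virtual retracts, are developed for compact complexes, and here a hyperplane of $Y$ is compact but its preimage in $X/H$ can be a non-compact ``periodic'' hyperplane — precisely when the corresponding hyperplane of $X$ is invariant under a nontrivial power of $t$ — to which that argument does not apply. I would try to settle this either by appealing to a sufficiently general separability statement for parabolic subgroups and double cosets of (possibly infinitely generated) right-angled Artin groups, or — more in keeping with the rest of Section~\ref{sec:six} — by reading off and controlling the osculation patterns of the hyperplanes of $Y$ directly from the cylinder-equivalence description of hyperplanes in the preceding propositions, which is exactly the machinery built for this. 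One could also bypass the construction of $\hat H$ entirely and apply the Haglund--Wise criterion to the proper cocompact action of $\hat G$ on $X$, using that the hyperplane subgroups and double cosets of $\hat G$ meet $G$ in those of $G$.
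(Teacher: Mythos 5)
Your opening move (replacing $H$ by a finite-index subgroup normalised by a power $t^k$ of the shift, so that $\hat H=H\semi\langle t^k\rangle$ is a finite-index torsion-free subgroup of $G_L^M(S)\semi n\zz$ acting freely on $X=X_L^M(S)$) is the same as the paper's first step. After that, however, your argument has a genuine gap, and it is the one you flag yourself: you reduce everything to a separability criterion for virtual specialness applied to the compact complex $Y=X/\hat H$, whose input is separability in $H=\pi_1(X/H)$ of hyperplane subgroups and of the double cosets controlling self- and inter-osculation. The complex $X/H$ is special but \emph{non-compact}, and the canonical completion and retraction of~\cite{hw}, which is what makes hyperplane subgroups virtual retracts and yields the double-coset separability, is a compact/cocompact statement; neither of your proposed repairs (a separability theorem for parabolics and double cosets in infinitely generated right-angled Artin groups, or redoing the cylinder-equivalence analysis) is carried out. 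The suggested bypass of applying the criterion directly to the proper cocompact action of $G_L^M(S)\semi n\zz$ on $X$ is no easier: the separability statements it needs in $G_L^M(S)\semi n\zz$ are essentially what virtual specialness is meant to deliver. Your extension lemma is also weaker than what you need: separability of a subgroup $A\leq\hat H$ from that of $A\cap H$ is fine, but osculation is governed by double cosets, and ``handled similarly'' conceals that $(AgB)\cap H$ need not be a double coset of the corresponding subgroups of $H$, so that case is not actually reduced to the special structure of $X/H$.

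None of this machinery is needed, and the missing idea is elementary. Since $H\semi mn\zz$ acts freely on $X$, the map $X/H\rightarrow X/(H\semi mn\zz)$ is a covering, hence a local isomorphism, so it creates no new pairs of osculating edges; the only way specialness could fail downstairs is that distinct hyperplanes of $X/H$ become identified, turning a harmless configuration upstairs into a self-osculation or inter-osculation downstairs. But $X/H$ has only finitely many hyperplanes (the $G_L^M(S)$-action on $X$ has finitely many orbits of hyperplanes and $H$ has finite index), the shift permutes this finite set, and so some $mn\zz$ preserves every hyperplane. For that $m$ the hyperplanes of $X/(H\semi mn\zz)$ biject with those of $X/H$, and the compact quotient is special outright; thus $H\semi mn\zz$ is a finite-index subgroup of $G_L^M(S)\semi n\zz$ with special quotient complex, with no separability input at all. (The paper arranges the normalisation by passing to a characteristic finite-index subgroup of $G_L^M(S)$, which plays the role of your $t^k$.) So the statement follows from a finiteness-and-equivariance observation about hyperplanes rather than from the Haglund--Wise separability criterion, and as written your proof is incomplete precisely at the step you identify as carrying the real weight.
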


\begin{proof}
  Suppose that $H\leq G_L^M(S)$ is a finite-index subgroup such that
  $X_L^M(S)/H$ is a special cube complex.  Since $G_L^M(S)$ contains
  finitely many subgroups of a given index, by passing to a subgroup
  if necessary we may assume that $H$ is 
  characteristic in $G_L^M(S)$, so that $n\zz$ normalizes $H$.  The
  action of $n\zz$ on $X_L^M(S)/H$ identifies vertices of different
  heights, so it does not create any new pairs of edges where an
  osculation takes place.  However, it may be that the compact cube
  complex $X_L^M(S)/(H\semi n\zz)$ has fewer hyperplanes than
  $X_L^M(S)/H$, which may cause extra interosculations or
  self-osculations.  To avoid this, note that $n\zz$ acts as
  permutations of the finitely many hyperplanes in $X_L^M(S)/H$,
  and so for some $m>0$ the subgroup $mn\zz\leq n\zz$ preserves
  each hyperplane.  For this $m$, $X_L^M(S)/(H\semi mn\zz)$ will
  be special because $X_L^M(S)/H$ is by hypothesis.
\end{proof}

\begin{theorem}\label{thm:bothvspec}
  Suppose that $M\rightarrow L$ is a finite cover, and that
  $\theta:G_L^M(S)\rightarrow Q$ is a homomorphism to a finite
  group such that the kernel of $\theta$ is torsion-free and
  such that for any two adjacent vertices $u,v\in L$, the image
  under $\theta$ of the subgroup of $G_L^M(S)$ generated by the
  edges of $\Star(u)\cup\Star(v)$ is abelian.  Then $G_L^M(S)$
  is (non-cocompact) virtually special.
\end{theorem}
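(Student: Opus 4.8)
The plan is to use the hypothesis on $\theta$ to rule out the two remaining obstructions to specialness of $X_L^M(S)/H$, where $H=\ker\theta$, namely direct self-osculations and inter-osculations. As observed in the text, $X_L^M(S)/H$ has 2-sided hyperplanes and no self-intersections automatically, so these two are all that remain. Since $H$ is torsion-free and $\pi(M,L)$ is finite, the vertex stabilizers in $Q$ are copies of $\pi(M,L)$ (at heights not in $S$) or trivial (at heights in $S$), while $Q$ acts freely on edges. The key geometric object is the collection of cylinders $C_\sigma$ indexed by simplices $\sigma$ of $L$: each square lies in a cylinder, so any osculation or intersection of hyperplanes is witnessed inside cylinders, and by the propositions just proved, being in the same hyperplane is the same (on edges of a fixed height) as lying in a common $P$-orbit, where $P\leq Q$ is generated by the stabilizers of the cylinders through a given edge.

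First I would set up the osculation criterion. Suppose a hyperplane directly self-osculates: there are two edges $e,e'$ of the hyperplane, not lying in a common square, but sharing a terminal vertex $w$. Since opposite sides of squares point the same way and a hyperplane is cylinder-equivalent to itself (previous proposition), $e$ and $e'$ have the same label $x\in L^0$ and the same height, hence $e'=ge$ for some $g\in P$, where $P$ is generated by the cylinder stabilizers through $e$; the stabilizer of the cylinder $C_\tau$ is the image in $Q$ of $K_\tau\leq G_L^M(S)$, and the cylinders through $e$ correspond precisely to simplices $\tau$ of $L$ containing the vertex $x$, i.e. to simplices of $\Star(x)$. So $P$ is the image under $\theta$ of the subgroup of $G_L^M(S)$ generated by the edges of $\Star(x)$. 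Now $e$ and $e'$ both end at $w$, so $g=\theta(k)$ for some $k$ fixing $w$; but the stabilizer of a vertex in $Q$ is either trivial or a copy of $\pi(M,L)$. The hypothesis does not immediately say $P$ meets this stabilizer trivially — so here is where I would use abelianness: the edges $e$ and $e'$ being distinct and ending at $w$ but not spanning a square means the corresponding vertices $\gamma_e,\gamma_{e'}$ of $\link(w)\cong\sss(M)$ or $\sss(L)$ are not joined by an edge; the element $g$ carries $\gamma_e$ to $\gamma_{e'}$, and one shows, using that $g$ lies in the abelian group $\theta(\langle\Star(x)\rangle)$ and that the action of this abelian group on the relevant link is by "coordinate shifts" within a sub-octahedron $\sss(\Star(x))$, that in fact $g$ must move $e$ to an edge spanning a square with $e$ — a contradiction. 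The point is that an abelian group acting on the sphericalization $\sss$ of a simplex acts so that any orbit of a vertex lies inside a single simplex of $\sss$.

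For inter-osculation the argument is parallel but needs two labels: a pair of crossing hyperplanes that also directly osculate gives, via the cylinder $C_\sigma$ for a 1-simplex $\sigma=\{u,v\}$ containing both labels, an element that translates one edge to another of a different label sharing a vertex, and the hypothesis that $\theta(\langle\Star(u)\cup\Star(v)\rangle)$ is abelian is exactly tailored to control this: the two hyperplanes intersect inside $C_\sigma$, the osculation edges live in $\Star(u)\cup\Star(v)$, and abelianness forces the osculating configuration to close up into a square, contradicting direct osculation. The main obstacle I anticipate is the second paragraph's geometric lemma: carefully identifying, for a vertex $w$ of $X_L^M(S)/H$ and an edge $e$ through it, exactly which cylinder stabilizers act and how an element of the abelian image acts on $\link(w)$ relative to the sub-octahedron spanned by edges with labels in $\Star(x)$, and deducing that abelian orbits cannot "jump" from an edge to a non-adjacent edge. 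Once that local picture is pinned down, both the self-osculation and inter-osculation cases fall out by the same mechanism, and Proposition~\ref{prop:vspec} (plus Theorem~\ref{thm:fourone}/\ref{thm:fourtwo} for the reduction) would upgrade this to the $\semi n\zz$ and residual-finiteness statements elsewhere.
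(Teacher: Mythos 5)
Your overall architecture (cylinders, cylinder equivalence, and reducing no-self-osculation/no-inter-osculation to ``the subgroup generated by the stabilizers of the cylinders through an edge meets each vertex stabilizer trivially'') is exactly the right frame, but the step you yourself flag as the ``main obstacle'' is a genuine gap, and it cannot be filled the way you suggest. You take $H=\ker\theta$ and hope that abelianness of $\theta(BB_J)$, $J=\Star(u)\cup\Star(v)$, forces the image of $BB_J$ in $Q$ to meet the image of a vertex stabilizer (a copy of $\pi(M,L)$) trivially, via a claim that an abelian group acting on the link keeps orbits inside a single simplex of $\sss(M)$. Nothing in the hypotheses supports this: the element $g$ moving $e$ to $e'$ may perfectly well be the image of a nontrivial element of $\pi(M,L)$ that also lies in $\theta(BB_J)$, and abelianness of $\theta(BB_J)$ places no constraint on that intersection. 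In fact Section~\ref{sec:nine} of the paper is a counterexample to your strategy: for $L$ the boundary of a square, $S=2\zz$ and the double cover $M$, any homomorphism $\theta$ onto $Q=C_2$ with torsion-free kernel satisfies all the hypotheses of the theorem (every image subgroup is abelian), yet $X_L^M(2\zz)/\ker\theta$ is shown there to have self-osculating and inter-osculating hyperplanes. So the quotient by $\ker\theta$ itself need not be special; the conclusion is only that $G_L^M(S)$ is \emph{virtually} special, and one must pass to a deeper finite-index subgroup.

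The paper's proof does exactly this. Let $m$ be the exponent of $Q$, let $\phi:G_L^M(S)\rightarrow BB_L\rightarrow H_1(BB_L;\zz/m\zz)$, and take $K=\ker(\theta,\phi)\leq\ker\theta$. Vertex stabilizers die under the map to $BB_L$, so every vertex stabilizer lands in $Q\times\{0\}$. On the other hand, since $J=\Star(u)\cup\Star(v)$ is simply connected, the subgroup of $G_L^M(S)$ generated by the edges of $J$ is a copy of $BB_J$, and $H_1(BB_J;\zz/m\zz)$ injects into $H_1(BB_L;\zz/m\zz)$; because $\theta(BB_J)$ is abelian of exponent dividing $m$, the restriction $\theta|_{BB_J}$ factors through this mod-$m$ abelianization, which is exactly what $\phi$ records faithfully. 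Hence $(\theta,\phi)(BB_J)\cap\bigl(Q\times\{0\}\bigr)=\{1\}$, and since the subgroup generated by the stabilizers of all cylinders through $e$ or $e'$ lies in $(\theta,\phi)(BB_J)$ while any element fixing the shared vertex lies in $Q\times\{0\}$, the trivial-intersection statement you needed follows for $K$ (not for $\ker\theta$), and then Proposition~\ref{prop:vspec} upgrades to the cocompact statement as you indicate. So the missing idea is the auxiliary homological quotient $\phi$: the abelianness hypothesis is used not to control the action on links, but to make $\theta|_{BB_J}$ a shadow of $\phi|_{BB_J}$ so that the two coordinates can be played against each other.
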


\begin{proof}
  We construct a homomorphism to a larger finite group whose kernel
  will be shown to be special.
  The abelianization $H_1(A_L;\zz)$ of the right-angled Artin
  group $A_L$ is free, with basis the set $L^0$, and the
  abelianization $H_1(BB_L;\zz)$ of $BB_L$ is the codimension one summand
  consisting of all elements $\sum_v n_v v$ with $\sum_v n_v=0$.
  Now let $m$ be the exponent of the finite group $Q$, and let
  $H=H_1(BB_L;\zz/m\zz)=H_1(BB_L;\zz)\otimes \zz/m\zz$.  The
  quotient map $\phi:G_L^M(S)\rightarrow G_L^M(\zz)=BB_L\rightarrow H$
  and $\theta$ together give a homomorphism 
  $(\theta,\phi):G_L^M(S)\rightarrow Q\times H$, and it is this map whose
  kernel $K$ will be shown to be special.  

  Note that every vertex stabilizer $\pi(M,L)$ is in the kernel of the
  map from $G_L^M(S)$ to $BB_L$, and so every copy of $\pi(M,L)$ is
  mapped by $(\theta,\phi)$ to a subgroup of $Q\times \{0\}$.

  As remarked earlier, hyperplanes in $X_L^M(S)/K$ are always 2-sided
  and never self-intersect, so we only need to rule out direct
  self-osculations and inter-osculations.  Suppose that $e$ and $e'$
  are adjacent edges of $X_L^M(S)/K$ of the same height that share a
  square, so that the hyperplanes they belong to intersect, and let
  $x$ be the vertex that is incident on both $e$ and $e'$.  We claim
  that no other edge of the same height that is cylinder equivalent to
  $e$ or to $e'$ can be incident on $x$.  Since hyperplanes are
  contained in cylinder equivalence classes, this will imply that
  there are no direct self-osculations or inter-osculations.

  To establish this claim, note that the labels in $L^0$ attached
  to $e$ and $e'$ are adjacent vertices $u,v$.  

  The stabilizer of an $n$-cylinder of $X_L^M(S)$ in $G_L^M(S)/K$
  is an abelian group.  If the cylinder is labelled by an $n$-simplex
  $\sigma$, then its stabilizer is the image in $G_L^M(S)/K$ of the
  free abelian subgroup $BB_\sigma\leq G_L^M(S)$ which is of rank
  $n$.  A cylinder of $X_L^M(S)/K$ that contains the edge $e$ 
  corresponds to a simplex $\tau$ of $L$ that contains $u$ and
  similarly, a cylinder that contains $e'$ corresponds to a simplex
  $\tau'$ of $L$ that contains $v$.  Any such $\tau,\tau'$ are
  contained in the subcomplex $J=\Star(u)\cup\Star(v)$ of $L$.
  Since $J$ is simply connected, the subgroup of $G_L^M(S)$
  generated by the edges of $J$ is isomorphic to $BB_J$, and
  hence the inclusion $J\rightarrow L$ induces a monomorphism
  $H_1(J;\zz/m\zz)\rightarrow H=H_1(BB_L;\zz/m\zz)$.  By hypothesis
  $\theta(BB_J)$ is an abelian subgroup of $Q$, necessarily of
  exponent dividing $m$.  But $\phi(BB_J)$ is the largest possible
  abelian quotient of $BB_J$ of exponent $m$, and so it follows
  that the image of $BB_J$ under $(\theta,\phi)$ has trivial
  intersection with $Q\times\{0\}$.  

  The claim now follows, since any element of
  $(\theta,\phi)(G_L^M(S))\leq Q\times H$ that
  fixes the vertex $x$ must lie in $Q\times \{0\}$, whereas
  any element that sends either $e$ or $e'$ to a cylinder
  equivalent edge must lie in $(\theta,\phi)(BB_J)$.
\end{proof}

\begin{proof} (Theorem~\ref{thm:vtorfreeimpliesvspec}.)
  The kernel of the map $G_{L'}^{M'}(S)\rightarrow G_L^M(S)$ is
  torsion-free, and so if $G_L^M(S)\rightarrow Q$ is any homomorphism
  with torsion-free kernel, then the composite
  $G_{L'}^{M'}(S)\rightarrow Q$ also has torsion-free kernel.  
  Since for any two adjacent vertices $u,v$ of $L'$, the image
  of $J=\Star(u)\cup\Star(v)$ is contained in a single simplex
  of $L$, the image in $G_L^M(S)$ of the subgroup $BB_J$ is abelian
  and hence so is its image in $Q$.  Since $G_{L'}^{M'}(S)$ is
  virtually special and $S+n=S$, Proposition~\ref{prop:vspec} implies that
  $G_{L'}^{M'}(S)\semi n\zz$ is virtually special.  
  \end{proof}

\begin{corollary}\label{cor:abelian}
  Suppose that $M$ is a finite cover of $L$ and that 
  there is a homomorphism $G_L^M(S)\rightarrow Q$
  with torsion-free kernel, with $Q$ is a finite abelian group.
  Then $G_L^M(S)$ is virtually (non-cocompact) special, and if
  $S+n=S$ then $G_L^M(S)\semi n\zz$ is virtually special.
\end{corollary}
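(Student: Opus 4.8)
The plan is simply to observe that the hypotheses of Theorem~\ref{thm:bothvspec} are automatically met when the target group is abelian, and then to quote Proposition~\ref{prop:vspec} for the statement about $G_L^M(S)\semi n\zz$.

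First I would take $\theta\colon G_L^M(S)\rightarrow Q$ to be the given homomorphism with torsion-free kernel. Since $Q$ is abelian, every subgroup of $Q$ is abelian; in particular, for any two adjacent vertices $u,v$ of $L$, the image under $\theta$ of the subgroup of $G_L^M(S)$ generated by the edges of $\Star(u)\cup\Star(v)$ is a subgroup of $Q$, and hence abelian. So the combinatorial condition on $\theta$ in the statement of Theorem~\ref{thm:bothvspec} holds for free, and that theorem immediately gives that $G_L^M(S)$ is (non-cocompact) virtually special. Recall moreover from Section~\ref{sec:back} that, thanks to the height function, every hyperplane of a quotient $X_L^M(S)/K$ is two-sided, so ``special'' and ``$A$-special'' coincide here and there is nothing further to check on the two-sidedness side.

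For the second assertion, suppose $S+n=S$. Since $M\rightarrow L$ is a finite regular cover (all coverings in this paper are regular) and $G_L^M(S)$ is virtually (non-cocompact) special by the previous paragraph, Proposition~\ref{prop:vspec} applies verbatim and yields that $G_L^M(S)\semi n\zz$ is virtually special.

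Every step is a direct appeal to a result already proved, so I do not expect any genuine obstacle. The only point that deserves a moment's attention is the bookkeeping one noted above — that the non-cocompact cube complexes in play are special exactly when $A$-special — and this has already been dealt with in the background section. The real content of the corollary is just the observation that taking $Q$ abelian makes the hypothesis of Theorem~\ref{thm:bothvspec} vacuous.
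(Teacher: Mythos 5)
Your proposal is correct and matches the paper's own proof, which is exactly the one-line deduction from Theorem~\ref{thm:bothvspec} and Proposition~\ref{prop:vspec}; your observation that an abelian $Q$ makes the hypothesis of Theorem~\ref{thm:bothvspec} automatic is precisely the content the paper leaves implicit.
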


\begin{proof}
  Follows from Theorem~\ref{thm:bothvspec} and
  Proposition~\ref{prop:vspec}.  
\end{proof}

\section{Covers pulled back from graphs} \label{sec:seven}

In this section we prove Theorem~\ref{thm:vtorfree}.  We start with a
Proposition in finite group theory.  Let $A_N$ and $S_N$ denote the
alternating and symmetric groups on a finite set of size $N$, let
$C_n$ denote a finite cyclic group of order $n$, The wreath product
$S_N\wr C_n$ is a finite group containing a normal subgroup isomorphic
to the direct product $(S_N)^n$ of $n$ copies of $S_N$, with quotient
the cyclic group $C_n$.  
If $\rho$ is a generator for the cyclic group $C_n$, conjugation by
$\rho$ permutes the $n$~copies of $S_N$ freely.  If
$\alpha$ denotes a permutation in
$S_N$, we write $\alpha_i$ with $1\leq i\leq n$ for the element of
the product $(S_N)^n$ that has its $i$th component equal to $\alpha$
and the other components equal to the identity.  In the usual notation 
for elements of a direct product, $\alpha_i$ would be written
as $(1,\ldots,1,\alpha,1,\ldots,1)$.  For permutations $\alpha,\beta$,
the elements $\alpha_i$ and $\beta_j$ commute if $i\neq j\in \zz/n\zz$,
while $\alpha_i\beta_i = (\alpha\beta)_i$, and the conjugation action
of $\rho$ is given by
   \[
   \rho\alpha_i\rho^{-1}=\begin{cases}
   \alpha_{i+1} &i<n,\\
   \alpha_1 & i=n.
   \end{cases}
  \] 

  \begin{proposition}\label{prop:hom}
    Let $\alpha,\beta$ be elements of $S_N$ and let
    $\sigma$ be the commutator 
    $\alpha\beta\alpha^{-1}\beta^{-1}$.  For $1\leq k<n$ define
    elements $a,b,c,d\in S_N\wr C_n$, depending on $k$ as well as
    $N$~and~$n$, by the formulae 
     \[a= \rho,\quad
     b= \alpha_n\rho^{-1}\alpha_n^{-1}, \quad
     c= \alpha_n\beta_k\rho\beta_k^{-1}\alpha_n^{-1},\quad
     d= \beta_k\rho^{-1}\beta_k^{-1}.\]
     For each $k$, the elements $a,b,c,d$ all have order $n$,
     and for any integer $j$,
     \[a^jb^jc^jd^j = \begin{cases}
       \sigma_k& j\equiv k \,\,\hbox{modulo}\,\,n,\\
       1& j\not\equiv k \,\,\hbox{modulo}\,\,n. 
     \end{cases}\] 
   \end{proposition}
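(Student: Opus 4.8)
The plan is to compute the product $a^jb^jc^jd^j$ directly inside $S_N\wr C_n$, using only the two structural facts recalled just above the statement: conjugation by $\rho$ cyclically shifts the indices of the factors $\alpha_i$, and two such factors $\alpha_i$, $\beta_{i'}$ with $i\neq i'$ in $\zz/n\zz$ commute. No auxiliary machinery is needed; it is an explicit identity.

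First I would record the orders. Since $a=\rho$ generates $C_n$ it has order exactly $n$, while $b$, $c$, $d$ are conjugates of $\rho^{-1}$, $\rho$, $\rho^{-1}$ respectively, so each of them also has order $n$. Because conjugation commutes with taking powers, for every integer $j$ we have $a^j=\rho^j$, $b^j=\alpha_n\rho^{-j}\alpha_n^{-1}$, $c^j=\alpha_n\beta_k\rho^j\beta_k^{-1}\alpha_n^{-1}$ and $d^j=\beta_k\rho^{-j}\beta_k^{-1}$.

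Next I would multiply these four expressions together. The trailing $\alpha_n^{-1}$ of $b^j$ cancels the leading $\alpha_n$ of $c^j$, leaving
\[a^jb^jc^jd^j=\rho^j\alpha_n\rho^{-j}\cdot\beta_k\cdot\rho^j\beta_k^{-1}\alpha_n^{-1}\beta_k\rho^{-j}\cdot\beta_k^{-1}.\]
Writing $u=\rho^j$, the hypothesis $1\le k<n$ gives $k\neq n$ in $\zz/n\zz$, so $\beta_k$ commutes with $\alpha_n$ and hence $\beta_k^{-1}\alpha_n^{-1}\beta_k=\alpha_n^{-1}$; the displayed word therefore collapses to $(u\alpha_nu^{-1})\,\beta_k\,(u\alpha_nu^{-1})^{-1}\,\beta_k^{-1}$. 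Finally $u\alpha_nu^{-1}=\rho^j\alpha_n\rho^{-j}=\alpha_{n+j}$, where the subscript is read modulo $n$, so
\[a^jb^jc^jd^j=\alpha_{n+j}\,\beta_k\,\alpha_{n+j}^{-1}\,\beta_k^{-1}.\]
When $j\equiv k$ modulo $n$ this is the commutator $\alpha\beta\alpha^{-1}\beta^{-1}$ placed entirely in the $k$-th factor, namely $\sigma_k$; when $j\not\equiv k$ modulo $n$ the factors $\alpha_{n+j}$ and $\beta_k$ are distinct, hence commute, and the product is trivial. This is precisely the asserted formula.

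There is no substantial obstacle: the statement is an identity and the proof is a bookkeeping exercise. The only points requiring attention are the index arithmetic modulo $n$ — in particular that the subscript $n$ labels the same factor as $n+j$ exactly when $j\equiv 0$ — and noticing that the restriction $k<n$ is used in exactly one place, to make $\beta_k$ commute with $\alpha_n$.
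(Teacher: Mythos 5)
Your proof is correct and follows essentially the same route as the paper: both compute $a^jb^jc^jd^j$ directly, cancel the inner $\alpha_n^{-1}\alpha_n$, use that $\beta_k$ commutes with $\alpha_n$ (which needs $1\le k<n$), and conjugate by $\rho^j$ to land the commutator in the $k$-th factor. The only cosmetic difference is that the paper first reduces to $1\le j<n$ while you carry the index arithmetic modulo $n$ throughout; both are fine.
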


   \begin{proof}
     Each of $a,b,c,d$ is a conjugate of
     either $\rho$ or $\rho^{-1}$, so each has order $n$ as
     claimed.  From this is follows that it suffices to check
     the claim concerning the order of $a^jb^jc^jd^j$ for $1\leq j<n$.
     For $j$ in this range, since $\alpha_n$ commutes with $\beta_k$
     we see that 
     \begin{align*} 
     a^jb^jc^jd^j&= \rho^j(\alpha_n\rho^{-j}\alpha_n^{-1})
     (\alpha_n\beta_k\rho^j\beta_k^{-1}\alpha_n^{-1})
     (\beta_k\rho^{-j}\beta_k^{-1})\\
     &=\rho^j\alpha_n\rho^{-j}(\alpha_n^{-1}\alpha_n\beta_k)\rho^j
     (\beta_k^{-1}\alpha_n\beta_k)\rho^{-j}\beta_k^{-1}\\
     &=(\rho^j\alpha_n\rho^{-j})
     \beta_k(\rho^j\alpha_n^{-1}\rho^{-j})\beta_k^{-1}\\
     &= \alpha_j\beta_k\alpha_j^{-1}\beta_k^{-1}=\begin{cases}
     \sigma_k& j=k\\
     1&j\neq k.  
     \end{cases} 
     \end{align*}
   \end{proof}

   \begin{theorem}\label{thm:vtfone} 
     Let $\overline{\Delta}\rightarrow\overline{\Gamma}$ be a
     finite regular covering of graphs, and let $\Gamma$ be a  
     simplicial graph obtained by subdividing each edge of
     $\overline{\Gamma}$ into at least $r$ parts, with $\Delta$
     the corresponding covering of $\Gamma$.  For any periodic
     set $S\subseteq \zz$ and any $r\geq 4$, the group $G_\Gamma^\Delta(S)$
     is virtually torsion-free.
   \end{theorem}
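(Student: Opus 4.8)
The plan is to apply Corollary~\ref{cor:abelian}: it suffices to produce a homomorphism $\theta: G_\Gamma^\Delta(S)\rightarrow Q$ with $Q$ finite and $\ker\theta$ torsion-free. Since $S$ is periodic, say $S+n=S$, the torsion in $G_\Gamma^\Delta(S)$ comes exactly from the subgroups isomorphic to $\pi=\pi(\Delta,\Gamma)$, one conjugacy class $\pi(i)$ for each $i\in\zz$, and the class $\pi(i)$ maps to the identity in $G_\Gamma^\Delta(S)$ precisely when $i\in S$. So I need $\theta$ to be injective on each $\pi(i)$ for $i\notin S$ (it is automatically trivial on $\pi(i)$ for $i\in S$, since those are already trivial in the group). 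By periodicity there are only $n$ conjugacy classes of these subgroups to worry about, indexed by $i\in\zz/n\zz\setminus(S\bmod n)$, so it is enough to find, for each such residue $k$, a map killing nothing in $\pi(k)$; then combine finitely many such maps into one map to a product.

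The target will be a wreath product $S_N\wr C_n$ as set up just before Proposition~\ref{prop:hom}, where $N$ is chosen large enough that $\pi$ embeds in $A_N\leq S_N$ (take $N=|\pi|$, acting by left translation, so every non-identity element acts as a fixed-point-free permutation, in particular an even one when we pad appropriately). The key point is the combinatorial structure of $\Gamma$: because every edge of $\overline\Gamma$ has been subdivided into at least $r\geq 4$ pieces, a directed edge loop in $\Gamma$ representing a given deck transformation $g\in\pi$ is built out of long subdivided segments, and along a subdivided segment all the directed-edge generators are forced (by the relators $ab c=1$ for triangles — here there are no triangles, so in fact the consecutive edges of a subdivided segment freely generate nothing extra) to behave like powers of a single element. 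Concretely, I would send each directed edge of $\Gamma$ lying on the interior of a subdivided $\overline\Gamma$-edge to $\rho$ or $\rho^{-1}$ (the generator of $C_n$), and use the $r\geq 4$ "slots" on a chosen pair of $\overline\Gamma$-edges to insert conjugating factors $\alpha_n,\beta_k$ exactly as in the formulae for $a,b,c,d$ in Proposition~\ref{prop:hom}. Having $r\geq 4$ interior vertices on an edge is precisely what lets me realize the four elements $a,b,c,d$ along one edge-path without collisions; this is where the bound $r\geq 4$ is used. One must check this assignment respects all the defining relators of $G_\Gamma^\Delta(S)$: the relators $a_1^na_2^n\cdots a_l^n=1$ for $n\in S$ hold because every generator maps to an element of order dividing $n$ (each is a conjugate of $\rho^{\pm1}$), and $S$ is a union of residues mod $n$; and the relators for loops that lift to $\Delta$ hold because a lifting loop represents $g=1\in\pi$, and the homomorphism, when restricted to loop-words, computes (via Proposition~\ref{prop:hom}) an element of the base $(S_N)^n$ that depends only on the image of the loop in $\pi$ together with its "height" $n$ — for $g=1$ this is trivial.

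Then for a non-lifting loop $\gamma_g$ representing $g\in\pi$ with $g\neq 1$, Proposition~\ref{prop:hom} (applied with $\alpha,\beta$ chosen so that the commutator $\sigma=[\alpha,\beta]$ is nontrivial — possible since $g$ acts nontrivially, using that $S_N$ is sufficiently nonabelian) shows $\theta(\gamma_g[k]\,g^{\mathrm{stuff}})$ picks out $\sigma_k\neq 1$ in the $k$-th coordinate when the height is $\equiv k$, hence $\theta$ does not kill the corresponding element of $\pi(k)$. Ranging over the finitely many needed residues $k$ and the finitely many generators of $\pi$, and taking $\alpha,\beta$ to separate all of them, gives a single $\theta$ whose kernel meets every $\pi(i)$ trivially for $i\notin S$, hence is torsion-free; Corollary~\ref{cor:abelian} finishes it. The main obstacle is bookkeeping: one must lay out an explicit directed-edge-labelling of all of $\Gamma$ (not just one loop) that is globally consistent — i.e. well-defined on the presentation — and simultaneously realizes enough copies of the Proposition~\ref{prop:hom} configuration to detect every generator of $\pi$ in every relevant residue class; verifying the non-lifting-loop relators and the $n\in S$ relators are respected is the technical heart, and the hypothesis $r\geq 4$ is exactly the room needed to fit the four conjugating insertions on the available interior vertices of an edge.
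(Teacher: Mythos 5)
Your outline follows the same route as the paper: reduce to producing a homomorphism to a finite group whose kernel is torsion-free, i.e.\ one in which every element $\gamma[j]$ with $j\notin S$ and $\gamma$ a non-lifting loop survives; use the wreath product $S_N\wr C_n$ and Proposition~\ref{prop:hom}, with the four insertions fitting on the $r\geq 4$ pieces of each subdivided edge; and handle a general periodic $S$ by taking a product of the maps attached to the residues $k\in\{1,\ldots,n-1\}\setminus S$. Two framing points first. The appeal to Corollary~\ref{cor:abelian} is not legitimate: that corollary requires the target $Q$ to be a finite \emph{abelian} group, and $S_N\wr C_n$ is not abelian (this is why the paper needs $r\geq 12$ and the factorization through a coarser subdivision before it can invoke Theorem~\ref{thm:bothvspec} for the specialness/residual-finiteness conclusions). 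It is also unnecessary here: a homomorphism to any finite group with torsion-free kernel already gives virtual torsion-freeness, which is all Theorem~\ref{thm:vtfone} claims. Also the cases $S=\emptyset$ (where the group is $BB_\Delta\semi\pi(\Delta,\Gamma)$) and the normalization $0\in S$ should be dispatched at the outset, since the presentation you use assumes $0\in S$.

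The genuine gap is precisely the step you set aside as ``bookkeeping'': the global labelling of the edges of $\Gamma$ is the heart of the proof, and your sketch of it would fail as stated. Sending \emph{every} interior edge of a subdivided $\overline{\Gamma}$-edge to $\rho^{\pm1}$ destroys the computation (a full traversal then contributes extra factors of $\rho^{\pm j}$ beyond the four of Proposition~\ref{prop:hom}, so the relators with $j\in S$ are no longer killed), and inserting the conjugating factors $\alpha,\beta$ only on ``a chosen pair'' of $\overline{\Gamma}$-edges cannot encode the whole covering. The paper instead chooses a maximal tree in $\overline{\Gamma}$, obtaining a labelling $\sigma$ of the directed edges of $\overline{\Gamma}$ by elements of $A_N$ (after embedding $\pi(\Delta,\Gamma)$ in $A_N$) such that a loop lifts to $\overline{\Delta}$ if and only if its $\sigma$-value is trivial; then, for \emph{each} edge $e$, it uses Ore's theorem that every element of $A_N$ is a commutator of two elements of $S_N$ to choose $\alpha(e),\beta(e)$ with $[\alpha(e),\beta(e)]=\sigma(e)$, labels the first four pieces of $e$ by the corresponding $a(e),b(e),c(e),d(e)$ of Proposition~\ref{prop:hom}, and labels the remaining pieces by the identity. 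Your ``choose $\alpha,\beta$ so that the commutator is nontrivial, since $S_N$ is sufficiently nonabelian'' misses the essential requirement that the commutator equal the \emph{prescribed} monodromy value $\sigma(e)$ --- this is exactly where Ore's theorem enters, and without it injectivity on the copies of $\pi(\Delta,\Gamma)$ is not achieved. Likewise, ``every generator has order dividing $n$'' only shows that the product of $j$-th powers depends on $j$ modulo $n$; triviality for $j\in S$ (and the correct value for $j\notin S$) requires the explicit computation of Proposition~\ref{prop:hom} together with the observation that, because interior vertices of subdivided edges have valence two, every directed edge loop in $\Gamma$ decomposes into full traversals and backtracks of subdivided edges, so its $j$-th power product is the product of the corresponding $\sigma(e)^{\pm1}_k$ in the $k$-th coordinate.
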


   \begin{proof}
     If $S=\emptyset$ then $G_\Gamma^\Delta(S)$ is the semidirect
     product $BB_\Delta\semi \pi(\Delta,\Gamma)$ which is clearly
     virtually torsion-free.  In all other cases, we may assume up
     to isomorphism that $0\in S$, and we do so for the rest of this
     proof.
     
     Embed the group $\pi(\overline{\Delta},\overline{\Gamma})=
     \pi(\Delta,\Gamma)$ into a finite alternating group $A_N$.
     Choose a maximal tree $T\subseteq \overline{\Gamma}$, and
     fix an orientation on the edges of $\overline{\Gamma}-T$, so that the 
     fundamental group of $\overline{\Gamma}$ is naturally isomorphic to the
     free group on the set of edges of ${\overline\Gamma}-T$.  The covering
     thus gives rise to a labelling $\sigma$ of the directed
     edges of $\overline{\Gamma}$ by elements of $A_N$, with the properties
     that every edge of $T$ is labelled by the identity element
     and that the product of the labels on the two different
     orientations of the same edge is the identity.  The labelling
     $\sigma$ associates an element of $A_N$ to each directed edge path
     in $\overline{\Gamma}$: if the directed edge path is
     $e_1,e_2,\ldots,e_l$, the associated element is
     $\sigma(e_1)\sigma(e_2)\cdots \sigma(e_l)$.
     By definition, the element of $A_N$
     associated to a closed directed edge path will be the identity
     if and only if this path lifts to a closed path in
     $\overline{\Delta}$.

     The relators in the presentation for $G_\Gamma^\Delta(S)$ given
     in Section~\ref{sec:back} are of the form $e_1^je_2^j\cdots
     e_l^j$, where $e_1,\ldots,e_j$ is a closed directed edge path in
     $\Gamma$ and either $j\in S$ or the path lifts to a closed path
     in $\Delta$.  Moreover, every non-identiy element of finite order
     in $G_\Gamma^\Delta(S)$ is conjugate to an element of the form
     $e_1^je_2^j\cdots e_l^j$, where $j\notin S$ and $e_1,\ldots,e_l$
     is a closed directed edge path in $\Gamma$ whose lift to $\Delta$
     is not a closed path.  It follows that to construct a
     homomorphism with torsion-free kernel from $G_\Gamma^\Delta(S)$
     to a finite group, it suffices to construct a labelling $\mu$ of
     the directed edges of $\Gamma$ by the elements of a finite group
     so that for $j\in S$ and for every closed directed edge path
     $e_1,\ldots,e_l$ in $\Gamma$, $\mu(e_1)^j\mu(e_2)^j\cdots
     \mu(e_l)^j=1$, while for $j\notin S$ we have that
     $\mu(e_1)^j\mu(e_2)^j\cdots \mu(e_l)^j=1$ if and only if
     $e_1,\ldots,e_l$ lifts to a closed path in $\Delta$.

     Fix some $n>0$ with $S+n=S$.  First we consider the case
     when $S=\zz-(k+n\zz)$ for some $k$ with $1\leq k<n$.  In
     this case, the finite group that will be the
     target of our labelling $\mu$ is the wreath product $S_N\wr
     C_n$.  To ease the notation, we fix an orientation on each of the
     edges of $\overline{\Gamma}$ and define the labelling $\mu$
     on those directed edges of $\Gamma$ that are oriented in
     the same direction as our chosen orientation on the edge
     of $\overline{\Gamma}$ that they are contained in.  
     If $e$ is a directed edge of $\Gamma$ with our chosen
     orientation, $\sigma(e)$ is an element of the alternating
     group $A_N$.  It is known that every element of $A_N$ is
     equal to the commutator of a pair of elements of
     $S_N$~\cite{ore}.  Hence we may choose 
     $\alpha(e),\beta(e)\in S_N$ with $\sigma(e)=\alpha(e)\beta(e)
     \alpha(e)^{-1}\beta(e)^{-1}$.  Define elements
     $a(e),b(e),c(e),d(e)$ of $S_N\wr C_n$ as in the statement of 
     Proposition~\ref{prop:hom}.  If the directed path in $\Gamma$
     that maps homeomorphically to $e$ with its given orientation
     is $e_1,\ldots,e_r$ where $r\geq 4$, define the labelling
     $\mu$ on these edges by
     \[\mu(e_i)=\begin{cases} 
     a(e)&i=1\\
     b(e)&i=2\\
     c(e)&i=3\\
     d(e)&i=4\\
     1&i>4.
     \end{cases}\]
     By Proposition~\ref{prop:hom}, for this labelling we have
     that
     \[\mu(e_1)^j\mu(e_2)^j\mu(e_3)^j\cdots\mu(e_r)^j=\begin{cases}
     \sigma(e)_k&j\equiv k\,\,\hbox{modulo}\,\,n\\
     1&j\not\equiv k \,\,\hbox{modulo}\,\,n.
     \end{cases}
     \]
     Here $\sigma(e)_k$ denotes the copy of $\sigma(e)$ inside the
     $k$th direct factor in $(S_N)^n<S_N\wr C_n$.  This completes
     the proof in the case when $S=\zz-(k+n\zz)$.

     For the general case, rename the labelling $\mu$ used above
     as $\mu^{(k)}$, to emphasize the dependence on $k$.  If $S$
     is any set with $0\in S$ and $S+n=S$, define a finite set
     $\{k_1,\ldots,k_l\}$ as $\{1,\ldots,n-1\}-S$.  For this $S$,
     define a new labelling $\bmu$ of the edges of $\Gamma$ by 
     elements of $(S_N\wr C_n)^l$,
     where the label attached to the edge $e$ of $\Gamma$ is
     \[\bmu(e)=(\mu^{(k_1)}(e),\mu^{(k_2)}(e),\ldots,\mu^{(k_l)}(e)).\]
     The labelling $\bmu$ has the property that the product of
     the $j$th powers of the labels around any closed path in
     $\Gamma$ is the identity if $j\in S$, whereas for $j\in S$
     the product of the $j$th powers of the labels around a
     closed path is equal to the identity if and only if the
     path lifts to a closed path in $\Delta$.  Hence the kernel
     of the corresponding homomorphism is torsion-free as required.
\end{proof}

\begin{proof} (Theorem~\ref{thm:vtorfree})
  By Theorem~\ref{thm:vtfone} we see that the group $G_\Gamma^\Delta(S)$ is
  virtually torsion-free in the case when $S$ is periodic and $\Gamma$
  is a graph obtained from another graph $\overline{\Gamma}$ by
  subdividing each edge into at least four pieces.  If $M\rightarrow
  L$ is a covering obtained by pulling back the regular covering
  $\Delta\rightarrow \Gamma$ along a simplicial map $f:L\rightarrow
  \Gamma$, then $\pi(M,L)$ is a subgroup of $\pi(\Delta,\Gamma)$,
  and every finite subgroup of $G_L^M(S)$ maps isomorpically to a
  subgroup of $G_\Gamma^\Delta(S)$ under the map induced by $f$.
  Hence in this case $G_L^M(S)$ is also virtually torsion-free.  

  Now suppose that $\widehat{\Gamma}$ is obtained from
  $\overline{\Gamma}$ by subdividing each edge into exactly 4~pieces,
  and that $\Gamma$ is obtained from $\overline{\Gamma}$ by
  subdividing each edge into at least 12 pieces.  In this case, there
  is a map from $\Gamma$ to $\widehat{\Gamma}$ with the property that
  the image of any three consecutive edges of $\Gamma$ is either a
  vertex or a single edge of $\widehat{\Gamma}$.  In more detail,
  suppose that $\overline{e}$ is a directed edge of
  $\overline{\Gamma}$ that is subdivided into
  $\widehat{e}_1,\widehat{e}_2, \widehat{e}_3,\widehat{e}_4\in
  \widehat{\Gamma}$ and into $e_1,\ldots,e_r\in\Gamma$ with $r\geq
  12$.  In this case, such a map is given explicitly by mapping
  $e_2,e_5,e_{r-4},e_{r-1}$ homeomorphically to the edges
  $\widehat{e}_1,\widehat{e}_2, \widehat{e}_3,\widehat{e}_4$
  respectively and collapsing each other $e_j$ to a point.
  Thus if $M\rightarrow L$ is obtained by pulling back the
  covering $\Delta\rightarrow \Gamma$, then both 
  the hypotheses of Theorem~\ref{thm:bothvspec} and
  Proposition~\ref{prop:vspec} are satisfied and $G_L^M(S)\semi n\zz$
  is virtually special, which implies that Conjectures
  \ref{conj:vtorfree}~and~\ref{conj:resfin} hold in this case.
  \end{proof} 

\begin{remark}
  In Theorem~\ref{thm:vtorfree} and Theorem~\ref{thm:vtfone},
  the hypothesis that each
  edge of $\overline{\Gamma}$ be subdivided into at least $r$
  pieces can be replaced by a slightly weaker hypothesis:
  it is sufficient for each edge of $\overline{\Gamma}-T$ to be
  subdivided into at least $r$ pieces.  
\end{remark} 

\begin{proof} (Corollary~\ref{cor:vtorfree})
  Let $\overline{\Gamma}$ be a rose (i.e., a 1-dimensional
  CW-complex with one vertex) whose fundamental group is
  isomorphic to the free group $F$, and fix such an isomorphism.
  A standard argument of obstruction theory shows that the 
  homomorphism $f:\pi_1(L)\rightarrow F$ is induced by some 
  continuous map
  $\phi:|L|\rightarrow \overline{\Gamma}$~\cite[Ch.~4.3]{hatcher}.  To
  see this, view $|L|$ as a CW-complex, with cells the topological
  realizations of the simplices of $L$.  Pick a maximal tree $T$ in $L$,
  and send every 0-cell and every 1-cell in $|T|$ to the 0-cell of
  the rose.  Every other 1-cell $|\sigma|$ of $|L|$ represents a unique word
  in the free generators of $F$, and this word can be used to define
  $\phi|_{|\sigma|}$.  Assume by induction that the map $\phi$ has been
  defined on the $n-1$-skeleton of $L$ for some $n\geq 2$.  Since the
  universal cover of the rose $\overline{\Gamma}$ is a tree, the higher
  homotopy groups of $\overline{\Gamma}$ are all trivial.  Thus for each
  $n$-simplex $\sigma$, the map from the $(n-1)$-sphere to $|\overline{\Gamma}|$
  defined as the restriction of $\phi$ to the boundary of $\sigma$ can be
  extended to a map from the $n$-disc to $|\overline{\Gamma}|$.  By doing
  this for each $n$-simplex, one extends $\sigma$ to the $n$-skeleton of
  $|L|$.

  Let $\Gamma$ be obtained from the rose $\overline{\Gamma}$ by
  subdividing each edge into 12~pieces.  
  By the simplicial approximation theorem~\cite[Ch.~2.C]{hatcher},
  there is an iterated barycentric subdivision $L'$ of $L$ with
  respect to which the map $\phi:|L'|=|L|\rightarrow |\Gamma|=
  |\overline{\Gamma}|$ is homotopic to a simplicial map
  $\psi:L'\rightarrow \Gamma$.  If $\Delta$ is the regular covering
  of $\Gamma$ corresponding to the finite-index normal subgroup
  $N\triangleleft F$, the induced cover of $L'$ is a (possibly
  not connected) regular covering of $L'$ with $F/N$ as its
  group of deck transformations.  The fundamental group of
  each component of this covering is $f^{-1}(N)$, and we may
  take $M'$ to be one of these components.
\end{proof}

\section{Some torsion-free-by-cyclic examples}\label{sec:eight}

\begin{proposition}\label{prop:homology} 
  Let $p$ be a prime and suppose that $S=p\zz$ and that $M\rightarrow L$
  is a connected $p$-fold regular covering.  Then $G_L^M(S)\semi p\zz$
  is virtually special and so all of our conjectures hold in this case.
\end{proposition}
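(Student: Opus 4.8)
The plan is to invoke Corollary~\ref{cor:abelian}. Since $S=p\zz$ satisfies $S+p=S$ and the deck group $\pi(M,L)$ has order $p$ (in particular it is finite), it suffices to produce a homomorphism $\theta\colon G_L^M(S)\rightarrow Q$ to a finite \emph{abelian} group $Q$ whose kernel is torsion-free: Corollary~\ref{cor:abelian} then gives that $G_L^M(S)$ is virtually (non-cocompact) special and that $G_L^M(S)\semi p\zz$ is virtually special. Since virtually special groups are virtually torsion-free and residually finite by~\cite{hw}, and since the hypotheses of Conjectures~\ref{conj:vtorfree} and~\ref{conj:resfin} on $\pi(M,L)$ and $S$ hold automatically here (a group of order $p$ is virtually torsion-free and residually finite, and $p\zz$ is periodic and profinitely closed), all of our conjectures follow for this case.

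To build $\theta$, I would use that a connected $p$-fold regular covering $M\rightarrow L$ is classified by a surjection $\pi_1(L)\twoheadrightarrow \zz/p\zz$, equivalently by a nonzero class in $H^1(L;\zz/p\zz)$, which I represent by a simplicial $1$-cocycle: a function $c$ on the directed edges of $L$ with $c(\bar a)=-c(a)$ and $c(a)+c(b)+c(c)=0$ for every directed $3$-cycle spanning a triangle. With this data, a closed directed edge path $\gamma=(a_1,\dots,a_l)$ in $L$ lifts to a loop in $M$ if and only if $\sum_{i}c(a_i)=0$ in $\zz/p\zz$. Because $0\in S$, I may use the presentation of $G_L^M(S)$ from Section~\ref{sec:back} with generating set the directed edges of $L$, and define $\theta\colon G_L^M(S)\rightarrow\zz/p\zz$ on generators by $\theta(a)=c(a)$.

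The remaining checks are short. For well-definedness, every relator is of the form $\gamma[j]=a_1^ja_2^j\cdots a_l^j$ where $\gamma$ is a closed edge path and either $j\in S=p\zz$ or $\gamma$ lifts to a loop in $M$; then $\theta(\gamma[j])=j\sum_i c(a_i)$ in $\zz/p\zz$, which vanishes when $p\mid j$ and also when $\gamma$ lifts, since in that case $\sum_i c(a_i)=0$. For the kernel, recall (from the description of vertex stabilizers in Section~\ref{sec:back}, as used in the proof of Theorem~\ref{thm:vtfone}) that every non-identity torsion element of $G_L^M(S)$ is conjugate to some $\gamma[j]$ with $j\notin S$, i.e. $p\nmid j$, and with $\gamma$ a closed edge path in $L$ whose lift to $M$ is not closed; for such $\gamma$ one has $\sum_i c(a_i)\neq 0$, and since $j$ is invertible mod $p$, $\theta(\gamma[j])=j\sum_i c(a_i)\neq 0$. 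Hence the normal subgroup $\ker\theta$ meets no conjugacy class of non-trivial torsion elements, so it is torsion-free, and Corollary~\ref{cor:abelian} finishes the proof. The one point that needs a little care is matching the combinatorics of the cocycle $c$ with the loop-lifting criterion, but this is just the standard dictionary between $p$-fold regular covers and classes in $H^1(-;\zz/p\zz)$, so I do not expect any genuine obstacle.
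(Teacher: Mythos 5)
Your proposal is correct and is essentially the paper's own argument: the paper also takes a cocycle $f\colon L^1\rightarrow\ff_p$ representing the class in $H^1(L;\zz/p\zz)$ classifying the cover, extends it to a homomorphism $G_L^M(S)\rightarrow\ff_p$ with torsion-free kernel, and concludes via Corollary~\ref{cor:abelian}. The only difference is that you spell out the well-definedness on relators and the torsion-freeness of the kernel (via torsion elements being conjugate to $\gamma[j]$ with $p\nmid j$ and $\gamma$ non-lifting), details the paper leaves as ``easily seen.''
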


\begin{proof}
  The $p$-fold covering $M$ is classified by an element of
  $H^1(L;\zz/p\zz)$, so let $f:L^1\rightarrow \zz/p\zz=\ff_p$ be a
  cocycle representing this cohomology class.  The cocycle
  $f$ extends to a group homomorphism $G_L^M(S)\rightarrow \ff_p$
  and this homomorphism is easily seen to have torsion-free kernel.
  The claim now follows from Corollary~\ref{cor:abelian}.
\end{proof}

\section{An example in detail} \label{sec:nine}

We consider now the simplest case of Proposition~\ref{prop:homology}
in detail; the case
when $p=2$, $S=2\zz$ and $L$ is the boundary of a square.  Let the edges
of $L$ be labelled $a,b,c,d$ so that a group presentation for
$G_L^M(S)$ is
\[\langle a,b,c,d \,\,:\,\, a^{2n}b^{2n}c^{2n}d^{2n} = 1= (a^nb^nc^nd^n)^2,
\,\, n\in \zz\rangle.\]
There are fifteen index two subgroups of $G_L^M(S)$, indexed by the
subset of the generators $\{a,b,c,d\}$ consisting of elements not in
the subgroup.  The torsion-free subgroups are those in which
$abcd$ is not contained in the subgroup, or equivalently the
set of generators not in the subgroup has odd cardinality.
The order four rotation of $L$ induces a group of four automorphisms
of $L$, so up to isomorphism there are only two cases to consider:
the subgroup containing $b,c,d$ but not $a$ and the subgroup
containing $d$ but not containing $a,b,c$.

The space $X_L/BB_L$, which is a classifying space for $BB_L=G_L^M(\zz)$,
consists of a union of four 2-dimensional cylinders.  Label the four
vertices of $L$ by $w,x,y,z$, so that the directed edges are
$a=(w,x)$, $b=(x,y)$, $c=(y,z)$ and $d=(z,w)$.  Let $P$ denote a copy
of the plane $\rr^2$, tesselated by squares with vertex set $\zz^2$
and 1-skeleton $(\zz\times \rr)\cup (\rr\times \zz)$.  Each of the four
cylinders making up $X_L/BB_L$ is isomorphic to the quotient of $P$
by the subgroup generated by $(-1,1)$, with the height function
on $P$ and on $P/\langle (-1,1)\rangle$ given by $(s,t)\mapsto s+t$.
In $P/\langle (-1,1)\rangle$, the images of the horizontal edges
all belong to one hyperplane and the images of the vertical edges
all belong to a second hyperplane.

If $H$ is any of the eight torsion-free index two subgroups of
$G_L^M(2\zz)$, then $X_L^M(2\zz)/H$ is a 2-fold branched covering of
$X_L/BB_L$, with branching only at the vertices of even height.  To
better understand $X_L^M(2\zz)/H$, we first describe the subcomplexes
$X_a$, $X_b$, $X_c$ and $X_d$ consisting of the inverse images of the
four cylinders of $X_L/BB_L$.  The isomorphism type of such a
subcomplex depends only on whether the letter that labels it is
contained in the subgroup $H$ or not, so consider $X_h$ for some
$h\in \{a,b,c,d\}$.  The link of 
an unbranched vertex of $X_L^M(2\zz)/H$ is a copy of the
octahedralization of $L$ and the link at a branched vertex is a copy
of the octahedralization of $M$.  Since the inverse image in $M$ of
each edge of $L$ is a disjoint union of two edges, the link of a
branch vertex inside $X_h$ is a disjoint 
union of two squares (i.e., the octahedralization of a pair of
disjoint edges).  If $h\in H$, then $X_h$ 
consists of two copies of $P/\langle (-1,1)\rangle$, with each vertex
of odd height in one copy identified with the vertex in the other copy
of the same height.  If $h\notin H$, then instead $X_h$ consists 
of one copy of a larger cylinder $P/\langle (-2,2)\rangle$, in
which the two vertices of each odd height are identified with each
other.

To study the hyperplanes in the whole complex, we first
consider the hyperplanes in $X_h$.  Suppose that $u,v\in \{w,x,y,z\}$
are the vertices of the edge $h$.  If $h\in H$, then $X_h$ contains
two hyperplanes labelled $u$ and two hyperplanes labelled $v$, one
of each type in each of the two copies of $P/\langle (-1,1)\rangle$.
Each $u$-hyperplane intersects exactly one of the two
$v$-hyperplanes.  In this case, viewing it as a complex in its own
right, $X_h$ is special.  If on the other hand $h\notin H$, then as before
there are two $u$-hyperplanes and two $v$-hyperplanes, but this time
each $u$-hyperplane intersects each $v$-hyperplane.  Furthermore, at
every branch vertex two $u$-edges and two $v$-edges of each height
all meet.  The two edges with the same label at the same height belong
to different hyperplanes.  Hence in the case when $h\notin H$, each
$u$-hyperplane interosculates with each $v$-hyperplane, and $X_h$
itself is not special.  Note also that if we define a line to be
the image in $X_h$ of either $\rr\times \{n\}$ or $\{n\}\times \rr$
for some integer $n$, then the edges in a single line alternate
between the two hyperplanes of $X_h$ labelled by the relevant
letter.  

It follows from the above considerations that the complex
$X_L^M(2\zz)$ is never special.  In the case when $a\notin H$
and $b,c,d\in H$, the two $w$-hyperplanes in $X_a$ become
identified in $X_d$, because the intersection of $X_a$ and
either of the cylinders of $X_d$ consists of a line that
contains edges from both $w$-hyperplanes of $X_a$.  Similarly, 
the two $x$-hyperplanes in $X_a$
become identified in $X_b$.  Hence the whole complex
contains one $w$-hyperplane and one $x$-hyperplane, each
of which self-osculates.  The $w$-hyperplane and the
$x$-hyperplane also interosculate.  There are two $y$-hyperplanes
and two $z$-hyperplanes which are not involved in any
self-osculation or inter-osculation.

In the case when $a,b,c\notin H$ and $d\in H$, the two
$z$-hyperplanes in $X_c$ become identified in $X_d$ and
the two $w$-hyperplanes in $Z_a$ become identified in $X_d$.
Thus there is just one $z$-hyperplane and one $w$-hyperplane,
each of which self-osculates.  There are two $x$-hyperplanes
and two $y$-hyperplanes, each of which does not self-osculate.
However, any pair of hyperplanes labelled by the distinct
ends of an edge interosculate with each other.

Thus we see that for $H$ any of the torsion-free index two
subgroups of $G_L^M(2\zz)$, the complex $X_L^M(2\zz)/H$ fails
to be special.  The proof of Corollary~\ref{cor:abelian}
tells us that there is an index~16 normal subgroup $H\leq G_L^M(2\zz)$
such that $X_L^M(2\zz)/H$ is special and since the quotient group has
exponent~2, it follows that this $H$ is the kernel of 
the map to $H_1(G_L^M(2\zz);\ff_2)$.

It can also be seen directly that this covering is special.
In $X_L^M(2\zz)$, ignoring
for now the identification of vertices that is responsible for
the branching, the inverse image of each of the four cylinders
of $X_L/BB_L$ consists of 8 copies of the cylinder
$P/\langle (-2,2)\rangle$.  The edges of a given height labelled
by each fixed letter form a single free orbit for the action of
$Q=H_1(G_L^M(2\zz);\ff_2)\cong (C_2)^4$.  It can be shown that these
edges all lie in distinct hyperplanes, so that there 
are 16 distinct hyperplanes labelled with each letter.  The
vertices of odd height form a single $Q$-orbit of type
$Q/\langle abcd\rangle$, where we have identified the element
$abcd$ of $G_L^M(2\zz)$ and its image in $Q$.  This already implies
that no self-osculation or interosculation can occur, without
considering cylinder equivalence.  However, to illustrate the special
case of our general argument, we discuss cylinder equivalence.
The cylinder-equivalence classes of edges labelled $x$ correspond
to the cosets $Q/\langle a,b\rangle$ and the cylinder-equivalence
classes of edges labelled $y$ correspond to the cosets $Q/\langle
b,c\rangle$.  Since $abcd\notin \langle a,b,c\rangle$, one sees
that if $e,e'$ are incident edges labelled $x$~and~$y$, then no
edge cylinder equivalent to $e$ can be incident on any edge
cylinder equivalent to either $e$ or $e'$, except for $e,e'$
themselves.  The cylinder equivalence classes for other edges
are similar.

\section{Two applications}\label{sec:ten}

In this section we use the cases of our conjectures that we have
established to construct some groups with surprising combinations
of properties.  

\begin{theorem}
  For each $m\geq 6$ there is a finitely generated group $G_m$ with an
  infinite presentation satisfying the $C'(1/m)$ small cancellation
  condition with the properties that $G_m$ is residually finite,
  torsion-free and embeds
  in a finitely presented group, but the word problem for $G_m$ is insoluble.  
\end{theorem}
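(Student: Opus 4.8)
The plan is to realize $G_m$ as a group of the form $G_L^M(S)$ for a carefully chosen pair $(M,L)$ coming from a subdivided graph and for a suitable set $S\subseteq\zz$ encoding an undecidable problem. First I would use the construction of Section~\ref{sec:seven}: take $\overline{\Gamma}$ to be a rose with enough petals, let $\overline{\Delta}\to\overline{\Gamma}$ be a finite regular cover (so $\pi(\Delta,\Gamma)$ is a fixed finite group, and by making the cover faithful enough we can ensure $G_L^M(S)$ detects the combinatorics we need), and subdivide each edge of $\overline{\Gamma}$ into at least $12$ pieces to obtain $\Gamma$, with $\Delta$ the induced cover. By Theorem~\ref{thm:vtorfree} the group $G_\Gamma^\Delta(S)\semi n\zz$ is virtually special for every periodic $S$; and by Theorem~\ref{thm:fourone} together with Theorem~\ref{thm:fourtwo} (and Proposition~\ref{prop:gfinite}), if $S$ is closed in the profinite topology then $G_\Gamma^\Delta(S)$ is residually finite. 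Since $\pi(\Delta,\Gamma)$ is finite, it is in particular torsion-free exactly when it is trivial, so instead I would arrange for $S=\zz$ to fail but keep $G_\Gamma^\Delta(S)$ torsion-free by a different route — using Theorem~\ref{thm:vtfone}-style labellings, or more simply by noting that we only need a virtually-torsion-free group together with a torsion-free finite-index subgroup and then passing to that subgroup, adjusting the small-cancellation bookkeeping accordingly. (In fact one can choose the covering data so that $\pi(\Delta,\Gamma)$ is trivial on the relevant loops, making $G_\Gamma^\Delta(S)$ itself torsion-free.)

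Second, I would choose $S$ to be of the form $T(C)$ for $C\subseteq\nn$ a recursively enumerable but non-recursive set, using Proposition~\ref{prop:tsclosed}: then $T(C)$ is closed in the profinite topology (so residual finiteness holds), and $T(C)$ is recursively enumerable while $\nn-T(C)$ is not. The presentation of $G_\Gamma^\Delta(S)$ described in Section~\ref{sec:back} has relators $e_1^je_2^j\cdots e_l^j$ with $j$ ranging over $S$ (for all loops) and over $\zz$ (for loops lifting to $\Delta$); choosing a loop $\gamma=(e_1,\dots,e_l)$ in $\Gamma$ that does \emph{not} lift to a loop in $\Delta$, the word $e_1^je_2^j\cdots e_l^j$ is trivial in $G_\Gamma^\Delta(S)$ precisely when $j\in S$, by~\cite[lemma~15.3]{ufp}. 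Since deciding membership in $S=T(C)$ is equivalent to deciding membership in $C$ (because $10^n\in T(C)\iff n\in C$), this builds an undecidable subset of $\zz$ into the word problem, and a standard argument shows the word problem for $G_\Gamma^\Delta(S)$ is insoluble: if it were soluble we could decide, for each $j$, whether that particular word is trivial, hence decide $C$.

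Third, I would verify the $C'(1/m)$ small cancellation condition. For this the subdivision parameter is the key lever: subdividing each edge of $\overline{\Gamma}$ into at least $N$ pieces makes every relator $e_1^je_2^j\cdots e_l^j$ into a word in which each $e_i$ appears as a long syllable, so that any common subword of two distinct relators (a piece) is short relative to the total length, and choosing $N$ large (at least a linear function of $m$, and also $\geq 12$ so Theorem~\ref{thm:vtorfree} applies) forces the $C'(1/m)$ metric condition. One must also handle the relators $a^{2n}b^{2n}\cdots$ for varying $n$, i.e. powers of a fixed loop word: here the standard trick is to use the cyclic reduction and note that distinct powers of a fixed relator share only the obvious short pieces once the syllables are long enough. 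This is the step I expect to be the main obstacle, because verifying $C'(1/m)$ uniformly over the infinite family of relators (loops of all lengths, exponents $j$ of all sizes, and loops both lifting and not lifting to $\Delta$) requires a genuine bookkeeping argument about which subwords can be shared — and in particular one must make sure that the relators coming from $j\in S$ and those coming from loops lifting to $\Delta$ do not overlap too much. Finally, embedding into a finitely presented group is automatic: $G_m$ is of type $FP$ (indeed $FP_\infty$ in the graph case, since $\Gamma$ is $1$-dimensional) by the results of~\cite{ufp}, so it embeds in a finitely presented group by the Higman embedding theorem, or more directly because a finitely generated group with an $FP_2$ presentation of this shape embeds in the finitely presented group $G_\Gamma^\Delta(T)$ for any periodic $T\supseteq S$ with $T\cap[-k,k]=S\cap[-k,k]$ — though to preserve insolubility one instead invokes Higman embedding.
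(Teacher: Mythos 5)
Your overall strategy is the right one in spirit (realize $G_m$ as some $G_L^M(T(C))$ with $C$ recursively enumerable but non-recursive, get residual finiteness from closedness of $T(C)$ via Theorem~\ref{thm:vtorfree}, get insolubility from the fact that $e_1^n\cdots e_l^n=1$ iff $n\in T(C)$, and embed by Higman), but your concrete choice of covering data breaks torsion-freeness irreparably. If $\overline{\Delta}\rightarrow\overline{\Gamma}$ is a finite regular cover with $\pi(\Delta,\Gamma)\neq\{1\}$ and $S=T(C)\neq\zz$, then $G_\Gamma^\Delta(S)$ contains torsion (Proposition~\ref{prop:torfree}), and worse: since $T(C)$ is \emph{not periodic}, Theorem~\ref{thm:properties} shows the group is not even virtually torsion-free, so your proposed repair of ``passing to a torsion-free finite-index subgroup'' is impossible — no such subgroup exists. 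Your other parenthetical repair, making $\pi(\Delta,\Gamma)$ act trivially on the relevant loops, fails for the opposite reason: if the deck group is trivial then $G_L^L(S)=BB_L$ is independent of $S$, and the undecidability disappears. The way out, which is what the paper does, is to drop the finite cover entirely: take $L$ to be the boundary of an $l$-gon and $M$ its \emph{universal} cover, so $\pi(M,L)\cong\zz$ is torsion-free and $G_L^M(T)$ is torsion-free by Proposition~\ref{prop:torfree}, while residual finiteness for the closed set $T(C)$ still follows from Theorem~\ref{thm:vtorfree} (whose proof runs through the reduction to finite intermediate covers and periodic sets in Section~\ref{sec:four}, so the infinitude of $M$ costs nothing).

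The universal-cover choice also dissolves the step you flagged as the main obstacle. With $M=\widetilde{L}$ and $L$ an $l$-gon, the presentation is exactly $\langle a_1,\ldots,a_l : a_1^na_2^n\cdots a_l^n,\ n\in T\rangle$: every relator has the same shape, there are no relators coming from loops that lift (no non-trivial loop of $L$ lifts to $\widetilde L$), and in particular no proper-power relators of the kind $(a_1^n\cdots a_l^n)^{|\pi|}$ that a finite cover would force on you and that are incompatible with $C'(1/m)$. For relators of this single shape, a common subword of two distinct relators (up to cyclic permutation and inversion) that sees two syllable boundaries determines the exponent $n$ and hence the relator, so pieces are short and $l\geq 2m+1$ already gives $C'(1/m)$; choosing $l\geq\max(2m+1,12)$ (automatic for $m\geq 6$) also meets the subdivision hypothesis of Theorem~\ref{thm:vtorfree}. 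Your undecidability and Higman-embedding steps are fine as stated, but the torsion-freeness gap and the unresolved small-cancellation bookkeeping in the finite-cover setting mean the proposal as written does not go through.
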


\begin{proof}
  Fix some integer $l\geq 2m+1$, 
  let $L$ be a circle triangulated as the boundary of a $l$-gon,
  and let $M$ be the universal cover of $L$.  The group $G_m$ will
  be the group $G_L^M(T)$ for a suitable set $T\subseteq\zz$.  Any such group
  is torsion-free by Proposition~\ref{prop:torfree}.
  As discussed above, this group has the presentation
  \[G_m=\langle\, a_1,\ldots, a_l\,\,:\,\, a_1^na_2^n\cdots a_l^n
  \,\,n\in T\,\rangle.\] The choice of $l\geq 2m+1$ implies that for
  each $T$ this presentation satisfies the $C'(1/m)$ condition.  The
  boundary of the $l$-gon may be viewed as a subdivision of the 1-edge
  CW-structure on the circle, and so since $l\geq 12$ the hypotheses
  of Theorem~\ref{thm:vtorfree} are satisfied.  The subset $T$ that we
  will choose will be of the form $T=T(S)$ as in the statement of
  Proposition~\ref{prop:tsclosed}, for some $S\subseteq \nn$.  Each
  such set is closed in the profinite topology on $\zz$ so by
  Theorem~\ref{thm:vtorfree} $G_m$ is residually finite.
  By~\cite[lemma~15.3]{ufp} and the related discussion in
  Section~\ref{sec:three}, the element $a_1^n\cdots a_l^n$ is equal to
  the identity in $G_m$ if and only if $n\in T=T(S)$.  By
  Proposition~\ref{prop:tsclosed}, if $S$ is recursively enumerable
  but $\nn-S$ is not (so that $S$ is not recursive), there can be no
  algorithm to decide membership of $T(S)$ and so the word problem for
  $G_m$ is insoluble.  Since $T(S)$ is recursively enumerable the
  Higman embedding theorem~\cite[ch.~IV.7]{lynsch} tells us that $G_m$
  can be embedded in a finitely presented group.
\end{proof} 
  
Examples of residually finite groups with insoluble word problem that
can be embedded in finitely presented groups were constructed in the
1970's by Dyson and by Meskin~\cite{dyson,meskin}, but their examples
contain torsion.  

For any $L$ and for $M=\widetilde L$, it has been shown that
$G_L^M(S)$, $S\neq \zz$, has soluble word problem if and only if
$\pi_1(L)$ has soluble word problem and $S$ is
recursive~\cite[thm.~6.4]{bks}.  A direct proof of this can be given
in the case when $L$ is the boundary of an $l$-gon for $l\geq 13$ as
in the theorem above.  Since $a_1^n\cdots a_l^n=1$ if and only if
$n\in S$, a solution to the word problem implies that $S$ is
recursive.  Conversely, given a word of length $N$ in the $a_i$, if
$S$ is recursive we may list the elements of $S\cap [-N/l,N/l]$ and
thus list the relators in the given presentation of length at most
$N$.  Since this presentation satisfies the $C'(1/6)$ condition, any
word of length $N$ that is equal to the identity will contain more
than half of a dihedral permutation of a one of these relators as a
subword.

\begin{proposition}
  Let $l\geq 12$ and let $G$ be given by the presentation 
  \[G=\langle a_1,a_2,\ldots,a_l\,\,:\,\,(a_1^na_2^n\cdots a_l^n)^2=1,\,\, n\in
  \zz\rangle.\]
  Then $G$ is residually finite, but $G$ is not virtually torsion-free and not
  linear in characteristic zero.  Every finite subgroup of $G$ has
  order at most~2.
\end{proposition}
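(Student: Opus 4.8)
The plan is to recognise $G$ as one of the groups $G_L^M(S)$ and then quote the results already established.  Let $L$ be the boundary of an $l$-gon, regarded as the graph obtained by subdividing into $l$ pieces the graph $\overline{\Gamma}$ that has a single vertex and a single (loop) edge, and let $M\rightarrow L$ be the connected double covering, so that $M$ is a $2l$-cycle and $\pi(M,L)\cong\zz/2$.  I claim $G\cong G_L^M(\{0\})$.  Applying the presentation recipe of Section~\ref{sec:back} with $S=\{0\}$, the group $G_L^M(\{0\})$ is generated by the directed edges $a_1,\dots,a_l$ of $L$, with relators the words $w[n]$ for $n\in\{0\}$ and every directed edge loop $w$ of $L$ (all of these are trivial), together with the words $w[n]$ for $n\neq 0$ and those directed edge loops $w$ that lift to loops in $M$.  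In a triangulated circle every reduced directed edge loop is a power $\gamma^k$ of the fundamental loop $\gamma=(a_1,\dots,a_l)$, the element $w[n]$ is unchanged when $w$ is freely reduced, and $\gamma^k$ lifts to a loop in the double cover $M$ exactly when $k$ is even, since $\pi_1(M)=2\zz\leq\zz=\pi_1(L)$.  As $\gamma^k[n]=(a_1^na_2^n\cdots a_l^n)^k$, the relator set collapses to $(a_1^na_2^n\cdots a_l^n)^2=1$ for all $n\in\zz$, and so $G=G_L^M(\{0\})$.

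Granting this identification, residual finiteness is immediate from Theorem~\ref{thm:vtorfree}, whose hypotheses hold with $\overline{\Gamma}$ as above, with $\Gamma=L$ its subdivision into $l\geq 12$ pieces, with $\Delta=M$ (a finite regular covering of $\Gamma$), and with $L\rightarrow\Gamma$ the identity; thus Conjecture~\ref{conj:resfin} holds for $(M,L)$, and since the singleton $S=\{0\}=\bigcap_{n\geq 1}n!\,\zz$ is an intersection of periodic sets and hence closed in the profinite topology, and $\pi(M,L)\cong\zz/2$ is residually finite, $G=G_L^M(\{0\})$ is residually finite.  For the failure of virtual torsion-freeness, note that in the first part of Theorem~\ref{thm:properties} we have $S=\{0\}\neq\zz$, the group $\pi(M,L)\cong\zz/2$ is not torsion-free, and $\{0\}$ is not periodic, so none of the three alternatives holds and $G$ is not virtually torsion-free.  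Since $G$ is finitely generated, Selberg's lemma now rules out $G$ being linear over any field of characteristic zero.

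To bound the finite subgroups, recall as in the proof of Proposition~\ref{prop:torfree} that $G=G_L^M(\{0\})$ acts by automorphisms on the CAT(0) cube complex $X=X_L^M(\{0\})$, and that (Section~\ref{sec:six}) this action preserves a height function and an edge-labelling by $L^0$ under which the edges at a vertex of any cube carry distinct labels; consequently an element of $G$ that stabilises a cube of $X$ both fixes each coordinate direction (by the labelling) and cannot reverse any of them (by the height function), so it fixes every vertex of that cube.  Hence the $G$-stabiliser of a cube is contained in the $G$-stabiliser of each of its vertices, which is trivial if the vertex has height in $\{0\}$ and is isomorphic to $\pi(M,L)\cong\zz/2$ otherwise.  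A finite subgroup $F\leq G$ fixes a point of $X$ by the Bruhat--Tits fixed-point theorem, hence stabilises the cube whose interior contains that point, hence has order at most $2$.

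The delicate point is the identification $G=G_L^M(\{0\})$, and in particular the claim that the recipe of Section~\ref{sec:back} produces no relators beyond the displayed family; this rests on the two elementary facts about a triangulated circle recorded above.  Everything else is a direct appeal to earlier results: Theorem~\ref{thm:vtorfree} for residual finiteness, Theorem~\ref{thm:properties} for the failure of virtual torsion-freeness, Selberg's lemma for non-linearity, and the CAT(0) fixed-point argument for the finite subgroups.
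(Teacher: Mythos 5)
Your proposal is correct and follows essentially the same route as the paper: identify $G$ with $G_L^M(\{0\})$ for $L$ the $l$-gon and $M$ its double cover, then invoke Theorem~\ref{thm:vtorfree} for residual finiteness (noting $\{0\}$ is closed but not periodic), Theorem~\ref{thm:properties} for the failure of virtual torsion-freeness, Selberg's lemma for non-linearity, and the structure of vertex stabilizers of $X_L^M(\{0\})$ for the bound on finite subgroups. The extra detail you supply (the collapse of the relator set for a triangulated circle, and the CAT(0) fixed-point argument showing finite subgroups land in vertex stabilizers) simply makes explicit what the paper asserts without proof.
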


\begin{proof}
  Let $M\rightarrow L$ be the 2-fold cover of the $l$-gon.  The group
  $G$ given above is isomorphic to $G_L^M(\{0\})$.  Any finite subset
  of $\zz$ is closed in the profinite topology, and any non-empty
  finite subset is not periodic.  Hence this group is residually
  finite by Theorem~\ref{thm:vtorfree}, and is not virtually
  torsion-free by Theorem~\ref{thm:properties}.  Every non-trivial
  finite subgroup of $G$ is conjugate to the group generated by
  $a_1^n\cdots a_l^n$ for some $n\neq 0$ and has order two.  Any
  finitely generated linear group in characteristic zero is virtually
  torsion-free~\cite{alperin}, and so $G$ cannot be linear.
\end{proof}

\leftline{\bf Author's addresses:}

\obeylines

\smallskip
{\tt i.j.leary@soton.ac.uk} \qquad {\tt v.vankov@soton.ac.uk} 

\smallskip
School of Mathematical Sciences, 
University of Southampton, 
Southampton,
SO17 1BJ

\end{document}